\numberwithin{equation}{section}
\theoremstyle{plain}
\newtheorem{theorem}{Theorem}[section]
\newtheorem{corollary}[theorem]{Corollary}
\newtheorem{proposition}[theorem]{Proposition}
\newtheorem{lemma}[theorem]{Lemma}
\theoremstyle{remark}
\theoremstyle{definition}
\newtheorem{definition}[theorem]{Definition}
\newcommand{\R}{\mathbb{R}}
\newcommand{\C}{\mathbb{C}}
\newcommand{\Q}{\mathbb{Q}}
\newcommand{\Z}{\mathbb{Z}}
\newcommand{\N}{\mathbb{N}}
\newcommand{\Wedge}{\textstyle\bigwedge}
\renewcommand{\geq}{\geqslant}
\renewcommand{\leq}{\leqslant}
\DeclareMathOperator*{\essinf}{ess\,inf}
\begin{document}

\title{On dissonance of self-conformal measures in $\R^d$}

\author{Aleksi Py\"or\"al\"a}
\address[Aleksi Py\"or\"al\"a]
       {Department of Mathematics and Statistics \\ 
        P.O. Box 35 (MAD) \\ 
        FI-40014 University of Jyväskylä \\ 
        Finland}
\email{aleksi.pyorala@gmail.com}

\thanks{This research was supported by the Research Council of Finland via the project \emph{Approximate incidence geometry}, grant no. 355453.}
\subjclass[2020]{Primary 28A80; Secondary 37A10}
\keywords{Convolutions, dissonance, Hausdorff dimension, scenery flow, self-conformal measures}
\date{\today}

\begin{abstract}
Let $\mu$ be a self-conformal measure on $\R^d$. In this note we establish conditions for $\mu$ under which 
\begin{equation}\label{eq-dissonance}
    \dim(\mu*\nu) = \min\lbrace d,\dim\mu+\dim\nu\rbrace
\end{equation}
holds when $\nu$ is any Ahlfors-regular or self-conformal measure on $\R^d$. Our main result states that \eqref{eq-dissonance} holds if $\mu$ is totally non-linear and not supported on a smooth hypersurface. We also establish (likely non-sharp) algebraic conditions for $\mu$ under which \eqref{eq-dissonance} holds without assuming non-linearity. The proofs combine recent results on scaling sceneries of self-conformal measures with a Marstrand-type projection theorem for product sets due to López and Moreira.
\end{abstract}

\maketitle

\section{Introduction}

Two exact-dimensional Borel probability measures $\mu$ and $\nu$ on $\R^d$ are said to \emph{dissonate} if 
\begin{equation}\label{eq-dissonancedefinition}
    \dim(\mu*\nu) = \min\lbrace d,\dim\mu+\dim\nu\rbrace.
\end{equation}
Here and throughout, we use $\dim$ to denote the Hausdorff dimension of both sets and measures: 
\begin{align*}
    \dim \mu = \essinf_{x\sim \mu} \liminf_{r\to 0}\frac{\log \mu(B(x,r))}{\log r} = \inf \lbrace \dim A:\ \mu(A) > 0\rbrace
\end{align*}
for any Borel measure $\mu$. If \eqref{eq-dissonancedefinition} fails, $\mu$ and $\nu$ are said to \emph{resonate}. For example, any exact-dimensional measures $\mu$ and $\nu$ dissonate after randomly scaling and rotating either of them, as was shown by López and Moreira \cite[Theorem 2.3]{LopezMoreira2015}. Because of this, resonance between measures is in some sense an exceptional phenomenon, and might be interpreted as evidence of common arithmetic structure shared by both measures.

The absence of arithmetic structure in self-conformal measures has been an active research topic in the past years, not only in the form of their dissonance but also in the rate of their Fourier decay, $L^2$-flattening behavior and the prevalence of normal numbers in their support, see \cite{AlgomChangWuWu2025, AlgomKhalil2025, AlgomRHWang2023, AlgomRHWang2024, RossiShmerkin2020, BakerBanaji2025, BakerSahlsten2023, AlgomOrponen2025} for a non-exhaustive list of recent advances in these connected topics. To very shortly illustrate the connection to dissonance, note that Salem measures, i.e. measures with fastest possible Fourier decay, dissonate with \emph{every} exact-dimensional Borel measure, and the dimension of iterated convolutions of $L^2$-flattening measures on $\R^d$ approaches $d$. It is believed that in general, a self-conformal measure should be ``highly non-arithmetic'' in every such aspect, at least in the presence of suitable non-linearity: Their Fourier transforms should have fast decay, they should be $L^2$-flattening and their supports should contain many normal numbers. In particular, self-conformal measures should dissonate, possibly with \emph{any other} exact-dimensional measure.

In this note we study dissonance between probability measures $\mu$ and $\nu$ on $\R^d$ when one of them is self-conformal. We will next recall the definition of self-conformal measures.

Let $\Gamma$ be a finite set with $\#\Gamma\geq 2$ and let $\Phi = \lbrace f_i\rbrace_{i\in\Gamma}$ be an iterated function system of contractive conformal maps on $(-1,1)^d$, see Section 2 for the definitions. A Borel probability measure $\mu$ on $(-1,1)^d$ is called \emph{self-conformal} if there exist numbers $0<p_i<1$ such that $\sum_{i\in\Gamma} p_i = 1$ and
\begin{equation}\label{eq-selfconformalmeasure}
    \mu = \sum_{i\in\Gamma} p_i \cdot f_i\mu,
\end{equation}
where we write $f\mu := \mu\circ f^{-1}$. A conformal iterated function system $\Phi$ is \emph{totally non-linear} if for every $g\in\mathcal C_{\rm conf}((-1,1)^d)$ there exists $f \in \Phi$ such that $g \circ f \circ g^{-1}$ is not an affine function. 

In general, a self-conformal measure need not dissonate even with itself. However, the only known examples of resonating self-conformal measures arise from iterated function systems where the set of \emph{rotations} $\lbrace Df(x)\Vert Df(x)\Vert^{-1}:\ f\in \Phi, x\in (-1,1)^d\rbrace$ is contained in a strict subset of $SO(d)$. Prohibiting such a restriction, for example by assuming that $\Phi$ is totally non-linear and does not preserve a hypersurface of $\R^d$, it seems at least possible that self-conformal measures associated to $\Phi$ would dissonate with any other exact-dimensional measure. While dissonance with arbitrary measures is far beyond the scope of the present work, we manage to establish that many self-conformal measures dissonate with a class of measures which we call \emph{tangent-regular}, a class which contains all Ahlfors-regular and self-conformal probability measures. 
\begin{definition}
    A Borel probability measure $\nu$ on $\R^d$ is \emph{tangent-regular} if the following holds for $\nu$-almost every $x\in \R^d$: If $P$ is any tangent distribution of $\nu$ at $x$, then $\dim \eta = \dim \nu$ for $P$-almost every $\eta$.  
\end{definition}
For the definition of tangent distributions, see Section 2. For example, Ahlfors-regular and self-conformal measures are tangent-regular. See Lemma \ref{lemma-tangentregular} for a proof of the latter claim. 

\begin{theorem}\label{thm-1}
    Let $\Phi$ be a totally non-linear conformal iterated function system on $\R^d$, and let $\mu$ be a self-conformal measure associated to $\Phi$. Suppose that either 
    \begin{enumerate}
        \item $d=1$ and $\Phi$ is real-analytic, 
        \item $d=2$ and $\mu$ is not supported on an analytic curve, or
        \item $d\geq3$ and $\mu$ is not supported on an affine hyperplane or a sphere.
    \end{enumerate}
    Then for any tangent-regular measure $\nu$ on $\R^d$, 
    \begin{equation*}
        \dim(\mu*\nu) = \min\lbrace d, \dim\mu+\dim\nu\rbrace.
    \end{equation*}
\end{theorem}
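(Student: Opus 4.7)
The plan is to combine a local-to-global scenery reduction for convolutions with the López--Moreira Marstrand-type projection theorem for product measures, referenced in the introduction. The argument proceeds in three steps.

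\emph{Reduction to tangent measures.} Writing $\mu * \nu$ as the pushforward of $\mu \times \nu$ under $(x,y) \mapsto x+y$ and running a Hochman-type local entropy averaging argument along the scenery flow, I would reduce the lower bound $\dim(\mu * \nu) \geq \min\{d, \dim\mu + \dim\nu\}$ to showing that, for $\mu$-typical $x$, $\nu$-typical $y$, and $P_x \times Q_y$-typical pairs of tangent measures $(\eta, \zeta)$ of $\mu$ and $\nu$, the convolution $\eta * \zeta$ achieves dimension $\min\{d, \dim\mu + \dim\nu\}$. The matching upper bound $\dim(\mu * \nu) \leq \dim\mu + \dim\nu$ is standard for exact-dimensional measures.

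\emph{Structure of scaling sceneries.} Tangent-regularity of $\nu$ gives $\dim \zeta = \dim\nu$ for $Q_y$-typical $\zeta$. For $\mu$, I would invoke the classification of scaling sceneries of totally non-linear self-conformal measures; combined with the non-degeneracy in hypotheses (1)--(3), this yields that $P_x$-typical tangents are rotated copies of a fixed measure $\eta_0$ of the same dimension as $\mu$, and that the distribution of rotations appearing has full support on $O(d)$, or at least is absolutely continuous with respect to the Haar measure. Hypotheses (2) and (3) enter here precisely to rule out scenery distributions whose rotations are trapped in proper closed subgroups of $O(d)$ that stabilize a hypersurface carrying $\mu$, since affine hyperplanes and spheres are the only such conformally invariant hypersurfaces. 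For $d=1$, where $O(1)$ is finite and this route is unavailable, the analogous richness statement is that the scenery distribution has a non-atomic scaling component, which would follow from the real-analyticity in hypothesis (1) via Hochman-type arithmetic-richness theorems.

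\emph{Application of López--Moreira.} With $\eta = \rho\eta_0$ for a Haar-rich distribution of rotations $\rho$ and $\dim\zeta = \dim\nu$, the López--Moreira theorem directly yields $\dim(\rho\eta_0 * \zeta) = \min\{d, \dim\eta_0 + \dim\zeta\} = \min\{d, \dim\mu + \dim\nu\}$ for Haar-a.e.\ $\rho \in O(d)$, and hence for $P_x$-typical $\eta$. Feeding this into Step 1 concludes the proof.

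The principal obstacle lies in Step 2: rigorously establishing, under the weakest non-degeneracy assumption (case (3), which only excludes affine hyperplanes and spheres), that the tangent-rotation distribution of $\mu$ is Haar-rich on $O(d)$. This will require combining the existing scenery-flow results for self-conformal measures with a geometric argument excluding every closed subgroup of $O(d)$ that preserves a variety supporting $\mu$; the Liouville rigidity of conformal maps in $d \geq 3$ should be the key ingredient making this exclusion tractable. Secondary technical points are the precise form of the local entropy averaging in Step 1 for general tangent-regular targets $\nu$, and the case $d=1$, which needs a separate treatment independent of rotations.
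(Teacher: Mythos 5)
Your overall architecture (scenery flow plus the L\'opez--Moreira projection theorem) matches the paper's, but Step 2 and the hand-off to Step 3 contain a genuine gap. First, the structure theorem for scaling sceneries of self-conformal measures does not say that typical tangents are rotated copies of a single fixed measure $\eta_0$ with a rotation distribution that is Haar-rich or absolutely continuous; it says tangent measures have the form $(\gamma\cdot\mu)_{B(0,1)}$ for a distribution $\gamma$ on conformal maps, and absolute continuity of any induced rotation distribution is far stronger than anything the machinery delivers. What the paper actually proves (Proposition \ref{prop-tangentdistributioninvariant} combined with Proposition \ref{prop-keyprop}) is an \emph{invariance} statement: each ergodic component $P$ of a tangent distribution of the product $\mu\times\nu$ satisfies $S_{t}(L,I)P=P$ for all $L$ in a \emph{dense} subset of $CO(d)$, the density coming from the fact that the derivative ratios $Df_h(x)Df_h(y)^{-1}$, $x,y\in\spt\mu$, generate all of $CO(d)$ when $f_h$ is non-affine and $\mu$ is not locally carried by a hypersurface (this is where the real-analyticity of $F(x,y)=L(\mathfrak{log}(O_f(x)O_f(y)^{-1}),\log\lambda_f(x)-\log\lambda_f(y))$ and Lemmas \ref{lemma-hypersurface}--\ref{lemma-selfconformalrigidity} enter).

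Second, even granting full support of a rotation distribution, you cannot feed it directly into L\'opez--Moreira: that theorem gives the dimension conclusion for Haar-\emph{almost every} rotation, and a merely dense (or full-support) family of rotations could in principle be contained in the null exceptional set. The paper closes this gap in Proposition \ref{prop-FDprojections} by combining the exact invariance $S_t(A,I)P=P$ for a dense set of $A$ with the lower semi-continuity of $A\mapsto\int\dim\pi(A\mu\times\nu)\,dP(\mu\times\nu)$ from Hochman--Shmerkin: semi-continuity transfers the almost-everywhere lower bound $\min\{d,\dim\mu+\dim\nu\}$ to the identity, and invariance guarantees the value at the dense set equals $\dim\pi P$ itself. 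Without either the invariance or the semi-continuity your Step 3 does not go through. Your instinct that case (3) is the hard point is right, but the resolution is not a classification of closed subgroups of $O(d)$ preserving a variety; it is the analytic non-degeneracy argument of Proposition \ref{prop-keyprop} showing the derivative cocycle generates $CO(d)$, together with the semi-continuity mechanism above.
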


One can also define dissonance between sets by replacing in \eqref{eq-dissonancedefinition} the convolution $\mu*\nu$ by the sumset $X+Y = \lbrace x+y:\ x\in X, Y\in Y\rbrace$ for any $X,Y\subset \R^d$. In particular, one may ask whether a \emph{self-conformal set} dissonates with a given Borel set in $\R^d$. Recall that given a conformal iterated function system $\Phi$, there exists a unique compact set $K$, called the attractor of $\Phi$, which satisfies
\begin{equation}\label{eq-selfconformalset}
    K = \bigcup_{i\in\Gamma} f_i(K).
\end{equation}
The set $K$ is also called the self-conformal set associated to $\Phi$ and for any self-conformal measure $\mu$ associated to $\Phi$ it holds that $K = {\rm spt}(\mu)$. The family $\Phi$ satisfies the \emph{strong separation condition} if $f_i(K)$ are disjoint for every $i\in \Gamma$. 

Since \cite[Proposition 2.5]{FraserPollicott2015} ensures that a self-conformal set $K$ always contains a self-conformal set $K'$ with strong separation condition and dimension arbitrarily close to that of $K$, and \cite[Lemma 2.1]{PeresRamsSimonSolomyak2001} ensures that $K'$ supports a self-conformal measure of dimension arbitrarily close to that of $K'$, Theorem \ref{thm-1} as well as the variants below imply the corresponding results for dissonance between sets. 

\begin{corollary}
    Let $\Phi$ be a totally non-linear conformal iterated function system on $\R^d$, and let $X$ denote the attractor of $\Phi$. Suppose that either 
    \begin{enumerate}
        \item $d=1$ and $\Phi$ is real-analytic, 
        \item $d=2$ and $X$ is not contained in an analytic curve, or
        \item $d\geq3$ and $X$ is not contained in an affine hyperplane or a sphere.
    \end{enumerate} 
    Then for any Ahlfors-regular or self-conformal set $Y\subset\R^d$,
    \begin{equation*}
        \dim(X+Y) = \min\lbrace d, \dim X+\dim Y\rbrace.
    \end{equation*}
\end{corollary}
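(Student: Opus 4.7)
The plan is to derive the set-level statement from the measure-level Theorem \ref{thm-1} by an approximation argument, using the two tools cited in the text preceding the statement. Fix $\varepsilon>0$. By \cite[Proposition 2.5]{FraserPollicott2015} applied to $\Phi$, I can find a subsystem $\Phi'$ --- whose elements are compositions of maps from $\Phi$ --- whose attractor $X'\subset X$ satisfies the strong separation condition and $\dim X' \ge \dim X - \varepsilon$. Then \cite[Lemma 2.1]{PeresRamsSimonSolomyak2001} produces a self-conformal measure $\mu$ associated to $\Phi'$ with $\dim \mu \ge \dim X' - \varepsilon$. For the approach to succeed I must also check that $\Phi'$ can be chosen to satisfy the same hypothesis (1), (2) or (3) as $\Phi$: real-analyticity in case (1) is automatic since compositions of real-analytic maps are real-analytic, and in cases (2)--(3) the non-containment hypothesis is immediate once $\dim X > d-1$, while in the small-dimension sub-case it must be arranged directly, by exploiting total non-linearity of $\Phi$ to keep $X'$ transverse to any putative analytic curve or sphere.

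Next I would produce a tangent-regular measure $\nu$ supported on $Y$ with $\dim \nu \ge \dim Y - 2\varepsilon$. If $Y$ is Ahlfors $s$-regular, take $\nu$ to be the normalized $s$-dimensional Hausdorff measure on $Y$, which is itself Ahlfors-regular and hence tangent-regular. If $Y$ is self-conformal, applying the same two-step approximation to the CIFS of $Y$ yields a self-conformal measure $\nu$ supported on a subset of $Y$ with $\dim \nu \ge \dim Y - 2\varepsilon$; such $\nu$ is tangent-regular by Lemma \ref{lemma-tangentregular}. Applying Theorem \ref{thm-1} to the pair $(\mu,\nu)$ now gives
\begin{equation*}
    \dim(\mu*\nu) \;=\; \min\{d,\,\dim\mu + \dim\nu\} \;\ge\; \min\{d,\,\dim X + \dim Y - 4\varepsilon\}.
\end{equation*}
Since $\spt(\mu*\nu)\subset\spt(\mu)+\spt(\nu)\subset X+Y$, the left-hand side is bounded above by $\dim(X+Y)$, and letting $\varepsilon\to 0$ produces the lower bound. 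The matching upper bound is standard, as $X+Y$ is a Lipschitz image of $X\times Y$ under addition, and both Ahlfors-regular and self-conformal sets have coinciding Hausdorff and upper box dimensions, so the product formula $\dim_H(X\times Y)\le \dim_H X + \dim_H Y$ applies.

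The main obstacle is the hypothesis-inheritance step, specifically that $\Phi'$ remains totally non-linear. Since $\Phi'$ typically consists of long compositions of generators rather than the generators themselves, one cannot simply pull the property back: the set of conformal $f$ with $g\circ f\circ g^{-1}$ affine is closed under composition, so a conjugation $g$ could a priori affinize every element of $\Phi'$ without affinizing individual members of $\Phi$. I would address this by enlarging the Fraser--Pollicott subsystem $\Phi'$ inside some $n$-fold composition $\Phi^{(n)}$ so that it contains enough distinct words (for example, cylinders of the form $f_i\circ f_{j}^{\,n-1}$ for every pair $i,j\in\Gamma$) that simultaneous affinization of $\Phi'$ by a single $g$ forces simultaneous affinization of every $f\in\Phi$, contradicting total non-linearity of $\Phi$. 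Once this combinatorial step is in place, all remaining steps in the argument are routine.
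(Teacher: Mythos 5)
Your proposal follows exactly the route the paper intends: pass to a Fraser--Pollicott subsystem with strong separation and almost full dimension, put a self-conformal (resp.\ Ahlfors-regular) measure of almost full dimension on each set, apply Theorem \ref{thm-1}, and let $\varepsilon\to0$; the upper bound via box dimension of the product is also the standard one. The paper disposes of the corollary in the one-paragraph remark preceding it, so in that sense you have reproduced its argument. What you add --- and what the paper silently elides --- is the observation that the subsystem $\Phi'$ must itself satisfy the hypotheses of Theorem \ref{thm-1}: total non-linearity and, in cases (2)--(3), non-containment of the smaller attractor $X'$ in an analytic curve, hyperplane or sphere. These are genuine issues, since both properties can a priori be lost when passing from $\Phi$ to a subsystem of $\Phi^{(n)}$.

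However, your repairs for these two points are not yet proofs. For the geometric hypothesis you only assert that in the sub-case $\dim X\le d-1$ the transversality of $X'$ to curves/spheres ``must be arranged directly''; no argument is given, and note that by Lemmas \ref{lemma-hypersurface_selfconformal} and \ref{lemma-selfconformalrigidity} it would suffice to rule out that $X'$ is \emph{locally} contained in a hypersurface, which is what actually enters Proposition \ref{prop-keyprop} --- but some argument is still required, e.g.\ choosing the subsystem so that its attractor contains a fixed finite configuration of points of $X$ not lying on any single sphere or hyperplane. For total non-linearity, your idea of forcing the subsystem to contain the words $f_i\circ f_j^{\,n-1}$ and $f_j^{\,n}$ (so that simultaneous affinization by $g$ yields $g f_i f_j^{-1}g^{-1}$ and $(gf_jg^{-1})^n$ affine, hence $gf_jg^{-1}$ and then $gf_ig^{-1}$ affine) is the right algebra, but ``enlarging the Fraser--Pollicott subsystem'' is in tension with the strong separation condition that was the whole point of invoking \cite[Proposition~2.5]{FraserPollicott2015}: the cylinders $f_if_j^{\,n-1}(K)$ need not be disjoint from each other or from the already-chosen cylinders, so you must either verify that the maximal disjoint family can be chosen to contain these words or replace them by nearby admissible words and redo the affinization argument. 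Until those two steps are carried out the deduction has the same gap as the paper's own remark, only now made explicit.
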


We believe the conclusion of Theorem \ref{thm-1} should hold with only the assumption that the totally non-linear self-conformal measure $\mu$ is not supported in an affine hyperplane. When $\mu$ and $\nu$ are self-conformal measures in the plane, the cases not covered by Theorem \ref{thm-1} are easy enough to handle separately that we obtain a sharper statement. 

\begin{theorem}\label{thm-2}
    Let $\Phi$ and $\Psi$ be totally non-linear conformal iterated function systems on $\R^2$, and let $\mu$ and $\nu$ be self-conformal measures associated to $\Phi$ and $\Psi$. Suppose that $\mu$ and $\nu$ are not supported on parallel lines. Then
    \begin{equation*}
        \dim(\mu*\nu) = \min\lbrace 2, \dim\mu+\dim\nu\rbrace.
    \end{equation*}
\end{theorem}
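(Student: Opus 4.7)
The plan is to split into two cases depending on whether both $\mu$ and $\nu$ are supported on analytic curves. If either $\mu$ or $\nu$ fails to be supported on an analytic curve, then by symmetry we may assume $\mu$ fails, and since the self-conformal $\nu$ is tangent-regular by Lemma \ref{lemma-tangentregular}, the conclusion follows immediately from Theorem \ref{thm-1}(2).

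The remaining case is that both $\mu$ and $\nu$ are supported on analytic curves $\gamma_1, \gamma_2 \subset \R^2$. Then $\dim\mu, \dim\nu \leq 1$, so $\dim\mu + \dim\nu \leq 2$, and I would aim for the matching lower bound on $\dim(\mu * \nu)$. After restricting to simple sub-arcs of positive mass and splitting the measures countably if necessary, I would parameterize each $\gamma_i$ by an analytic diffeomorphism $\Gamma_i : I_i \to \gamma_i$ and pull back, writing $\mu = (\Gamma_1)_*\tilde\mu$ and $\nu = (\Gamma_2)_*\tilde\nu$. Since the $\Gamma_i$ are bi-Lipschitz on their images, the factor measures $\tilde\mu, \tilde\nu$ are exact-dimensional with the same dimensions as $\mu, \nu$. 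Setting $F(s,t) = \Gamma_1(s) + \Gamma_2(t)$ we obtain $\mu * \nu = F_*(\tilde\mu \times \tilde\nu)$, and the Jacobian $|\Gamma_1'(s) \wedge \Gamma_2'(t)|$ vanishes precisely on the parallel locus $P = \{(s,t) : \Gamma_1'(s) \parallel \Gamma_2'(t)\}$.

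The heart of the argument is to show $(\tilde\mu \times \tilde\nu)(P) = 0$. The tangent direction map $s \mapsto \Gamma_i'(s)/\|\Gamma_i'(s)\|$ is analytic; it is constant precisely when $\gamma_i$ is a line segment, and otherwise its fibers are discrete. Since $\gamma_1, \gamma_2$ are not supported on parallel lines, for any direction $v$ at most one of the two tangent maps can be identically $\pm v$, so for each fixed $t$ the slice $\{s : \Gamma_1'(s) \parallel \Gamma_2'(t)\}$ is discrete whenever $\gamma_1$ is not the line in direction $\Gamma_2'(t)$ (and symmetrically with the roles of $s$ and $t$ reversed). Because self-conformal measures with at least two maps are non-atomic, such a discrete slice is $\tilde\mu$-null (resp.\ $\tilde\nu$-null), and Fubini gives $(\tilde\mu \times \tilde\nu)(P) = 0$. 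Consequently $F$ is a local diffeomorphism on a full $\tilde\mu \times \tilde\nu$-measure set, so $\dim(\mu * \nu) = \dim(\tilde\mu \times \tilde\nu) = \dim\tilde\mu + \dim\tilde\nu = \dim\mu + \dim\nu$, where the middle equality is the standard product formula for exact-dimensional measures.

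The main obstacle I anticipate is the null-set analysis of $P$ in the borderline configuration where exactly one of the curves is a line: there $P$ contains an entire strip of the form $\{s : \Gamma_1'(s) \parallel v\} \times I_2$ (or its transpose), and non-atomicity of the self-conformal factor must be invoked to conclude the strip is $(\tilde\mu \times \tilde\nu)$-null. The other ingredients, namely dimension preservation under local diffeomorphisms and the product formula for exact-dimensional measures, are standard.
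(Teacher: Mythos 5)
Your proposal is correct, and its overall architecture (reduce to Theorem \ref{thm-1} unless both measures are supported on analytic curves, then exploit non-parallelism of tangent directions) is the same as the paper's; the first case is handled identically. Where you genuinely diverge is in how the second case is closed out. The paper stays inside the scenery-flow framework: it observes that, by the identity principle for analytic functions and the standing assumption $\dim\mu\,\dim\nu>0$, at $\mu\times\nu$-a.e.\ $(x,y)$ every pair of tangent measures $(\mu',\nu')$ is supported on a pair of non-parallel lines, so that $\pi$ restricted to $\mathrm{spt}(\mu'\times\nu')$ is a rank-$2$ affine map and $\dim\pi(\mu'\times\nu')=\dim\mu'+\dim\nu'$; it then transfers this back to $\mu\times\nu$ via Theorem \ref{thm-hochmanprojections} together with tangent-regularity. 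You instead work directly on the curves: parametrize, note that $F(s,t)=\Gamma_1(s)+\Gamma_2(t)$ is a local diffeomorphism off the closed parallel locus $P$, and kill $P$ by exactly the same identity-principle-plus-positive-dimension mechanism (non-constant analytic direction maps have discrete fibers, and a measure of positive dimension annihilates countable sets --- note that ``non-atomicity'' is not quite the right thing to invoke, since a self-conformal measure can in principle be a point mass; what you actually need and have is $\dim\mu,\dim\nu>0$, which the paper assumes without loss of generality because the statement is trivial otherwise). Your route is more elementary, bypassing tangent distributions and the Hochman--Shmerkin projection theorem entirely in this case, at the cost of the bookkeeping of a countable decomposition into pieces on which $F$ is bi-Lipschitz (needed to upgrade ``local diffeomorphism off a null set'' to preservation of the dimension of the push-forward) and the product formula for exact-dimensional measures; both are standard, as you say. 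One subcase worth stating explicitly in a write-up: if both supports are (non-parallel) lines, then $P=\emptyset$ and $F$ is an affine isomorphism, so your argument covers it trivially.
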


The assumption that $\mu$ and $\nu$ are not supported on parallel lines is necessary. However, it would likely suffice to assume that only $\Phi$ is totally non-linear. 

\begin{corollary}\label{cor-2}
    Let $\Phi$ and $\Psi$ be totally non-linear conformal iterated function systems on $\R^2$, and let $X$ and $Y$ denote their attractors. Suppose that $X$ and $Y$ are not contained in parallel lines. Then
    \begin{equation*}
        \dim(X+Y) = \min\lbrace 2, \dim X+\dim Y\rbrace.
    \end{equation*}
\end{corollary}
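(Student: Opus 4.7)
The plan is to deduce the set-level statement from Theorem \ref{thm-2} by the same approximation mechanism the author flagged for the analogous corollary following Theorem \ref{thm-1}: I would approximate $X$ and $Y$ from inside by strongly separated self-conformal subsets supporting self-conformal measures of nearly full dimension, and then invoke the measure-level theorem.

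For each $\eps>0$, I would first apply \cite[Proposition 2.5]{FraserPollicott2015} to produce sub-IFS's $\Phi_\eps$ and $\Psi_\eps$, consisting of compositions of the maps of $\Phi$ and $\Psi$, satisfying the strong separation condition, with attractors $X_\eps\subset X$, $Y_\eps\subset Y$ of dimension at least $\dim X-\eps$ and $\dim Y-\eps$; then \cite[Lemma 2.1]{PeresRamsSimonSolomyak2001} furnishes self-conformal measures $\mu_\eps, \nu_\eps$ for these systems with $\dim\mu_\eps\geq\dim X-2\eps$ and $\dim\nu_\eps\geq\dim Y-2\eps$. As with the earlier corollary, one can arrange $\Phi_\eps$ and $\Psi_\eps$ to remain totally non-linear (the inputs in $\R^2$ are Möbius contractions, so if some Möbius $g$ conjugated every $n$-fold self-composition $f_i^n$ to a similarity, then, using that a loxodromic Möbius and its powers share their fixed points, $g f_i g^{-1}$ itself must fix $\infty$ and hence be a similarity, contradicting total non-linearity of $\Phi$). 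The point specific to the present statement is verifying that $\mu_\eps$ and $\nu_\eps$ are not supported on parallel lines, which I would argue by cases. If both $X$ and $Y$ lie on lines, the hypothesis forces those lines to be non-parallel and this property passes trivially to subsets. If, on the other hand, say $X$ is not contained in any line, then $X$ contains three non-collinear points, and the Fraser-Pollicott sub-system $\Phi_\eps$ can be selected so as to include cylinders whose images sit near each of these three points, ensuring that $X_\eps$ contains three non-collinear points and in particular is not contained in any line.

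With these ingredients in hand, Theorem \ref{thm-2} applied to $(\mu_\eps, \nu_\eps)$ gives $\dim(\mu_\eps*\nu_\eps)=\min\lbrace 2, \dim\mu_\eps+\dim\nu_\eps\rbrace\geq\min\lbrace 2, \dim X+\dim Y-4\eps\rbrace$. Since $\mu_\eps*\nu_\eps$ is supported on $X_\eps+Y_\eps\subset X+Y$, sending $\eps\to 0$ yields the non-trivial lower bound $\dim(X+Y)\geq\min\lbrace 2, \dim X+\dim Y\rbrace$. The matching upper bound is routine, as $X+Y\subset\R^2$ is a Lipschitz image of $X\times Y$.

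I expect the main obstacle to be the geometric case analysis required to secure that $X_\eps$ and $Y_\eps$ are simultaneously not contained in parallel lines, strongly separated, totally non-linear and of near-full dimension. The Fraser-Pollicott construction is not a priori tailored to the non-collinearity requirement nor to preserving the non-linear structure, so one has to check that the few additional cylinder choices (the diagonal self-iterates needed for the rigidity argument above, and the cylinders near three non-collinear points when $X$ is not on a line) can be added without spoiling separation or noticeably lowering dimension. All remaining steps are routine approximation.
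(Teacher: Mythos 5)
Your proposal follows exactly the route the paper takes: the paper gives no separate proof of this corollary, but reduces it (in the discussion after the first corollary) to the measure-level statement, here Theorem \ref{thm-2}, via \cite[Proposition 2.5]{FraserPollicott2015} and \cite[Lemma 2.1]{PeresRamsSimonSolomyak2001}, precisely as you do; your case analysis for keeping the approximating attractors off parallel lines is the natural way to verify the extra hypothesis of Theorem \ref{thm-2}, and the paper leaves this (as well as total non-linearity of the subsystems) implicit. One correction to your side argument for total non-linearity: in $\R^2$ the maps of a conformal IFS are holomorphic injections, not M\"obius transformations --- Liouville's theorem is a $d\geq 3$ statement, as the paper notes in Section 2 --- so the loxodromic fixed-point argument is not available. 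The fact you want, namely that if $g\circ f^n\circ g^{-1}$ is affine for some $n\geq 1$ then $g\circ f\circ g^{-1}$ is affine, still holds for a holomorphic contraction $h=g\circ f\circ g^{-1}$: at the common fixed point the Koenigs linearizing coordinate of $h^{n}$ is unique up to a scalar, an affine map is linearized by an affine coordinate, hence $h$ is affine near the fixed point and therefore everywhere by the identity theorem. You are also right to flag that the diagonal words and the cylinders near three non-collinear points must survive the Fraser--Pollicott selection; these are finitely many adjustments and do not affect the dimension count, so the remaining steps go through as you describe.
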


\subsection{History}

Let us then review what was know of dissonance of self-conformal measures prior to Theorems \ref{thm-1} and \ref{thm-2}. First, the linear case $d=1$ is known, or at least follows easily from existing results. Namely, in \cite[Theorem 1.5]{BaranyKaenmakiPyoralaWu2023} it was shown that if $\mu$ and $\nu$ are self-conformal measures on $\R$, then \eqref{eq-dissonancedefinition} holds under an algebraic assumption on the defining interated function systems. While the result for totally non-linear self-conformal measures without algebraic assumptions is not explicitly stated, it follows with the same argument. Nevertheless, we will include a short argument covering this case in the proof of Theorem \ref{thm-1}. 

In the plane, Moreira and Zamudio \cite[Theorem B]{MoreiraZamudio2024} have shown that the conclusion of Corollary \ref{cor-2} holds for a class of self-conformal sets in $\C$ with the strong separation condition and an algebraic assumption on the defining iterated function systems. There the authors also considers sums of more than just two self-conformal sets. For a similar but non-intersecting class of \emph{self-affine sets} a result analogous to Corollary \ref{cor-2} was obtained in \cite{Pyorala2024}, also under an algebraic assumption on the defining iterated function systems. 

In dimensions $d\geq 3$ we are not aware of any existing results on dissonance of self-conformal sets or measures. 

\subsection{Versions of Theorem \ref{thm-1} with algebraic conditions}
It is possible to replace the assumption of non-linearity in Theorem \ref{thm-1} with various algebraic conditions on the defining iterated function system.

We let $CO(d)$ denote the conformal (special) orthogonal group of $\R^d$, 
\begin{equation*}
    CO(d) = \lbrace r O:\ r>0, O\in SO(d)\rbrace.
\end{equation*}
In particular, if $f$ is an orientation-preserving conformal map on $U$, then $Df(x) \in CO(d)$ for every $x\in U$. For a contractive conformal map $f$ with unique fixed point $x_0$, we write 
\begin{align*}
    \lambda(f) &:= \Vert Df(x_0)\Vert \in \R, \\
    O(f) &:= Df(x_0)\Vert Df(x_0)\Vert^{-1} \in SO(d).
\end{align*}
If $G$ is a topological group and $\mathcal A\subset G$, we write
\begin{equation*}
    \langle \mathcal A\rangle := \overline{\lbrace A_1 A_2\cdots A_n:\ n\geq 1, A_i\in \mathcal A\rbrace}.
\end{equation*}

\begin{theorem}\label{thm-3}
    Let $\Phi$ be a totally non-linear conformal iterated function system on $\R^d$, let $\mu$ be a self-conformal measure associated to $\Phi$ and let $K$ denote the attractor of $\Phi$. Suppose that 
    \begin{equation}\label{eq-bestcondition}
    \langle \lbrace \lambda(f) O(f), \lambda(f)^{-1} O(f)^{-1}:\ f\in \Phi\rbrace\rangle = CO(d).
    \end{equation}
    Then for any tangent-regular measure $\nu$ on $\R^d$, 
    \begin{equation*}
        \dim(\mu*\nu) = \min\lbrace d, \dim\mu+\dim\nu\rbrace.
    \end{equation*}
\end{theorem}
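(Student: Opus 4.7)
The plan is to combine the scenery-flow analysis of $\mu$, which under the algebraic condition \eqref{eq-bestcondition} acquires a $CO(d)$-symmetric tangent structure, with the López-Moreira projection theorem for convolutions under rotations.

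First, I would invoke the scenery machinery for self-conformal measures. Building on \cite{BaranyKaenmakiPyoralaWu2023} and the recent body of work on uniform scaling for self-conformal measures, the total non-linearity of $\Phi$ ensures that $\mu$ is uniformly scaling: for $\mu$-a.e.\ $x$, the scenery distributions of $\mu$ at $x$ converge to a fixed fractal distribution $P$ on measures. The algebraic condition \eqref{eq-bestcondition} then implies that $P$ is invariant under the natural $CO(d)$-action on measures; equivalently, the rotational component of the scenery at a typical point equidistributes for Haar measure on $SO(d)$, while the typical rescaled magnification $\mu^\#$ satisfies $\dim \mu^\# = \dim \mu$. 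Next, I would apply \cite[Theorem 2.3]{LopezMoreira2015}: for any exact-dimensional Borel probability measures $\eta_1, \eta_2$ on $\R^d$,
\begin{equation*}
    \dim(\eta_1 * g\eta_2) = \min\{d, \dim\eta_1 + \dim\eta_2\}
\end{equation*}
for Lebesgue-a.e.\ $g \in SO(d)$.

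To combine these two ingredients, I would analyze the lower local dimension of $\mu * \nu$ at a generic point $z = x + y$ with $x \sim \mu$ and $y \sim \nu$. Zooming in at scales $r_n \to 0$, the local behavior of $\mu * \nu$ near $z$ is captured by the convolution of the rescalings of $\mu$ at $x$ and of $\nu$ at $y$. By the scenery analysis, the first rescaling takes the form $g_n \mu^\#$ with $\{g_n\}$ equidistributing for Haar measure on $SO(d)$ and $\dim \mu^\# = \dim \mu$; by tangent-regularity of $\nu$, the second is a measure $\nu^\#$ with $\dim \nu^\# = \dim \nu$. Since the set of $g$ for which the López-Moreira conclusion holds has full Haar measure, a positive scenery-density of scales produces rotations $g_n$ for which $\dim(g_n \mu^\# * \nu^\#) = \min\{d, \dim \mu + \dim \nu\}$, and propagating this lower bound back from the scenery gives the desired bound on $\dim(\mu * \nu)$.

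The main obstacle is the final transfer step: rigorously converting a positive scenery density of good rotations into a global lower bound for the local dimension of $\mu * \nu$. This amounts to setting up a CP-chain or Marstrand-type slicing correspondence adapted to $CO(d)$-valued sceneries, in which magnifications of $\mu$ and of $\nu$ are linked at matched points and scales and the resulting error terms are controlled uniformly. The one-dimensional precedent in \cite{BaranyKaenmakiPyoralaWu2023} provides the template, but handling the full $CO(d)$ action in dimension $d \geq 2$, and matching it with the tangent structure of an arbitrary tangent-regular $\nu$, constitutes the technical heart of the argument.
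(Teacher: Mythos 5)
You have correctly identified the two main ingredients (scenery invariance coming from self-conformality plus the L\'opez--Moreira projection theorem), but the step you defer as ``the main obstacle'' --- converting a positive density of good rotations in the scenery into a lower bound for $\dim(\mu*\nu)$ --- is precisely where the content of the proof lies, and the paper resolves it with tools you do not invoke. The key structural move is to work with tangent distributions of the \emph{product} measure $\mu\times\nu$ in the maximum metric: rescalings of $\mu\times\nu$ are automatically products of rescalings of $\mu$ and of $\nu$ at matched points and scales, so the ``matching'' you worry about is built in from the start, and no CP-chain or slicing correspondence needs to be constructed. Proposition \ref{prop-tangentdistributioninvariant} then gives, for each ergodic component $P$ of such a tangent distribution, invariance under $S_t(Df_h(x),I)$ and $S_t(Df_h(x)^{-1},I)$; evaluating at the fixed points $x_f$ (where $D(h\circ f\circ h^{-1})(h(x_f))=Df(x_f)=\lambda(f)O(f)$) and using \eqref{eq-bestcondition} yields invariance under $S_t(L,I)$ for a dense set of $L\in CO(d)$. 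The transfer then happens in two precise steps: Proposition \ref{prop-FDprojections} upgrades the L\'opez--Moreira ``almost every $L$'' statement to the single map $L=I$ by combining the $S_t(L,I)$-invariance with the lower semicontinuity of $L\mapsto\int\dim\pi(L\eta\times\gamma)\,dP$ from Hochman--Shmerkin, giving $\dim\pi P=\min\{d,\dim P\}$; and Theorem \ref{thm-hochmanprojections} (the local entropy averages bound $\dim\pi\rho\ge\essinf\inf_{P\in TD(\rho,x)}\dim\pi P$ applied to $\rho=\mu\times\nu$) converts this into $\dim(\mu*\nu)\ge\min\{d,\dim\mu+\dim\nu\}$. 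Without these two results your argument does not close.

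Two further inaccuracies: total non-linearity does not ensure that $\mu$ is uniformly scaling, and the proof neither needs nor establishes a single limiting fractal distribution --- it handles arbitrary ergodic components of arbitrary tangent distributions, using tangent-regularity of $\mu$ and $\nu$ only to guarantee $\dim P=\dim\mu+\dim\nu$ for each such component. Also, the L\'opez--Moreira theorem as used here (Theorem \ref{thm-moreira}) randomizes over $(\R\times SO(d))^2$, i.e.\ over dilations as well as rotations; this is exactly why the hypothesis \eqref{eq-bestcondition} demands density in all of $CO(d)$ rather than just surjectivity onto $SO(d)$, a point your sketch blurs by speaking only of equidistribution of the rotational component.
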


The condition \eqref{eq-bestcondition} in Theorem \ref{thm-3} is the most general one the present method allows, but is likely stronger than what is needed for the conclusion. In particular, it seems possible that the assumption $\langle \lbrace O(f):\ f\in \Phi\rbrace\rangle = SO(d)$ could be enough. At least we are not aware of any counterexamples.  

We will next review some algebraic assumpions on $\Phi$ which ensure that the condition \eqref{eq-bestcondition} of Theorem \ref{thm-3} is satisfied. Let 
\begin{equation*}
    O(\Phi) := \lbrace O(f), O(f)^{-1}:\ f\in \Phi\rbrace.
\end{equation*}

\begin{corollary}\label{thm-4}
    Let $\Phi$ be a totally non-linear conformal iterated function system on $\R^d$ with $d\geq 3$, and let $\mu$ be a self-conformal measure associated to $\Phi$. Suppose that 
    \begin{enumerate}
        \item $\langle O(\Phi)\rangle = SO(d)$,
        \item there exist $f,g \in\Phi$ such that $\frac{\log \lambda(f)}{\log\lambda(g)}\not\in\Q$, and
        \item there exist $f,g\in \Phi$ with $O(f) = O(g)$ and $\lambda(f)\neq \lambda(g)$. 
    \end{enumerate}
    Then for any tangent-regular measure $\nu$ on $\R^d$, 
    \begin{equation*}
        \dim(\mu*\nu) = \min\lbrace d, \dim\mu+\dim\nu\rbrace.
    \end{equation*}
\end{corollary}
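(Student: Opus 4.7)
The plan is to derive the hypothesis \eqref{eq-bestcondition} of Theorem \ref{thm-3} from the three algebraic conditions and then invoke that theorem. Identify $CO(d)$ with the product group $\R_{>0}\times SO(d)$ via $rO\leftrightarrow (r,O)$, and let $H$ denote the closed subgroup generated by the pairs $(\lambda(f),O(f))^{\pm 1}$, $f\in\Phi$. Write $\pi_1:H\to\R_{>0}$ and $\pi_2:H\to SO(d)$ for the coordinate projections, and let $N:=\ker(\pi_2|_H)=H\cap(\R_{>0}\cdot I)$. As a closed subgroup of $\R_{>0}$, $N$ is either trivial, of the form $r_0^{\Z}$ for some $r_0>1$, or all of $\R_{>0}$. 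The goal is $H=CO(d)$, for which I will show both $\pi_2(H)=SO(d)$ and $N=\R_{>0}$.

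By assumption (3), there exist $f,g\in\Phi$ with $O(f)=O(g)$ and $r:=\lambda(f)/\lambda(g)\neq 1$; then
\[
\bigl(\lambda(f)O(f)\bigr)\bigl(\lambda(g)O(g)\bigr)^{-1}=(r,I)\in H,
\]
so $N$ is nontrivial. Next I would prove $\pi_2(H)=SO(d)$: by (1), any $O\in SO(d)$ is a limit $O=\lim_n O_n$ with each $O_n$ a finite product of elements of $O(\Phi)$; lift each $O_n$ to an element $h_n\in H$ via the corresponding product of generators. Multiplying $h_n$ by a suitable integer power of $(r,I)\in H$ places its first coordinate in a fixed compact interval; extracting a convergent subsequence produces a limit in $H$ (closed) that projects to $O$.

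Now define a map
\[
\nu:SO(d)\longrightarrow\R_{>0}/N,\qquad \nu(O):=s\bmod N\text{ where }(s,O)\in H.
\]
It is well-defined (two choices of $s$ differ by an element of $N$), a group homomorphism (from multiplication in $H$), and continuous (by the same power-of-$(r,I)$ adjustment and subsequence argument used above, applied at the identity, then translated to every point). Since $d\geq 3$, the group $SO(d)$ is semisimple and connected, hence perfect: $SO(d)=[SO(d),SO(d)]$. Any continuous homomorphism from a perfect group to the abelian group $\R_{>0}/N$ is therefore trivial, so $\nu\equiv 1$. This means that for every $O\in SO(d)$ we can find $(s,O)\in H$ with $s\in N$; multiplying by $(s^{-1},I)\in H$ gives $(1,O)\in H$. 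Thus $\{1\}\times SO(d)\subseteq H$.

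To close, I use (2): for the $f',g'\in\Phi$ it supplies, $(\lambda(f'),O(f'))\in H$ multiplied by $(1,O(f')^{-1})\in H$ yields $(\lambda(f'),I)\in H$, and similarly $(\lambda(g'),I)\in H$. Because $\log\lambda(f')/\log\lambda(g')\notin\Q$, the closed subgroup of $\R_{>0}$ generated by $\lambda(f')$ and $\lambda(g')$ is dense and hence equals $\R_{>0}$, so $\R_{>0}\cdot I\subseteq H$. Combined with $\{1\}\times SO(d)\subseteq H$ this gives $H=CO(d)$, which is exactly \eqref{eq-bestcondition}, and Theorem \ref{thm-3} finishes the proof. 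The main obstacle is the middle step: establishing that $\pi_2(H)=SO(d)$ and that $\nu$ is a well-defined continuous homomorphism requires careful bookkeeping of the scale coordinate in the case $N=r_0^{\Z}$ (discrete), and the crucial identity $\nu\equiv 1$ depends on the perfectness of $SO(d)$, which is exactly why the statement is restricted to $d\geq 3$.
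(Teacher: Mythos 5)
Your proof is correct and takes essentially the same route as the paper: the paper reduces the corollary to its Proposition \ref{prop-generationRd} and then invokes Theorem \ref{thm-3}, exactly as you do. The only differences are cosmetic --- the paper's proof of Proposition \ref{prop-generationRd} kills the scale-defect homomorphism into $\R/L$ via semisimplicity of $\mathfrak{so}(d)$ rather than perfectness of $SO(d)$, and defines it on the dense subgroup $\pi_1(\Gamma)$ and extends by continuity rather than first proving $\pi_2(H)=SO(d)$ as you do.
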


The conditions (1) - (3) together ensure that \eqref{eq-bestcondition} holds so that we may apply Theorem \ref{thm-3}. While one might expect that (1) and (2) (at least in some form) are necessary for \eqref{eq-bestcondition}, we do not know whether the assumption (3) is necessary. In the plane we have the following.

\begin{corollary}\label{thm-5}
   Let $\Phi$ be a totally non-linear conformal iterated function system on $\R^2$, and let $\mu$ be a self-conformal measure associated to $\Phi$. Suppose that 
    \begin{enumerate}
        \item $\langle O(\Phi)\rangle = SO(2)$,
        \item there exist $f_1, f_2, f_3, f_4\in \Phi$ (not necessarily distinct) such that $O(f_1) = O(f_2)$, $O(f_3) = O(f_4)$ and $\frac{\log \lambda(f_1)-\log\lambda(f_2)}{\log\lambda(f_3)-\log\lambda(f_4)}\not\in\Q$. 
    \end{enumerate}
    Then for any tangent-regular measure $\nu$ on $\R^2$, 
    \begin{equation*}
        \dim(\mu*\nu) = \min\lbrace 2, \dim\mu+\dim\nu\rbrace.
    \end{equation*}
\end{corollary}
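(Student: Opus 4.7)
The plan is to verify the hypothesis \eqref{eq-bestcondition} of Theorem \ref{thm-3}; the conclusion then follows by direct invocation. Since $d=2$, the group $CO(2) = \R_{+}\cdot SO(2)$ is abelian, and the logarithm/angle map identifies it with the additive group $\R \times (\R/2\pi\Z)$ via $rR_\theta \leftrightarrow (\log r, \theta)$, where $R_\theta$ denotes rotation by angle $\theta$. Writing $O(f) = R_{\theta(f)}$, condition \eqref{eq-bestcondition} translates to the statement that the closed subgroup $\overline{H}$ of $\R \times (\R/2\pi\Z)$ generated by $\{(\log\lambda(f), \theta(f)) : f \in \Phi\}$ is the whole group. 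The presence of the inverses $\lambda(f)^{-1}O(f)^{-1}$ in the generating set in \eqref{eq-bestcondition} ensures that the relevant semigroup closure coincides with the group closure, so this reformulation is legitimate.

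First I would extract a dense subgroup of the scaling factor inside $\overline{H}$ using condition (2). Since $O(f_1) = O(f_2)$ and $O(f_3) = O(f_4)$, the elements
\begin{equation*}
    (\log\lambda(f_1) - \log\lambda(f_2),\, 0), \qquad (\log\lambda(f_3) - \log\lambda(f_4),\, 0)
\end{equation*}
both lie in $H$, and their first coordinates have irrational ratio by hypothesis. A classical one-dimensional density argument shows that the subgroup of $\R$ they generate is dense, so $\R \times \{0\} \subset \overline{H}$.

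Next I would use condition (1) to recover the circle factor. The projection of $H$ onto $\R/2\pi\Z$ equals the subgroup of $SO(2)$ generated by $\{O(f) : f\in\Phi\}$, whose closure is $SO(2)$ by (1); in particular, this projection is dense. Combined with $\R \times \{0\}\subset \overline{H}$, a standard quotient argument finishes the job: the image of $\overline{H}$ in $(\R \times (\R/2\pi\Z))/(\R \times \{0\}) \cong \R/2\pi\Z$ is a closed, dense subgroup, hence all of $\R/2\pi\Z$, and since the kernel $\R\times\{0\}$ is already contained in $\overline{H}$, we conclude $\overline{H} = \R \times (\R/2\pi\Z)$.

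I do not foresee any serious obstacle here; the corollary is a short structural verification of \eqref{eq-bestcondition} under the given algebraic assumptions. The main reason this planar version is cleaner than Corollary \ref{thm-4} is precisely that $CO(2)$ is \emph{abelian}, which lets us decouple the scaling and rotational parts of the generators. In dimension $d\geq 3$ the noncommutativity of $SO(d)$ obstructs this decoupling, which is why Corollary \ref{thm-4} requires the additional condition (3) forcing two maps with identical rotational parts but distinct contraction ratios.
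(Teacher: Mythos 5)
Your proposal is correct and follows essentially the same route as the paper: both verify condition \eqref{eq-bestcondition} by first using assumption (2) to show that the closed group generated by $\lbrace \lambda(f)O(f), \lambda(f)^{-1}O(f)^{-1}:\ f\in\Phi\rbrace$ contains $rI$ for every $r>0$ (via the pure-scaling elements $\lambda(f_1)\lambda(f_2)^{-1}I$ and $\lambda(f_3)\lambda(f_4)^{-1}I$ with irrational log-ratio), and then combining this with assumption (1) to conclude the group is all of $CO(2)$, after which Theorem \ref{thm-3} applies. Your write-up is simply a more explicit rendering, via the identification $CO(2)\cong\R\times(\R/2\pi\Z)$ and a quotient argument, of the paper's terse two-step verification.
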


Again, the conditions (1) and (2) ensure that \eqref{eq-bestcondition} holds. In the $d=2$ case, however, both (1) and (2) are necessary for this: Since $SO(2) \cong \R/\Z$, one can choose affine maps $f_i(x) = r_i O_i x + a_i$ in such a way that $\langle \lbrace O_i\rbrace \rangle = SO(2)$ and $\frac{\log r_i}{\log r_j} \not\in \Q$ for every $i\neq j$, but $\langle \lbrace r_i O_i:\ i\in\Gamma\rbrace\rangle$ is contained in a one-dimensional submanifold of $CO(2)$. 

Ultimately, the proofs of both Corollaries \ref{thm-4} and \ref{thm-5} rely on Theorem \ref{thm-3}, and it seems at least possible that just the assumption (1) could be enough for both conclusions.  

\subsection{Dissonance of self-conformal measures}

For convolutions between two self-conformal measures, we have very similar algebraic conditions. For $A,B\subset CO(d)$, write $A^{-1} := \lbrace a^{-1}:\ a\in A\rbrace$ and $AB := \lbrace ab:\ a\in A, b\in B\rbrace$. 

\begin{theorem}\label{thm-6}
    Let $\Phi$ and $\Psi$ be totally non-linear conformal iterated function systems on $\R^d$ with $d\geq 3$, and let $\mu$ and $\nu$ be self-conformal measures associated to $\Phi$ and $\Psi$. Suppose that 
    \begin{enumerate}
        \item $\langle O(\Phi) O(\Psi)^{-1} \rangle = CO(d)$, 
        \item there exist $f, g\in\Phi\cup \Psi$ such that $\frac{\log\lambda(f)}{\log\lambda(g)}\not\in\Q$, and 
        \item there exist $f,g \in \Phi \cup \Psi$ such that $O(f) = O(g)$ and $\lambda(f)\neq \lambda(g)$. 
    \end{enumerate}
    Then
    \begin{equation*}
        \dim(\mu*\nu) = \min\lbrace d, \dim\mu+\dim\nu\rbrace.
    \end{equation*}
\end{theorem}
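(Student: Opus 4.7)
The plan is to run the scenery-flow argument behind Theorem \ref{thm-3}, but now with $\mu\times\nu$ in place of $\mu\times(\text{tangent-regular})$, so that both factors of the sum projection $\pi(x,y)=x+y$ are controlled through the equidistribution of sceneries of self-conformal measures. Since $\Phi$ and $\Psi$ are totally non-linear, the scenery equidistribution results already exploited in the proof of Theorem \ref{thm-3} identify, at $(\mu\times\nu)$-typical points $(x,y)$, the tangent distributions of $\mu$ at $x$ and $\nu$ at $y$ as being supported on micromeasures of the form $r_1 O_1 \tilde\mu$ and $r_2 O_2 \tilde\nu$, with the scale-rotation pairs $(r_i,O_i)$ distributed according to Haar measure on the closures in $CO(d)$ of $\langle \{\lambda(f) O(f), \lambda(f)^{-1}O(f)^{-1}: f\in\Phi\}\rangle$ and $\langle \{\lambda(g)O(g), \lambda(g)^{-1}O(g)^{-1}: g\in\Psi\}\rangle$ respectively.

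Pulling a common scale-rotation factor out of the convolution, the dimension of $(r_1 O_1 \tilde\mu)*(r_2 O_2\tilde\nu)$ reduces to that of $\tilde\mu * (r_1^{-1}r_2)(O_1^{-1}O_2)\tilde\nu$, which is governed by the closure in $CO(d)$ of the semigroup generated by the relative scale-rotations $\lambda(f)\lambda(g)^{-1} O(f) O(g)^{-1}$ (with $f\in\Phi$, $g\in\Psi$) together with their inverses. Conditions (1)--(3) are arranged, in direct analogy with Corollary \ref{thm-4}, to force this closure to be all of $CO(d)$: condition (1) handles the rotational part, while conditions (2) and (3) produce a one-parameter subgroup of homotheties irrationally related to some scaling appearing in $\Phi\cup\Psi$, making the scaling component fill all of $\R_+$. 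Once density in $CO(d)$ is established, the López--Moreira projection theorem \cite[Theorem 2.3]{LopezMoreira2015} applied to products of typical tangents yields
\begin{equation*}
    \dim\bigl((r_1 O_1\tilde\mu)*(r_2 O_2\tilde\nu)\bigr) = \min\lbrace d, \dim\mu+\dim\nu\rbrace
\end{equation*}
for almost every relative scale-rotation, and a standard scenery-to-global dimension transfer delivers the conclusion for $\dim(\mu*\nu)$.

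The main technical obstacle is to ensure that the joint scenery of $(\mu,\nu)$ equidistributes on the \emph{product} of the two subgroups of $CO(d)$ rather than concentrating on a diagonal or other proper subgroup; this is why condition (1) must involve the relative object $O(\Phi)O(\Psi)^{-1}$ rather than each of $\langle O(\Phi)\rangle$ and $\langle O(\Psi)\rangle$ individually. Because the sceneries of $\mu$ and $\nu$ are driven by independent symbolic dynamics on the coding spaces of $\Phi$ and $\Psi$, joint equidistribution should follow from the individual equidistribution theorems together with the density established above, though verifying the necessary joint ergodicity is the heart of the matter.
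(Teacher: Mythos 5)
Your overall architecture matches the paper's: pass to tangents of $\mu\times\nu$, extract invariance under relative scale-rotations, use conditions (1)--(3) to generate a dense subgroup of $CO(d)$, apply the L\'opez--Moreira projection theorem, and transfer back via local entropy averages. However, the step you yourself flag as ``the heart of the matter'' --- joint equidistribution of the scale-rotation components of the sceneries of $\mu$ and $\nu$ over a product of Haar measures --- is a genuine gap, and moreover it is not the right thing to try to prove. No joint ergodicity of the two symbolic systems is established anywhere, and it is not clear it holds; the two codings are not coupled, and the independence you invoke does not by itself prevent the pair $(r_1O_1,r_2O_2)$ from concentrating on a proper subset. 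The paper avoids this entirely: it fixes an \emph{ergodic component} $P$ of a tangent distribution of the product measure $\mu\times\nu$ and applies the invariance proposition (Proposition \ref{prop-tangentdistributioninvariant}) twice, once per coordinate, to obtain $S_t(A,I)P=P$ for $A$ dense in $\Gamma_\Phi=\langle\{\lambda(f)O(f),\lambda(f)^{-1}O(f)^{-1}\}\rangle$ and $S_t(I,B)P=P$ for $B$ dense in $\Gamma_\Psi$. These two one-sided invariances of the single distribution $P$, combined with the identity $\dim\pi(A\eta\times B\gamma)=\dim\pi(B^{-1}A\eta\times\gamma)$, already give that $\int\dim\pi(L\eta\times\gamma)\,dP$ is constant for $L$ ranging over the dense set $\Gamma_\Phi\Gamma_\Psi^{-1}$; lower semi-continuity of $L\mapsto\int\dim\pi(L\eta\times\gamma)\,dP$ and integration against Haar measure on $CO(d)$ then import the L\'opez--Moreira bound. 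No statement about the distribution of the scale-rotation components is needed.

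Two smaller inaccuracies: the tangent measures of a self-conformal measure are not single images $rO\tilde\mu$ but generalized convolutions $(\gamma\cdot\mu)_{B(0,1)}$ over a distribution $\gamma$ on conformal maps (Lemma \ref{lemma-tangentstructure}), so the invariance must be derived through the scenery-return argument rather than read off directly; and your description of how (2) and (3) produce the scaling part should be made precise --- condition (3) is what guarantees $rI\in\Gamma_\Phi\Gamma_\Psi$ for some $r\neq 1$ (needed for compactness of $\R/L$ in Proposition \ref{prop-generationRd}), while condition (2) makes the log-scaling subgroup dense in $\R$. With the invariance argument replaced as above, your outline becomes the paper's proof.
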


\begin{theorem}\label{thm-7}
    Let $\Phi$ and $\Psi$ be totally non-linear conformal iterated function systems on $\R^2$, and let $\mu$ and $\nu$ be self-conformal measures associated to $\Phi$ and $\Psi$. Suppose that 
    \begin{enumerate}
        \item $\langle O(\Phi) O(\Psi)^{-1}\rangle = CO(2)$, and 
        \item there exist $f_1, f_2, f_3,f_4\in \Phi\cup\Psi$ (not necessarily distinct) with $O(f_1) = O(f_2)$, $O(f_3) = O(f_4)$ and $\frac{\log\lambda(f_1)-\log\lambda(f_2)}{\log\lambda(f_3)-\log\lambda(f_4)}\not\in\Q$. 
    \end{enumerate}
    Then
    \begin{equation*}
        \dim(\mu*\nu) = \min\lbrace 2, \dim\mu+\dim\nu\rbrace.
    \end{equation*}
\end{theorem}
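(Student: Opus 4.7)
The plan is to follow the pattern by which Corollary \ref{thm-5} is deduced from Theorem \ref{thm-3} in the single-measure setting, but adapted to the two-measure framework underlying Theorem \ref{thm-6}. The first step is to use (or extract from the proof of Theorem \ref{thm-6}) a joint version of Theorem \ref{thm-3}: given totally non-linear conformal iterated function systems $\Phi, \Psi$ on $\R^d$, if a certain closed subgroup $H \leq CO(d)$ built from the paired scaling-rotation data of $\Phi$ and $\Psi$ equals $CO(d)$, then any associated self-conformal measures $\mu$ and $\nu$ dissonate. This two-measure framework should combine the scenery flow on the joint product $\Phi\times\Psi$ with the López--Moreira projection theorem applied to the sum map $(x,y)\mapsto x+y$; tangent-regularity of both $\mu$ and $\nu$ (Lemma \ref{lemma-tangentregular}) will enter to control the tangent distributions of the two measures at typical points. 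Granting this framework, Theorem \ref{thm-7} reduces to the algebraic verification that hypotheses (1) and (2) imply $H = CO(2)$.

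For the algebraic step I exploit that $CO(2)\cong \R^+\times SO(2)$ is a two-dimensional abelian Lie group. Condition (1) carries the rotational content and in particular forces the projection of $H$ onto $SO(2)$ to be surjective. The scaling content comes from condition (2): whenever $f_1, f_2\in \Phi\cup\Psi$ satisfy $O(f_1)=O(f_2)$, the element $\lambda(f_1) O(f_1)\cdot(\lambda(f_2) O(f_2))^{-1} = \lambda(f_1)/\lambda(f_2)$ is a pure positive scaling and, after bridging between $\Phi$ and $\Psi$ via condition (1) to write it in the admissible form for the joint data, it sits inside $H$. A second such pair $f_3, f_4$ gives $\lambda(f_3)/\lambda(f_4)\in H$, and the irrationality of $(\log\lambda(f_1)-\log\lambda(f_2))/(\log\lambda(f_3)-\log\lambda(f_4))$ makes the closed subgroup of $\R^+$ generated by these two scalars dense. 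A closed subgroup of $CO(2)$ that projects onto $SO(2)$ and whose $\R^+$-slice is dense must be all of $CO(2)$, completing the verification.

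The hardest part is pinning down the correct statement of the joint two-measure analog of Theorem \ref{thm-3} and the corresponding group $H$. The scenery-flow argument behind Theorem \ref{thm-3} is naturally adapted to a one-sided setup in which only one measure's tangents vary; in the two-measure setting one must simultaneously track tangent distributions of $\mu$ and $\nu$ under a joint scenery and identify precisely which combinations of differentials from $\Phi$ and $\Psi$ govern the orientation of the sum set. Once the correct group $H$ is extracted and the ergodicity of the joint scenery flow under its action is established, the algebraic reasoning above is routine, and the remaining analytic input (projection theorem plus tangent regularity) is the same as in Theorem \ref{thm-3}.
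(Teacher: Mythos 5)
Your proposal is correct and follows essentially the same route as the paper: the paper's proof of this theorem simply reruns the two-measure scenery argument from Theorem \ref{thm-6} verbatim (producing the group $\Gamma=\Gamma_\Phi\Gamma_\Psi$ with $\Gamma_\Phi=\langle\{\lambda(f)O(f),\lambda(f)^{-1}O(f)^{-1}:f\in\Phi\}\rangle$) and then replaces the appeal to Proposition \ref{prop-generationRd} by exactly your algebraic verification, namely that $\lambda(f_1)\lambda(f_2)^{-1}I$ and $\lambda(f_3)\lambda(f_4)^{-1}I$ lie in the closed group $\Gamma$, the irrationality hypothesis forces $rI\in\Gamma$ for all $r>0$, and hypothesis (1) then yields $\Gamma=CO(2)$.
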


\subsection{Orthogonal projections} In \cite{BruceJin2019} Bruce and Jin showed that under a suitable ``denseness''-assumption on $\langle\lbrace Df(x) \Vert Df(x)\Vert^{-1}:\ x\in K, f\in \Phi\rbrace\rangle$, the dimension on self-conformal measures is preserved under orthogonal projection onto any linear subspace. Using Proposition \ref{prop-keyprop} we may verify this assumption for self-conformal measures which are totally non-linear. 

\begin{theorem}\label{thm-8}
    Let $\Phi$ be a totally non-linear conformal iterated function system on $\R^d$, and let $\mu$ be a self-conformal measure associated to $\Phi$. Suppose that $\mu$ is not supported on an affine hyperplane or a sphere. Then for any $0<k<d$ and any orthogonal projection $\pi:\R^d\to\R^k$, 
    \begin{equation*}
        \dim \mu \circ \pi^{-1}= \min\lbrace k, \dim\mu\rbrace.
    \end{equation*}
\end{theorem}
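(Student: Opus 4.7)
The strategy is to reduce Theorem \ref{thm-8} to the projection theorem of Bruce and Jin \cite{BruceJin2019}, which (as described in the text preceding the statement) guarantees that dimension is preserved under every orthogonal projection provided the closed semigroup generated by the normalized derivatives $\lbrace Df(x)\Vert Df(x)\Vert^{-1}:\ x\in K,\ f\in\Phi\rbrace$ is sufficiently rich---in the sense that it equals, or is dense in, $SO(d)$. My plan is therefore to verify this denseness condition using Proposition \ref{prop-keyprop}, which is already available in the paper and should apply under precisely the hypotheses of Theorem \ref{thm-8}: total non-linearity together with the geometric non-degeneracy that $\mu$ is not supported on an affine hyperplane or a sphere.

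First I would recall the statement of Bruce and Jin and identify explicitly which density condition on the derivative semigroup they require. By Liouville's classification, every conformal map on $\R^d$ with $d\geq 2$ is either a similarity or a Möbius transformation, and the invariance of an affine hyperplane or a round sphere under iteration of $\Phi$ imposes a non-trivial closed subgroup constraint on the orthogonal parts of the derivatives. Excluding both of these degenerate situations is exactly what is needed to open up the full group $SO(d)$ in the closure.

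Second I would apply Proposition \ref{prop-keyprop} to the IFS $\Phi$. Since $\Phi$ is totally non-linear and $\mu$ is supported neither on an affine hyperplane nor on a sphere, the proposition yields the required denseness of the normalized derivative semigroup. Feeding this into the Bruce-Jin theorem produces the identity $\dim \mu\circ \pi^{-1} = \min\lbrace k, \dim\mu\rbrace$ for every $0<k<d$ and every orthogonal projection $\pi:\R^d\to\R^k$, completing the argument.

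The main obstacle will likely not be the argument itself---which is essentially a direct combination of two pre-existing ingredients---but rather the precise matching between the form of denseness delivered by Proposition \ref{prop-keyprop} (which, in view of Theorem \ref{thm-3}, is plausibly phrased in the conformal group $CO(d)$ and incorporates scaling factors) and the form required by Bruce and Jin, which involves only the $SO(d)$-valued orthogonal parts. Passing from $CO(d)$-denseness to $SO(d)$-denseness should be a straightforward quotient argument: the projection $CO(d)\to CO(d)/\R_+ \cong SO(d)$ is continuous and surjective, so the image of a dense semigroup remains dense. If Proposition \ref{prop-keyprop} already outputs a subset of $SO(d)$ directly, this step is unnecessary.
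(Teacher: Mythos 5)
Your proposal is correct and follows essentially the same route as the paper, which likewise derives Theorem \ref{thm-8} by using Proposition \ref{prop-keyprop} to verify the denseness hypothesis (Property (A2)) of \cite[Theorem 1.3]{BruceJin2019}; the paper handles the $CO(d)$-to-$SO(d)$ passage you flag by noting that $SO(d)$ is compact, so the closed semigroup generated by the normalized derivatives is a group containing the quotients $O_f(x)O_f(y)^{-1}$ and hence equals $SO(d)$.
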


\begin{corollary}
    Let $\Phi$ be a totally non-linear conformal iterated function system on $\R^d$ with attractor $K$. Suppose that $K$ is not contained in an affine hyperplane or a sphere. Then for any $0<k<d$ and any orthogonal projection $\pi:\R^d\to\R^k$, 
    \begin{equation*}
        \dim \pi(X) = \min\lbrace k, \dim X\rbrace.
    \end{equation*}
\end{corollary}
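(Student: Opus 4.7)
The plan is to deduce this set-theoretic corollary from its measure-theoretic counterpart, Theorem~\ref{thm-8}, using the standard approximation scheme already indicated in the introduction. The upper bound $\dim \pi(K) \le \min\lbrace k, \dim K\rbrace$ is immediate, since $\pi$ is $1$-Lipschitz and takes values in $\R^k$; only the lower bound requires work.

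Given $\delta > 0$, I would first invoke \cite[Proposition~2.5]{FraserPollicott2015} to produce a sub-system $\Phi_\delta$, consisting of compositions of maps from $\Phi$, which satisfies the strong separation condition and whose attractor $K_\delta \subset K$ satisfies $\dim K_\delta > \dim K - \delta/2$. Then, by \cite[Lemma~2.1]{PeresRamsSimonSolomyak2001}, I would obtain a self-conformal measure $\mu_\delta$ associated to $\Phi_\delta$ with $\dim \mu_\delta > \dim K - \delta$. Assuming for the moment that $\mu_\delta$ satisfies the hypotheses of Theorem~\ref{thm-8}, applying that theorem would yield
\begin{equation*}
    \dim \pi(K) \ge \dim(\mu_\delta \circ \pi^{-1}) = \min\lbrace k, \dim \mu_\delta\rbrace > \min\lbrace k, \dim K\rbrace - \delta,
\end{equation*}
where the first inequality uses that $\mu_\delta \circ \pi^{-1}$ is a probability measure supported in $\pi(K_\delta) \subset \pi(K)$. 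Letting $\delta \to 0$ would finish the argument.

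The step requiring care is checking that $\Phi_\delta$ and $\mu_\delta$ inherit the hypotheses of Theorem~\ref{thm-8}. Total non-linearity should be transferred as follows: given any $g \in \mathcal C_{\rm conf}((-1,1)^d)$, one uses the total non-linearity of $\Phi$ itself, together with the fact that a conformal composition containing a non-affine factor conjugated by $g$ is generically non-affine, to exhibit a word $F \in \Phi_\delta$ for which $g \circ F \circ g^{-1}$ is not affine. The main obstacle I anticipate is verifying the geometric condition that $K_\delta = \spt \mu_\delta$ is contained in neither an affine hyperplane nor a sphere. When $\dim K > d - 1$ this holds automatically for all small enough $\delta$, since any such hypersurface has dimension $d - 1 < \dim K_\delta$. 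In the regime $\dim K \le d - 1$ a more careful choice of $\Phi_\delta$ is needed: using the density in $K$ of fixed points of words in $\bigcup_n \Phi^n$, together with the hypothesis that $K$ itself is not contained in any such hypersurface, one can select $\Phi_\delta$ so that the fixed points of its generators span a configuration avoiding every hyperplane and every sphere, thus ensuring the same for $K_\delta$.
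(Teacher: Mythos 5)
Your reduction to Theorem~\ref{thm-8} via \cite[Proposition~2.5]{FraserPollicott2015} and \cite[Lemma~2.1]{PeresRamsSimonSolomyak2001} is exactly the route the paper takes (it is stated once in the introduction as the mechanism behind all the set-level corollaries, with the details left to the reader). Your proposal is correct and, if anything, more explicit than the paper about the two points that genuinely require checking, namely that the chosen subsystem inherits total non-linearity and that its attractor is still not contained in an affine hyperplane or a sphere.
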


Theorem \ref{thm-8} follows by combining Proposition \ref{prop-keyprop} with \cite[Theorem 1.3]{BruceJin2019}. In \cite[Theorem 1.3]{BruceJin2019} it was shown that the conclusion of Theorem \ref{thm-8} holds if one assumes \cite[Property (A2)]{BruceJin2019}. When $\mu$ is totally non-linear and not supported on a smooth hypersurface, this property is ensured by Proposition \ref{prop-keyprop}: Since $SO(d)$ is compact, Proposition \ref{prop-keyprop} ensures that $\langle\lbrace Df(x) \Vert Df(x)\Vert^{-1}:\ x\in K, f\in \Phi\rbrace\rangle = SO(d)$. We leave the details to the interested reader.

\subsection{Proof strategy} 
Our proof is heavily inspired by Hochman's proof of \cite[Theorem 1.39]{Hochmanpreprint} which shows Theorem \ref{thm-1} for convolutions between self-similar and Ahlfors-regular measures on the line under the strong separation condition and an algebraic assumption on the defining iterated function system. The proof consists of the following three steps. Let $\mu$ and $\nu$ be as in Theorem \ref{thm-1}.

{\bf Step 1.} Using the fact that $\mu$ is self-conformal we show that the \emph{tangent distributions} (see Section 2) of $\mu\times\nu$ have certain invariance under linear maps in the first $d$ coordinates. Specifically, if $P$ is (an ergodic component of) a tangent distribution of $\mu\times\nu$, then 
\begin{equation*}
     P\circ (L, I)^{-1} = P
\end{equation*}
for a rich class of conformal linear maps $L:\R^d\to\R^d$, where $(L,I)$ denotes the map $\mu\times\nu \mapsto (\mu\circ L^{-1}) \times \nu$. This invariance is recorded in Proposition \ref{prop-tangentdistributioninvariant}, and its proof very closely follows the proof of \cite[Theorem 1.1]{BaranyKaenmakiPyoralaWu2023}.

{\bf Step 2.} Relying on the assumption of non-linearity (or, in later theorems, the algebraic assumptions on $\Phi$), we show that the class of linear maps $L$ from {\bf Step 1} actually equals the entire conformal orthogonal group $CO(d)$. The deduction of this from the assumption of non-linearity is recorded in Proposition \ref{prop-keyprop} which is perhaps the main technical novelty of this paper. 

{\bf Step 3.} This step is an adaptation of \cite[Proposition 1.38]{Hochmanpreprint} which in turn uses the local entropy averages machinery developed in Hochman and Shmerkin's seminal work \cite{HochmanShmerkin2012}. Using the invariance established in {\bf Steps 2} and {\bf 3} together with a projection theorem of López and Moreira \cite{LopezMoreira2015} for product sets, we show that the dimension of (the ergodic components of) tangent distributions of $\mu\times\nu$ is preserved under the linear map $\R^d\times\R^d \to\R^d$, $(x,y)\mapsto x+y$. This is done in Proposition \ref{prop-FDprojections}. Consequently, the dimension of $\mu\times\nu$ is preserved under the same map. Here the tangent-regularity of $\nu$ is crucially used, exactly as Ahlfors regularity was used in \cite[Theorem 1.39]{Hochmanpreprint}, to deduce that the dimension of ergodic components of tangent distributions of $\mu\times\nu$ equal the dimension of $\mu\times\nu$.

\section{Preliminaries}

Let $U\subset \R^d$ be an open and convex set. Denote by $\mathcal C_{\rm conf}(U)$ the set of injective, continuously differentiable functions $f: U\to \R^d$ such that $\overline{f(U)}\subset U$, $\frac{Df(x)}{\Vert Df(x)\Vert} \in O(d)$ for every $x\in U$, and the function $x\mapsto Df(x)$ is $\alpha$-Hölder for some $\alpha>0$. Elements of $\mathcal C_{\rm conf}(U)$ are called \emph{conformal maps} of $U$. For $d=2$, conformal maps are complex analytic, and for $d\geq 3$, they are Möbius transformations by Liouville's theorem. A function $f\in \mathcal C_{\rm conf}(U)$ is contractive if $\sup_{x\in U} \Vert Df_i(x)\Vert < 1$. For a contractive conformal map $f\in \mathcal C_{\rm conf}(U)$ with unique fixed point in $x_0 \in U$ we introduce the following notations:
\begin{alignat*}{2}
    \lambda_f(x) &:= \Vert Df(x)\Vert, \qquad &x\in U\\
    O_f(x) &:= Df(x) \Vert Df(x)\Vert^{-1}, \qquad &x\in U\\
    \lambda(f) &:= \lambda_f(x_0), \\
    O(f) &= O_f(x_0).
\end{alignat*}

Throughout the paper, $\R^d$ is equipped with the maximum metric and $B(x,t)$ denotes the closed ball centered at $x$ and of radius $t$. In particular, $B(0,1) = [-1,1]^d$. 

\subsection{Scaling sceneries} Let $\mathcal P(A)$ denote the set of Borel probability measures on a metrizable topoligical space $A$. For $\mu\in \mathcal P(B(0,1))$, $x\in {\rm spt}(\mu)$ and $t\geq 0$, let $\mu_{x,t}$ denote the probability measure on $B(0,1)$ given by 
\begin{equation*}
    \mu_{x,t}(A) = \frac{\mu(2^{-t}A + x)}{\mu(B(x,2^{-t}))}.
\end{equation*}
Weak-$^*$ accumulation points of $(\mu_{x,t})_{t\geq 0}$ are called tangent measures of $\mu$ at $x$. For $x\in {\rm spt}(\mu)$ and $T\geq 1$, define the \emph{scenery of $\mu$ up to time $T$} as
\begin{equation*}
    \langle \mu\rangle_{x,T} = \frac{1}{T}\int_0^T \delta_{\mu_{x,t}}\,dt \in\mathcal P(\mathcal P(B(0,1))).
\end{equation*}
The weak-$^*$ accumulation points of $(\langle \mu\rangle_{x,T})_{T\geq 1}$ in $\mathcal P(\mathcal P(B(0,1)))$ are called \emph{tangent distributions} of $\mu$ at $x$. If there is unique accumulation point $P$, we say that $\mu$ generates $P$ at $x$. For any $P\in \mathcal P(\mathcal P(B(0,1)))$, we define the \emph{dimension} of $P$ as 
\begin{equation*}
    \dim P := \int \dim \nu\,dP(\nu).
\end{equation*}
A Borel probability measure $P \in \mathcal P(\mathcal P(B(0,1)))$ is a \emph{fractal distribution} if $S_t P = P$ for every $t\geq 0$, and if for any $A\subset \mathcal P(B(0,1))$ with $P(A) = 1$, for $P$-almost every $\mu\in A$ and $\mu$-almost every $x\in B(0,1)$ it holds that $\mu_{x,t}\in A$ for $t\geq 0$ such that $B(x,2^{-t})\subset B(0,1)$. A fractal distribution $P$ is called \emph{ergodic} if it is ergodic under the semi-flow $(S_t)_{t\geq 0}$ on $\mathcal P(\mathcal P(B(0,1)))$.

For $t\geq 0$, let $S_t$ denote the map $\mu\mapsto \mu_{0,t}$ defined on measures $\mu$ with $0\in {\rm spt}(\mu)$. Tangent distributions are always invariant under $S_t$ for every $t\geq 0$ \cite{Hochmanpreprint}. Slightly abusing notation, for any $f:\R^d\to\R^d$ we let $fP$ denote the push-forward of $P$ under $\mu\mapsto f\mu = \mu\circ f^{-1}$. 

The following result can be extracted from \cite[Proof of Proposition 1.9]{Hochmanpreprint}.

\begin{proposition}\label{prop-imagedistribution}
    Let $\mu$ be a Radon measure on $\R^d$, $x\in\R^d$, and suppose that $\lim_{i\to\infty} \langle\mu\rangle_{x, T_i} = P$ in the weak-$^*$ topology for some tangent distribution $P$. Then, for any diffeomorphism $f: \R^d\to\R^d$ and all sufficiently large $t_0\geq 0$,
    \begin{equation*}
        \lim_{i\to\infty}\langle f\mu\rangle_{f(x), T_i} = S_{t_0} Df(x) P.
    \end{equation*}
\end{proposition}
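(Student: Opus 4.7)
The plan is to Taylor-expand $f$ around $x$ and compare, at scale $t + t_0$, the scenery of $f\mu$ at $f(x)$ with the $Df(x)$-image of the scenery of $\mu$ at $x$ at scale $t$. To this end I would first fix $t_0 \ge 0$ large enough that $Df(x)^{-1} B(0, 2^{-t_0}) \subset B(0,1)$; this is precisely the ``sufficiently large'' threshold, as it ensures that for every $\nu \in \mathcal{P}(B(0,1))$ the image $S_{t_0}(Df(x)\nu)$ is a well-defined probability measure on $B(0,1)$.

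The central calculation is to unravel both sides of the desired identity and compare. From the $C^1$-Taylor expansion
\begin{equation*}
    f^{-1}\bigl(f(x) + 2^{-(t+t_0)} y\bigr) = x + 2^{-(t+t_0)}\bigl(Df(x)^{-1} y + o(1)\bigr), \qquad y \in B(0,1),
\end{equation*}
as $t \to \infty$, a direct computation gives that $(f\mu)_{f(x), t+t_0}(A)$ is a ratio of the form $\mu(x + 2^{-(t+t_0)}(Df(x)^{-1} A + \varepsilon_t))/\mu(x + 2^{-(t+t_0)}(Df(x)^{-1} B(0,1) + \varepsilon'_t))$, where $\varepsilon_t, \varepsilon'_t$ have diameter $o(1)$ as $t \to \infty$. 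On the other hand, directly unraveling the definitions yields
\begin{equation*}
    S_{t_0}\bigl(Df(x)\mu_{x,t}\bigr)(A) = \frac{\mu\bigl(x + 2^{-(t+t_0)} Df(x)^{-1} A\bigr)}{\mu\bigl(x + 2^{-(t+t_0)} Df(x)^{-1} B(0,1)\bigr)}.
\end{equation*}
The two measures thus coincide up to an $o(1)$-Hausdorff perturbation of the domains of integration, and hence should agree in the weak-$^*$ sense as $t \to \infty$.

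I would then integrate this pointwise-in-$t$ comparison against continuous test functions on $\mathcal{P}(B(0,1))$ to conclude that $\langle f\mu\rangle_{f(x), T+t_0}$ and $(S_{t_0} \circ Df(x))_* \langle \mu\rangle_{x, T}$ have the same weak-$^*$ limits as $T \to \infty$. Combined with the elementary estimate $|\langle \mu\rangle_{x, T}(E) - \langle \mu\rangle_{x, T - t_0}(E)| = O(t_0/T)$ valid for every Borel $E \subseteq \mathcal{P}(B(0,1))$, the hypothesis $\langle \mu\rangle_{x, T_i} \to P$ would then give $\langle f\mu\rangle_{f(x), T_i} \to S_{t_0} Df(x) P$, as desired. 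That the right-hand side does not depend on the particular large $t_0$ chosen follows from the $S$-invariance of tangent distributions: since the limit $S_{t_0} Df(x) P$ is itself a tangent distribution of $f\mu$, one has $S_s S_{t_0} Df(x) P = S_{t_0 + s} Df(x) P = S_{t_0} Df(x) P$ for every $s \ge 0$.

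The main technical obstacle I anticipate is converting the pointwise $o(1)$-Hausdorff comparison of domains into weak-$^*$ convergence of the normalized $\mu$-masses, since such ratios can be unstable when the denominators become small. To handle this I would restrict to test sets $A$ whose boundaries carry negligible $\mu$-mass at typical rescaled scales, and rely on the Ces\`aro averaging in the definition of the scenery to absorb the contribution of any density-zero family of problematic scales.
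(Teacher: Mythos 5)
The paper itself offers no proof of this proposition (it is ``extracted from'' Hochman's proof of Proposition 1.9 in \cite{Hochmanpreprint}), and your Taylor-expansion-plus-Ces\`aro-averaging strategy is exactly that argument. The identity $S_{t_0}(Df(x)\mu_{x,t})(A) = \mu(x+2^{-(t+t_0)}Df(x)^{-1}A)/\mu(x+2^{-(t+t_0)}Df(x)^{-1}B(0,1))$, the comparison with $(f\mu)_{f(x),t+t_0}$ up to an $o(2^{-s})$ Hausdorff perturbation of the domains, the choice of $t_0$ with $Df(x)^{-1}B(0,2^{-t_0})\subset B(0,1)$, and the $O(t_0/T)$ reindexing are all correct.

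The one genuine gap is the step you flag at the end, and the fix you sketch does not quite target it. The instability is not in the test sets $A$ but in the two normalizing denominators $\mu(f^{-1}(B(f(x),2^{-s})))$ and $\mu(x+2^{-s}Df(x)^{-1}B(0,1))$: these are masses of sets within Hausdorff distance $o(2^{-s})$ of one another, and nothing said so far prevents their ratio from staying bounded away from $1$ along a positive-density set of scales (for instance if $\mu_{x,t}$ persistently concentrates near $Df(x)^{-1}\partial B(0,2^{-t_0})$). To run the Ces\`aro argument you must show the bad scales have density zero along $T_i$, and this requires using the hypothesis $\langle\mu\rangle_{x,T_i}\to P$ together with a property of the limit: namely that $P$-almost every $\nu$ gives zero mass to $\partial(Df(x)^{-1}B(0,2^{-t_0}))$, so that $\nu\mapsto S_{t_0}Df(x)\nu$ is continuous at $P$-a.e.\ point and the pushforward of the limit is legitimate. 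Since for a fixed $\nu$ the sets $Df(x)^{-1}\partial B(0,2^{-t})$, $t\geq 0$, are pairwise disjoint, a Fubini argument gives this for Lebesgue-almost every $t_0$, and the semigroup identity $S_sS_{t_0}=S_{s+t_0}$ combined with the $S$-invariance of the limit (your final observation, which is correct) upgrades the conclusion to all sufficiently large $t_0$. With that insertion your argument closes; without it, it is incomplete at precisely the point where the cited proof does its work.
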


The involvement of $S_{t_0}$ here is necessary to ensure that $Df(x)P$ is supported on probability measures which are supported on $[-1,1]^d$. The following result can be extracted from \cite[Proof of Proposition 3.8]{Hochmanpreprint}. We use $d_{\rm LP}$ to denote the Lévy-Prokhorov metric in the space of probability measures.

\begin{proposition}\label{prop-densitytheorem}
    Let $\mu$ and $\nu$ be non-trivial Radon measures on $\R^d$ such that $\mu\ll\nu$. Then, for $\mu$-almost every $x\in\R^d$, 
    \begin{equation*}
        \lim_{t\to\infty} d_{\rm LP}(\mu_{x,t},\nu_{x,t}) = 0.
    \end{equation*}
\end{proposition}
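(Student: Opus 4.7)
My approach is to reduce the claim to the convergence of integrals against continuous test functions on $[-1,1]^d$ and then to invoke a Lebesgue differentiation argument. First I would let $f := d\mu/d\nu$ be the Radon--Nikodym derivative; since $\mu$ is non-trivial and $\mu\ll\nu$, it holds that $f(x) > 0$ for $\mu$-almost every $x$. Applying the Besicovitch differentiation theorem to the Radon measure $\nu$ on $\R^d$ equipped with the maximum metric (whose closed balls are cubes) will give that $\nu$-almost every, and hence $\mu$-almost every, $x$ is a Lebesgue point of $f$, meaning that
\begin{equation*}
    \frac{1}{\nu(B(x,r))} \int_{B(x,r)} |f(y) - f(x)| \dd\nu(y) \xrightarrow[r\to 0]{} 0.
\end{equation*}

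Next, I would fix such a good $x$ and a continuous $\phi : [-1,1]^d \to \R$ with $\|\phi\|_\infty \leq 1$, and set $\phi_t(y) := \phi(2^t(y-x))$ for $y \in B(x,2^{-t})$ and $\phi_t(y) := 0$ otherwise. A change of variables identifies
\begin{equation*}
    \int \phi \dd\mu_{x,t} = \frac{\int \phi_t f \dd\nu}{\int_{B(x,2^{-t})} f \dd\nu}, \qquad \int \phi \dd\nu_{x,t} = \frac{\int \phi_t \dd\nu}{\nu(B(x,2^{-t}))}.
\end{equation*}
Dividing the first fraction above and below by $\nu(B(x,2^{-t}))$, splitting $f = f(x) + (f - f(x))$, and using $|\phi_t|\leq 1$ together with the Lebesgue point property, the numerator will become $f(x)\int \phi \dd\nu_{x,t} + o(1)$ and the denominator $f(x) + o(1)$ as $t\to\infty$. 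Since $f(x) > 0$, this yields $\int \phi \dd\mu_{x,t} - \int \phi \dd\nu_{x,t} \to 0$.

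Finally, I would choose a countable dense subset $(\phi_k)_{k\geq 1}$ of $C([-1,1]^d)$ and intersect the resulting full-measure sets to obtain a single $\mu$-conull set on which the convergence above holds for every $\phi_k$; a uniform-boundedness approximation then extends it to every $\phi \in C([-1,1]^d)$. To upgrade test-integral convergence to $d_{\rm LP}$-convergence I would argue by contradiction: if $d_{\rm LP}(\mu_{x,t},\nu_{x,t})$ failed to tend to $0$, compactness of $\mathcal P([-1,1]^d)$ would allow me to extract $t_i\to\infty$ along which both $\mu_{x,t_i}$ and $\nu_{x,t_i}$ converge weakly, but the previous step would force their weak limits to coincide, contradicting the positive lower bound on $d_{\rm LP}$. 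The only mildly delicate point is the applicability of the Besicovitch differentiation theorem with cubes as the differentiation basis, which is a standard consequence of the Besicovitch covering theorem for symmetric convex bodies of bounded eccentricity.
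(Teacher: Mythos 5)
Your argument is correct, and it is essentially the standard density-point proof that the paper itself does not reproduce but outsources to the proof of Proposition 3.8 in Hochman's preprint: write $\mu = f\,\nu$, use Besicovitch differentiation to make $f$ approximately constant (and positive) on small cubes around $\mu$-a.e.\ $x$, compare the two normalized blow-ups against continuous test functions, and upgrade to $d_{\rm LP}$-convergence via compactness of $\mathcal P([-1,1]^d)$. The only cosmetic remark is that the intersection over a countable dense family $(\phi_k)$ is not needed, since the single conull set of Lebesgue points with $f(x)>0$ already yields the test-function convergence for every $\phi$ simultaneously, with the error controlled by $\|\phi\|_\infty$.
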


For a Borel probability measure $\mu$ on $\R^d$ and $x\in {\rm spt}(\mu)$, let $TD(\mu,x)\subset \mathcal P(\mathcal P(B(0,1)))$ denote the set of tangent distributions of $\mu$ at $x$. It turns out that tangent distributions of a measure capture its local dimensions at almost every point. For every $x\in \R^d$, define
\begin{align*}
    \overline{\dim}_{\rm loc}\mu(x) := \limsup_{r\to 0}\frac{\log\mu(B(x,r))}{\log r},\quad \underline{\dim}_{\rm loc}\mu(x) := \liminf_{r\to0} \frac{\log\mu(B(x,r))}{\log r}.
\end{align*}
When these quantities agree almost everywhere and their common value is a constant, $\mu$ is called exact-dimensional.
\begin{proposition}\label{prop-localdimensions}
Let $\mu$ be a Borel probability measure on $\R^d$. Then for $\mu$-almost every $x$, 
    \begin{align*}
        \overline{\dim}_{\rm loc}\mu(x) &= \sup\lbrace \dim P:\ P \in TD(\mu,x)\rbrace, \\
        \underline{\dim}_{\rm loc}\mu(x) &= \inf \lbrace \dim P:\ P\in TD(\mu,x)\rbrace.
    \end{align*}
\end{proposition}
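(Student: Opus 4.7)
My plan is to link tangent-distribution dimensions to local measure ratios via a telescoping identity, then invoke the standard dimension formula for fractal distributions, and finally use compactness of $\mathcal{P}(\mathcal{P}(B(0,1)))$ to realize the extreme values.

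The starting observation is that $\mu_{x,t}(B(0,1/2)) = \mu(B(x, 2^{-t-1}))/\mu(B(x, 2^{-t}))$, which after a change of variable in the integral $\int_0^T -\log \mu_{x,t}(B(0, 1/2))\, dt$ gives
\begin{equation*}
    \int_0^T -\log \mu_{x,t}(B(0, 1/2))\, dt = \int_0^1 \log \mu(B(x, 2^{-s}))\, ds - \int_T^{T+1} \log \mu(B(x, 2^{-s}))\, ds.
\end{equation*}
Dividing by $T\log 2$, the first summand on the right is $O(1/T)$, and bracketing the second summand via monotonicity of $s \mapsto \mu(B(x, 2^{-s}))$ shows that the average $\tfrac{1}{T\log 2}\int_0^T -\log \mu_{x,t}(B(0, 1/2))\, dt$ has the same $\limsup$ and $\liminf$ as the local dimension ratio $\log \mu(B(x, 2^{-T}))/\log 2^{-T}$ as $T\to\infty$.

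Next I would invoke Hochman's theorem that at $\mu$-almost every $x$, every $P \in TD(\mu, x)$ is in fact a fractal distribution, together with the Shannon--McMillan--Breiman-type dimension formula
\begin{equation*}
    \dim P = -\frac{1}{\log 2}\int \log \nu(B(0, 1/2))\, dP(\nu),
\end{equation*}
obtained by applying Birkhoff's theorem to the scenery semiflow $(S_t)$ and reducing to the ergodic case via the ergodic decomposition. If $\langle \mu\rangle_{x, T_i} \to P$ weakly for some $T_i \to \infty$, then passing to the limit in the scenery identity (modulo the semi-continuity issue addressed below) yields $\dim P = \lim_i \log \mu(B(x, 2^{-T_i}))/\log 2^{-T_i}$, which automatically lies in $[\underline{\dim}_{\rm loc}\mu(x), \overline{\dim}_{\rm loc}\mu(x)]$. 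This establishes both $\sup\{\dim P : P \in TD(\mu, x)\} \leq \overline{\dim}_{\rm loc}\mu(x)$ and $\inf\{\dim P : P \in TD(\mu, x)\} \geq \underline{\dim}_{\rm loc}\mu(x)$.

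For the reverse inequalities, I would choose a subsequence $T_i \to \infty$ along which $\log \mu(B(x, 2^{-T_i}))/\log 2^{-T_i}$ tends to $\overline{\dim}_{\rm loc}\mu(x)$ (respectively $\underline{\dim}_{\rm loc}\mu(x)$), extract a weak-$^*$ convergent subsequence of the sceneries $\langle \mu\rangle_{x, T_i}$ by compactness of $\mathcal{P}(\mathcal{P}(B(0,1)))$, and apply the previous step to the limiting tangent distribution. The main obstacle I anticipate is the semi-continuity of $\nu \mapsto \log \nu(B(0, 1/2))$ in the weak-$^*$ topology, which prevents a naive limit-exchange in the scenery integrals; I would resolve this via the standard smoothing trick, replacing $\mathbf{1}_{B(0, 1/2)}$ by a continuous bump function and letting the bump tend to the indicator, with the resulting error terms involving the $\mu$-mass of thin shells near spheres $\partial B(x, 2^{-t})$ which vanish on average in $t$ by a Fubini-type argument.
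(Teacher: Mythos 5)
The paper does not actually prove this proposition; it only cites \cite[Proposition 1.19]{Hochmanpreprint} and the remark after Theorem 3.21 of \cite{KaenmakiSahlstenShmerkin2015GMT}, so your write-up is in effect a reconstruction of the proof of the cited results. The telescoping identity is correct, and the overall architecture (Birkhoff along the scenery flow, compactness of $\mathcal P(\mathcal P(B(0,1)))$ to realize the extreme values) is the right one. There is, however, a genuine gap in the limit-exchange step, and it affects exactly the two ``hard'' halves of the statement.

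The function $f(\nu)=-\log\nu(B(0,1/2))$ is nonnegative and lower semicontinuous (as $B(0,1/2)$ is closed), so the portmanteau theorem gives $\int f\,dP\le\liminf_i\int f\,d\langle\mu\rangle_{x,T_i}$ for free; this yields $\sup_P\dim P\le\overline{\dim}_{\rm loc}\mu(x)$, and, applied along a sequence $T_i$ realizing the liminf of the ratios $\log\mu(B(x,2^{-T}))/\log 2^{-T}$, also $\inf_P\dim P\le\underline{\dim}_{\rm loc}\mu(x)$. The remaining two inequalities ($\inf_P\dim P\ge\underline{\dim}_{\rm loc}\mu(x)$ for \emph{every} tangent distribution, and the existence of $P$ with $\dim P\ge\overline{\dim}_{\rm loc}\mu(x)$) require the reverse bound $\int f\,dP\ge\limsup_i\int f\,d\langle\mu\rangle_{x,T_i}$, and here your smoothing trick is not sufficient: replacing $\mathbf{1}_{B(0,1/2)}$ by a continuous bump cures the discontinuity of the indicator but not the unboundedness of the logarithm. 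You must rule out that a vanishing fraction of times $t\in[0,T]$ at which $\mu_{x,t}(B(0,1/2))$ is extremely small carries a non-negligible part of the time average, i.e.\ you need the uniform integrability statement $\lim_{M\to\infty}\limsup_{T\to\infty}\frac1T\int_0^T f(\mu_{x,t})\mathbf{1}_{\{f(\mu_{x,t})>M\}}\,dt=0$. This can fail at individual points and is precisely where the ``$\mu$-almost every $x$'' in the statement is consumed: in the cited references it is established via a maximal-function/Besicovitch-type estimate, and it is a lemma, not a routine technicality. A secondary point: the identity $\dim P=-\frac1{\log2}\int\log\nu(B(0,1/2))\,dP(\nu)$ does not follow from Birkhoff's theorem alone. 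Birkhoff identifies the time average with the local dimension of $\nu$ at the single point $0$; to upgrade this to $\dim\nu$ (an essential infimum over $\nu$-a.e.\ point) you need the quasi-Palm property of fractal distributions together with the exact-dimensionality of $P$-typical measures, as in Theorem \ref{thm-hochmanrigidity}(3). Both issues are repairable, but they constitute the substantive content of the cited proofs.
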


See \cite[Proposition 1.19]{Hochmanpreprint} and \cite[Remark after Theorem 3.21]{KaenmakiSahlstenShmerkin2015GMT}. The following is a consequence of the main result of \cite{HochmanShmerkin2012}, and the formulation below follows from \cite[Theorem 1.23]{Hochmanpreprint}.

\begin{theorem}\label{thm-hochmanprojections}
    Let $\mu$ be a Borel probability measure on $\R^d$. Then for every linear map $\pi$, 
    \begin{equation*}
        \dim \pi\mu \geq \essinf_{x\sim \mu} \inf_{P \in TD(\mu,x)} \dim \pi P = \essinf_{x\sim \mu} \inf_{P \in TD(\mu,x)} \int \dim \pi \nu\,dP(\nu).
    \end{equation*}
\end{theorem}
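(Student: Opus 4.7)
The plan is to derive this inequality from the local entropy averages method developed by Hochman and Shmerkin, packaged in the language of the scenery flow. The intuition is that the lower local dimension of $\pi\mu$ at $\pi(x)$ is controlled by an ergodic-type average of entropies at the scenery scales, and those averages converge along convergent sub-sceneries to an integral against the corresponding tangent distribution.

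The first step is the elementary entropy--dimension inequality: for any Borel probability measure $\rho$ on $\R^k$, any $x\in\spt\rho$, and any sufficiently small $s>0$,
\begin{equation*}
    \underline\dim_{\rm loc}\rho(x)\;\geq\;\liminf_{T\to\infty}\frac{1}{T}\int_0^T \frac{1}{s}\,H\bigl(\rho_{x,t},\mathcal{D}_s\bigr)\,\dd t,
\end{equation*}
where $\mathcal{D}_s$ is the level-$s$ dyadic partition of $[-1,1]^k$ and $H(\cdot,\mathcal{D}_s)$ is its Shannon entropy. This follows by telescoping the conditional entropies of $\rho$ on nested dyadic scales around $x$ and invoking the elementary fact $\rho(B(x,2^{-t}))\geq H$-style lower bounds. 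Applying this with $\rho = \pi\mu$ at a typical point $\pi(x)$, and using that $(\pi\mu)_{\pi(x),t}$ differs from $\pi(\mu_{x,t})$ only by a harmless rescaling (since $\pi$ is linear), the right-hand side becomes $\liminf_T \tfrac{1}{T}\int_0^T \tfrac{1}{s}H(\pi\mu_{x,t},\mathcal{D}_s)\,\dd t$.

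The second step is to evaluate this lim inf along a subsequence $T_i\to\infty$ for which $\langle\mu\rangle_{x,T_i}\to P$ weak-$^*$ with $P\in TD(\mu,x)$. The functional $\nu\mapsto \tfrac{1}{s}H(\pi\nu,\mathcal{D}_s)$ is bounded and, modulo the boundary effects of the partition, continuous; consequently
\begin{equation*}
    \frac{1}{T_i}\int_0^{T_i}\frac{1}{s}H(\pi\mu_{x,t},\mathcal{D}_s)\,\dd t\;\longrightarrow\;\int \frac{1}{s}H(\pi\nu,\mathcal{D}_s)\,\dd P(\nu).
\end{equation*}
Now one lets $s\to 0$. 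Here one uses that $P$ is (an average of) a fractal distribution so that $\tfrac{1}{s}H(\pi\nu,\mathcal{D}_s)\to \dim\pi\nu$ for $P$-a.e. $\nu$ by Proposition \ref{prop-localdimensions} applied to the projected measure; dominated convergence (the integrand is uniformly bounded by $k$) gives the limit $\int \dim\pi\nu\,\dd P(\nu)=\dim\pi P$. Combining the two steps yields $\underline\dim_{\rm loc}\pi\mu(\pi(x))\geq \dim\pi P$ at $\mu$-a.e.\ $x$ for every $P\in TD(\mu,x)$, and then taking infima and the essential infimum over $x$ gives the theorem, since $\dim\pi\mu = \essinf_{x\sim\mu}\underline\dim_{\rm loc}\pi\mu(\pi(x))$.

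The main obstacle is the exchange of the $T\to\infty$ and $s\to 0$ limits, compounded by the fact that $H(\cdot,\mathcal{D}_s)$ is only upper semicontinuous because of the dyadic boundaries. The standard remedy is to average the dyadic grid over a small random translation (or, equivalently, to replace $\mathcal{D}_s$ by a smooth partition of unity at scale $2^{-s}$), which makes the entropy a continuous function of the measure while changing it by at most $O(1)$; invariance of $P$ under the scaling semi-flow $(S_t)$ then ensures that this $O(1)$ error is dominated by the factor $1/s$ in the limit $s\to 0$, so the averaged and unaveraged expressions have the same limit. With that issue handled, the argument reduces to the plain Birkhoff/Fatou manipulations outlined above.
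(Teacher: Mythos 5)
The paper itself does not prove this statement; it is imported from \cite[Theorem 1.23]{Hochmanpreprint} and \cite{HochmanShmerkin2012}, whose proof is indeed the local entropy averages argument you outline, so your overall strategy matches the source. However, there is a genuine gap at the pivotal point of Step 1. You pass from the entropy averages of the sceneries $(\pi\mu)_{\pi(x),t}$ of the \emph{projected} measure to those of the projected sceneries $\pi(\mu_{x,t})$ by asserting that the two ``differ only by a harmless rescaling''. This is false: $(\pi\mu)_{\pi(x),t}$ is obtained by conditioning $\mu$ on the slab $\pi^{-1}(B(\pi(x),2^{-t}))$ and then projecting, whereas $\pi(\mu_{x,t})$ conditions $\mu$ on the cube $B(x,2^{-t})\subset\R^d$ before projecting. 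The slab contains the entire fibre of $\mu$ over $B(\pi(x),2^{-t})$, and the two measures --- and their scale-$s$ entropies --- can be completely different when the conditional measures of $\mu$ on nearby cubes in the same slab differ from the one through $x$. Bridging exactly this discrepancy is the content of the Hochman--Shmerkin local entropy averages lemma: by concavity of entropy the slab-conditional measure is a convex combination of the cube-conditional measures, so its projected entropy dominates the weighted average of theirs, and a law of large numbers for martingale differences then replaces that weighted average by the value at the cube containing $x$, for $\mu$-a.e.\ $x$. The outcome is the inequality
\begin{equation*}
    \underline{\dim}_{\rm loc}(\pi\mu)(\pi(x)) \;\geq\; \liminf_{T\to\infty}\frac{1}{T}\int_0^T \frac{1}{s}\,H\bigl(\pi(\mu_{x,t}),\mathcal{D}_s\bigr)\,\dd t \;-\; O(1/s),
\end{equation*}
which is what your argument needs, but it is a nontrivial lemma and not a change of variables; your telescoping only produces the analogous statement with the conditional measures of $\pi\mu$ on dyadic cells of $\R^k$, which are the wrong objects.

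Two smaller points. The inequality in your first display holds only at $\rho$-typical points (it rests on the martingale LLN), not for every $x\in\spt\rho$; this is harmless since you only apply it at typical points, but it should be stated that way. And in the limit $s\to 0$, the fact you need is that for a fractal distribution $P$ one has $\frac{1}{s}\int H(\pi\nu,\mathcal{D}_s)\,\dd P(\nu)\to\int\dim\pi\nu\,\dd P(\nu)$ (Hochman's Lemma 1.18, i.e.\ Theorem \ref{thm-hochmanrigidity}(4)-type regularity), together with a justification for interchanging this limit with the infimum over $P\in TD(\mu,x)$ --- e.g.\ compactness of $TD(\mu,x)$ and lower semicontinuity of $P\mapsto\int\dim\pi\nu\,\dd P(\nu)$ on fractal distributions. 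Proposition \ref{prop-localdimensions}, which you cite for this step, concerns local dimensions of $\mu$ versus its tangent distributions and does not supply either ingredient.
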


In the following theorem we collect some additional properties of fractal distributions.

\begin{theorem}\label{thm-hochmanrigidity}
    Let $\mu$ be a Borel probability measure on $\R^d$, and let $P\in\mathcal P(\mathcal P(B(0,1)))$ be a fractal distribution. Let $P = \int Q\,dP'(Q)$ denote the ergodic decomposition of $P$ with respect to $(S_t)_{t\geq 0}$. 
    \begin{enumerate}
        \item For $\mu$-almost every $x\in \R^d$, every tangent distribution of $\mu$ at $x$ is a fractal distribution.
        \item $P'$-almost every $Q$ is an ergodic fractal distribution.
        \item $P$-almost every measure $\nu$ is exact-dimensional. Moreover, there exists an ergodic component $Q$ such that for $\nu$-almost every $x\in \R^d$, $\langle \nu\rangle_{x,T} \to Q$ as $T\to \infty$. 
        \item For $P'$-almost every $Q$ and any linear map $\pi$, the functions $\nu\mapsto \dim \nu$ and $\nu\mapsto \dim\pi\nu$ are constant $Q$-almost everywhere. 
    \end{enumerate}
\end{theorem}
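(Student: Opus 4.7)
The plan is to prove each of the four items, leveraging that $P$ is by definition $S_t$-invariant together with a quasi-Palm property, and that the scaling semi-flow $(S_t)_{t\geq 0}$ on $\mathcal P(\mathcal P(B(0,1)))$ admits an ergodic decomposition. These results are essentially due to Hochman; my plan is to indicate the main mechanisms rather than reprove them in full.

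For (1), I would argue that any weak-$^*$ accumulation point $P$ of the Ces\`aro averages $\langle\mu\rangle_{x,T}$ is automatically $S_t$-invariant: given $s\geq 0$, the difference $\frac{1}{T}\int_0^T (S_s\delta_{\mu_{x,t}}-\delta_{\mu_{x,t}})\,dt$ is bounded in total variation by $2s/T$, which vanishes as $T\to\infty$. The harder content is the quasi-Palm property. Here I would follow Hochman's approach: for a typical $x\sim\mu$ one shows by a Fubini/Vitali-type argument that the scenery satisfies an asymptotic self-reproducing property, namely that re-centering a generic scale of a typical scenery-measure at a typical point of its support again yields a scenery-limit of $\mu$. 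Passing this approximate equivariance to the limit gives the defining quasi-Palm identity.

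For (2), I would note that the $S_t$-invariance of $P'$-a.e. ergodic component is automatic from the $(S_t)_{t\geq 0}$-ergodic decomposition, while the quasi-Palm property is preserved because it can be phrased as an integral identity of the form $\int F\,dP = \int\int F(\nu_{x,t})\,d\nu(x)\,dP(\nu)$ for all bounded measurable $F$ and all $t\geq 0$ in the appropriate range. Disintegrating this identity against the ergodic decomposition and applying Fubini then forces the same identity to hold for $P'$-a.e. component $Q$. Item (4) is a direct consequence of ergodicity: the maps $\nu\mapsto\dim\nu$ and $\nu\mapsto\dim\pi\nu$ are $(S_t)_{t\geq 0}$-invariant (rescaling at $0$ cannot change local-dimension statistics), so on $P'$-a.e. ergodic component $Q$ they are $Q$-a.s.\ constant.

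The main work is in (3). I would apply Birkhoff's pointwise ergodic theorem to the semi-flow $(S_t)_{t\geq 0}$ acting on the ergodic component $Q$, with the integrable observable $f(\eta)=-\log\eta(B(0,1/2))$. On the one hand, Birkhoff gives that for $Q$-a.e.\ $\eta$, $\frac{1}{T}\int_0^T f(S_t\eta)\,dt \to \int f\,dQ$, while an elementary computation identifies the left-hand side along integer times with $\frac{1}{T}\log\eta(B(x,2^{-T}))$ evaluated after recentering; combined with the quasi-Palm property applied at a $Q$-typical $\eta$ this gives exact-dimensionality of $Q$-a.e.\ $\eta$ with dimension equal to $\int f\,dQ/\log 2$. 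The second half of (3) is the generic-point statement: for $Q$-a.e.\ $\eta$ and $\eta$-a.e.\ $x$, $\langle\eta\rangle_{x,T}\to Q$. This again uses Birkhoff, now applied simultaneously to a countable dense family of test functions in $C(\mathcal P(B(0,1)))$, combined with the quasi-Palm identity to transfer the ergodic average from $S_t\eta$ (base-point $0$) to $\eta_{x,t}$ for a typical $x\in\spt(\eta)$. The hardest point is this last transfer, where one must carefully invoke the quasi-Palm property to pass from centered sceneries at the origin to sceneries at typical points, which is precisely where the quasi-Palm axiom earns its keep.
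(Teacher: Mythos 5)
The paper does not actually prove this theorem: all four items are quoted directly from Hochman's work (\cite[Theorems 1.3, 1.6, 1.7 and Lemma 1.18]{Hochmanpreprint}), so you are attempting to reconstruct proofs that the paper deliberately outsources. Your outlines for (1), (3) and (4) gesture at the right mechanisms: Ces\`aro averaging gives $S_t$-invariance of tangent distributions essentially for free, Birkhoff applied to $\eta\mapsto-\log\eta(B(0,1/2))$ together with the telescoping identity and the quasi-Palm transfer from $0$ to $\eta$-typical $x$ is indeed how exact dimensionality is obtained, and (4) follows from ergodicity once one checks that $\nu\mapsto\dim\pi(S_t\nu)=\dim\pi(\nu|_{B(0,2^{-t})})\geq\dim\pi\nu$, so the function is sub-invariant and hence a.e.\ invariant. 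These are sketches rather than proofs (in particular the quasi-Palm property for tangent distributions in (1) is a substantial theorem, not a ``Fubini/Vitali-type argument''), but they are not wrong in direction.

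Item (2), however, contains a genuine gap. The quasi-Palm property, as defined in this paper, is \emph{not} an integral identity $\int F\,dP=\iint F(\nu_{x,t})\,d\nu(x)\,dP(\nu)$; it is the statement that every $P$-full set $A$ remains full for the re-centered measures, i.e.\ an absolute-continuity (null-set preservation) condition between $P$ and its Palm-type companion, not an equality of distributions. Such a condition does not disintegrate over the ergodic decomposition by Fubini: a set $A$ with $Q(A)=1$ for a single ergodic component $Q$ may have $P(A)<1$, so the quasi-Palm property of $P$ says nothing about it, and this is exactly where your argument breaks. Hochman's actual proof of this item goes through the correspondence between restricted fractal distributions and genuinely scale-and-translation-invariant distributions on Radon measures (the ``extended version''), for which invariance is an honest measure-preservation statement that is compatible with ergodic decomposition; the claim is then transported back. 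If you want to prove (2) you must take that route (or an equivalent one), not the Fubini shortcut.
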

\begin{proof}
    Claim (1) is \cite[Theorem 1.7]{Hochmanpreprint}, claim (2) is \cite[Theorem 1.3]{Hochmanpreprint}, claim (3) is \cite[Theorem 1.6]{Hochmanpreprint} and claim (4) is \cite[Lemma 1.18]{Hochmanpreprint}.
\end{proof}

If $\nu\in\mathcal P(\mathcal C_{\rm conf}((-1,1)^d))$ and $\mu\in\mathcal P((-1,1)^d)$, we define their \emph{generalized convolution} $\nu\cdot\mu$ by 
\begin{equation*}
    \nu\cdot\mu(A) = \int h\mu(A) \,d\nu(h).
\end{equation*}

The following description for tangent measures of self-conformal measures was provided in \cite{BaranyKaenmakiPyoralaWu2023}.

\begin{lemma}[Lemma 3.5 of \cite{BaranyKaenmakiPyoralaWu2023}]\label{lemma-tangentstructure}
    Let $\mu$ be a self-conformal measure on $\R^d$. For $\mu$-almost every $x$ and every tangent distribution $P$ at $x$, for $P$-almost every $\eta$, there exists a measure $\gamma\in\mathcal P(\mathcal C_{\rm conf}((-1,1)^d))$ such that $\eta = (\gamma\cdot\mu)_{B(0,1)}$. 
\end{lemma}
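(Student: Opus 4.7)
The central tool is the self-conformal fixed-point relation, iterated to arbitrary generation: for any maximal antichain $\Lambda \subset \bigcup_n \Gamma^n$ of finite words,
\[
\mu \ =\ \sum_{\mathbf i \in \Lambda} p_{\mathbf i}\, f_{\mathbf i}\mu,
\]
with $p_{\mathbf i} = p_{i_1}\cdots p_{i_n}$ and $f_{\mathbf i} = f_{i_1}\circ\cdots\circ f_{i_n}$. The plan is to use a scale-adapted stopping set to write each rescaled measure $\mu_{x,t}$ as a generalized convolution of $\mu$ with a discrete measure on conformal maps, and then to pass to weak-$^*$ limits to extract the structure of $P$-typical $\eta$.

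Fix a small $\epsilon > 0$. For each $t \geq 0$, let $\Lambda_t$ consist of cylinders with contraction $\lambda(f_{\mathbf i}) \asymp \epsilon\cdot 2^{-t}$. By bounded distortion for conformal maps, $\diam f_{\mathbf i}((-1,1)^d) \lesssim \epsilon\cdot 2^{-t}$. Applying the affine rescaling $T_{x,t}(y) := 2^t(y-x)$ produces conformal maps $h_{\mathbf i}^{(t)} := T_{x,t}\circ f_{\mathbf i}$ on $(-1,1)^d$ with derivatives of norm of order $\epsilon$. Keeping only those $\mathbf i \in \Lambda_t$ whose image $f_{\mathbf i}((-1,1)^d)$ is compactly contained in $B(x,2^{-t})$ -- call this subfamily $\Lambda_t'$ -- ensures $\overline{h_{\mathbf i}^{(t)}((-1,1)^d)} \subset (-1,1)^d$, so that $h_{\mathbf i}^{(t)}\in\mathcal C_{\rm conf}((-1,1)^d)$. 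Setting $\gamma_t := c_t\sum_{\mathbf i\in \Lambda_t'} p_{\mathbf i}\,\delta_{h_{\mathbf i}^{(t)}}$ with $c_t = 1/\mu(B(x,2^{-t}))$ gives
\[
\mu_{x,t} \ =\ (\gamma_t\cdot\mu)_{B(0,1)} + \mathrm{err}(t),
\]
where the error collects contributions from the discarded boundary cylinders and is supported in an annulus of width $O(\epsilon)$ around $\partial B(0,1)$.

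Now let $P$ be a tangent distribution at a $\mu$-typical $x$, with $\langle\mu\rangle_{x,T_j}\to P$. The scenery $\langle\mu\rangle_{x,T_j}$ is the time-average of $\delta_{\mu_{x,t}}$ over $t\in[0,T_j]$. By Arzel\`a-Ascoli, the family $\{h_{\mathbf i}^{(t)}\}$ is relatively compact in the $C^{1,\alpha}$-topology on $\mathcal C_{\rm conf}((-1,1)^d)$; hence $(\gamma_t)_t$ is tight in $\mathcal P(\mathcal C_{\rm conf}((-1,1)^d))$, and the map $\gamma\mapsto(\gamma\cdot\mu)_{B(0,1)}$ is weak-$^*$ continuous on this tight family. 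Consequently, modulo the boundary error, the limit $P$ is concentrated on measures of the form $(\gamma\cdot\mu)_{B(0,1)}$. For $\mu$-typical $x$, the total expected mass of $\mathrm{err}(t)$ along $[0,T_j]$ is $O(\epsilon^{\dim\mu})$ by a density-point argument (Proposition \ref{prop-densitytheorem} applied with $\nu$ equal to the restriction of $\mu$ to the annulus), so a diagonal sequence $\epsilon_k \to 0$ upgrades the approximate identity to the exact one claimed for $P$-a.e. $\eta$.

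The main technical obstacle is the tension between two competing needs: discarding boundary cylinders to obtain the strict compact containment required for $h_{\mathbf i}^{(t)}\in\mathcal C_{\rm conf}((-1,1)^d)$, versus proving that the discarded mass vanishes in the limit. Bounded distortion for conformal maps (Koebe-type estimates), together with density-point regularity of $\mu$ at a $\mu$-typical $x$, is what resolves this in the $\epsilon\to 0$ limit; the $\mu$-typicality of $x$ enters exclusively through this density-point estimate.
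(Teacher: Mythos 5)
The paper does not prove this lemma at all: it is imported verbatim as \cite[Lemma 3.5]{BaranyKaenmakiPyoralaWu2023}, so your attempt has to be judged against the argument in that reference. Your overall architecture is the right one and matches it in spirit: stop the iterated self-conformal identity $\mu=\sum_{\mathbf i\in\Lambda_t}p_{\mathbf i}f_{\mathbf i}\mu$ at cylinders of contraction comparable to $\epsilon 2^{-t}$, conjugate by the renormalization $T_{x,t}$ to obtain a discrete measure $\gamma_t$ on conformal maps with $\mu_{x,t}\approx(\gamma_t\cdot\mu)_{B(0,1)}$, and pass to weak-$^*$ limits using bounded distortion to get normality of the family of rescaled maps.

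The genuine gap is the error control. You discard the cylinders meeting $\partial B(x,2^{-t})$ and assert that the discarded relative mass is $O(\epsilon^{\dim\mu})$ on average ``by a density-point argument'' via Proposition \ref{prop-densitytheorem}. That proposition compares $\mu_{x,t}$ with $\nu_{x,t}$ for a \emph{fixed} measure $\nu\ll\mu$; your annulus is the $t$-dependent set $B(x,2^{-t})\setminus B(x,(1-C\epsilon)2^{-t})$, so there is no fixed $\nu$ to feed into it. More to the point, the quantity you must bound is $1-\mu(B(x,(1-C\epsilon)2^{-t}))/\mu(B(x,2^{-t}))$, and exact dimensionality ($\mu(B(x,r))=r^{\dim\mu+o(1)}$) gives no control whatsoever on ratios of measures of concentric balls of comparable radii: self-conformal measures need not be doubling, and for individual times $t$ a fixed proportion of the mass of $B(x,2^{-t})$ can sit in that thin annulus. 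The clean repair is to discard nothing: put into $\Lambda_t'$ every $\mathbf i\in\Lambda_t$ with $f_{\mathbf i}(K)\cap B(x,2^{-t})\neq\emptyset$, where $K={\rm spt}(\mu)$. Then $\mu|_{B(x,2^{-t})}=\sum_{\mathbf i\in\Lambda_t'}p_{\mathbf i}(f_{\mathbf i}\mu)|_{B(x,2^{-t})}$ holds exactly, hence $\mu_{x,t}=(\gamma_t\cdot\mu)_{B(0,1)}$ with no error term at all; the only cost is that the boundary maps $h_{\mathbf i}^{(t)}$ overshoot $(-1,1)^d$ by $O(\epsilon)$, which is harmless since the restriction to $B(0,1)$ is taken after the convolution and the overshoot vanishes in the $\epsilon\to0$ diagonal limit. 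A secondary point you gloss over: restriction to $B(0,1)$ followed by normalization is not weak-$^*$ continuous unless the limit measure gives zero mass to $\partial B(0,1)$ and positive mass to the interior, so the continuity of $\gamma\mapsto(\gamma\cdot\mu)_{B(0,1)}$ at the limiting $\gamma$ must be justified for $P$-a.e.\ $\eta$ (e.g.\ via the fractal-distribution properties of Theorem \ref{thm-hochmanrigidity}, or by working with $B(0,r)$ for a generic radius $r$). With these two repairs your argument goes through.
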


The following proposition is a corollary of \cite[Proposition 3.4]{BaranyKaenmakiPyoralaWu2023} and allows us to relate the sceneries of tangents of $\mu$ back to the sceneries of $\mu$. 

\begin{proposition}\label{prop-mainprop}
    Let $\nu\in\mathcal P(\mathcal C_{\rm conf}((-1,1)^d))$ and $\mu\in\mathcal P((-1,1)^d)$ be such that the measures $\mu$ and $\nu\cdot \mu$ are exact-dimensional, and $\dim \nu\cdot\mu = \dim \mu>0$. Then 
    \begin{equation*}
        \lim_{n\to\infty} \iint \frac{1}{n}\int_1^n d_{\rm LP}((\nu\cdot\mu)_{y,t}, (h_*\mu)_{y,t})\,dt\,dh\mu(y) \,d\nu(h) =0.
    \end{equation*}
\end{proposition}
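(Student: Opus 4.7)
The plan is to derive Proposition \ref{prop-mainprop} as a direct consequence of \cite[Proposition 3.4]{BaranyKaenmakiPyoralaWu2023}, together with a bounded convergence argument. The key observation is that $\nu\cdot\mu = \int h\mu\,d\nu(h)$ realizes the convolution as a mixture of the pushforwards $h\mu$, each of which inherits the dimension of $\mu$ since $h$ is bi-Lipschitz. The hypothesis $\dim \nu\cdot\mu = \dim \mu$ then ensures that the mixing does not increase dimension, which geometrically means the family $\{h\mu\}_{h\in \spt(\nu)}$ is aligned at small scales around typical points.

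A naive attempt to apply Proposition \ref{prop-densitytheorem} directly fails because one does not have absolute continuity $h\mu \ll \nu\cdot\mu$ in general: for instance, if $\nu$ is a continuous family of translations and $\mu$ has a singular component, the measures $h\mu$ are pairwise mutually singular. What salvages the argument in the present setting is the exact-dimensional and equal-dimension hypothesis, which forces the ratio $h\mu(B(y,r)) / (\nu\cdot\mu)(B(y,r))$ to behave regularly as $r\to 0$ at $h\mu$-almost every $y$. This is the content of \cite[Proposition 3.4]{BaranyKaenmakiPyoralaWu2023}, from which one extracts
\begin{equation*}
    \frac{1}{n}\int_1^n d_{\rm LP}\bigl((h\mu)_{y,t}, (\nu\cdot\mu)_{y,t}\bigr)\,dt \longrightarrow 0 \quad \text{as } n\to\infty
\end{equation*}
for $\nu$-almost every $h$ and $h\mu$-almost every $y$.

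Finally, since the Lévy--Prokhorov metric is bounded by $1$, bounded convergence applied to the joint probability measure $d\nu(h)\,dh\mu(y)$ on $\mathcal C_{\rm conf}((-1,1)^d)\times (-1,1)^d$ immediately upgrades the pointwise convergence above to convergence of the triple integral in the statement. The main technical obstacle is the substitute for absolute continuity in the non-dominated mixture setting; this is absorbed into the cited result, whose proof uses the dimension-preservation hypothesis to compare local densities across the family $\{h\mu\}_{h\in \spt(\nu)}$, going beyond the setting of Proposition \ref{prop-densitytheorem}. Once that ingredient is imported, the deduction of Proposition \ref{prop-mainprop} is essentially Fubini plus bounded convergence.
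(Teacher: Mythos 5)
There is a genuine gap: you treat \cite[Proposition 3.4]{BaranyKaenmakiPyoralaWu2023} as a black box that delivers the pointwise convergence
\begin{equation*}
    \frac{1}{n}\int_1^n d_{\rm LP}\bigl((h\mu)_{y,t}, (\nu\cdot\mu)_{y,t}\bigr)\,dt \to 0
\end{equation*}
for $\nu$-a.e.\ $h$ and $h\mu$-a.e.\ $y$, but that is not what the cited result says, and you never verify its hypotheses. The proposition is a quantitative statement: given two entropy conditions --- (1) a uniform lower bound $\frac{1}{n}\sum_{k=1}^n H((h\mu)^{\mathcal D_k(x)},\mathcal D_1)\geq \alpha$ on a large set of $(h,x)$, and (2) an upper bound $\frac{1}{n}H_n(\nu\cdot\mu,\mathcal D_n)\leq \alpha+\delta$ --- it produces an \emph{integrated} bound $<\varepsilon$ on the triple integral for all large $n$, not an almost-everywhere limit. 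Verifying hypothesis (1) is where the real work lies: the entropy convergence for $h\mu$ is only an a.e.\ statement (via \cite[Lemma 6.7]{Hochmanpreprint} and exact-dimensionality of $\mu$), so one must apply Egorov's theorem to extract a set $A$ of maps $h$ with $\nu(A)\geq 1-\delta$ on which the convergence is uniform, and then apply the cited proposition to the restricted measure $\nu_A$ rather than to $\nu$. Hypothesis (2) for $\nu_A\cdot\mu$ is where $\dim\nu\cdot\mu=\dim\mu$ and exact-dimensionality of $\nu\cdot\mu$ enter. None of this appears in your argument, and without it the appeal to the cited result is vacuous.

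A second, related omission: after running the argument for $\nu_A$, the conclusion involves $(\nu_A\cdot\mu)_{y,t}$ rather than $(\nu\cdot\mu)_{y,t}$, and one must pass back to the full mixture. This is done precisely with Proposition \ref{prop-densitytheorem} applied to $\nu_A\cdot\mu\ll\nu\cdot\mu$ --- so the density theorem is not merely ``inapplicable'' as you suggest; it is used, just with the correct pair of measures (the restricted mixture against the full mixture, not $h\mu$ against $\nu\cdot\mu$). Your instinct that $h\mu\ll\nu\cdot\mu$ fails in general is right, but the fix is the restriction-plus-Egorov scheme above, not an appeal to a stronger pointwise version of the cited proposition. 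Once these two steps are supplied, the remaining bookkeeping (letting $\delta\to 0$) is indeed routine, but as written your deduction is not ``Fubini plus bounded convergence'' resting on an established input; the input you invoke has not been established.
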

\begin{proof}
    Although the deduction is contained in \cite[Proof of Theorem 1.1]{BaranyKaenmakiPyoralaWu2023}, we include the short explanation on how the claim follows from \cite[Proposition 3.4]{BaranyKaenmakiPyoralaWu2023}. Since $\nu$ is supported on conformal maps and $\mu$ is exact-dimensional, it follows that
    \begin{equation*}
        \liminf_{n\to\infty} \frac{1}{n} \sum_{k=1}^n H((h\mu)^{\mathcal D_k(x)}, \mathcal{D}_1) = \dim \mu
    \end{equation*}
    for $h\mu$-almost every $x$, see \cite[Lemma 6.7]{Hochmanpreprint}. Therefore, by Egorov's theorem, for any $0<\varepsilon<\delta$ there exists a set $A\subset \mathcal{C}_{\rm conf}((-1,1)^d)$ with $\nu(A)\geq 1-\delta$ and an integer $N$ such that 
    \begin{equation*}
        h\mu(\lbrace x \in \R^d:\ \frac{1}{n} \sum_{k=1}^n H((h\mu)^{\mathcal D_k(x)}, \mathcal{D}_1) \geq \dim\mu - \varepsilon\rbrace) \geq 1-\delta\quad \text{for}\ \nu_A\text{-a.e.}\ h
    \end{equation*}
    for every $n\geq N$; This is exactly assumption (1) of \cite[Proposition 3.4]{BaranyKaenmakiPyoralaWu2023} for $\nu_A$ in place of $\nu$ and $\dim\mu-\varepsilon$ in place of $\alpha$. On the other hand, since $\nu\cdot\mu$ and therefore also $\nu_A\cdot \mu$ is exact-dimensional with $\dim\nu_A\cdot \mu = \dim\nu\cdot \mu$, as long as $N$ is large enough we have
    \begin{equation*}
        \frac{1}{n} H_n(\nu_A\cdot\mu, \mathcal D_n) \leq  \dim \nu\cdot\mu - \varepsilon+\delta = (\dim \mu - \varepsilon)+\delta
    \end{equation*}
    for every $n\geq N$. This is assumption (2) of \cite[Proposition 3.4]{BaranyKaenmakiPyoralaWu2023}. Therefore the said proposition says that
    \begin{equation*}
    \iint \frac{1}{n}\int_1^n d_{\rm LP}((\nu_A\cdot\mu)_{y,t}, (h_*\mu)_{y,t})\,dt\,dh\mu(y) \,d\nu(h) <\varepsilon
    \end{equation*}
    for every $n\geq N$. Since $\nu(A)\geq 1-\delta$ and $\delta$ can be taken arbitrarily small, the claim follows by applying Proposition \ref{prop-densitytheorem} for $\nu_A\cdot \mu\ll\nu\cdot\mu$.
\end{proof}

The fact that self-conformal measures are tangent-regular follows from Lemma \ref{lemma-tangentstructure}.

\begin{lemma}\label{lemma-tangentregular}
    Self-conformal measures are tangent-regular.
\end{lemma}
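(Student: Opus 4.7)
The plan is to combine the exact-dimensionality of self-conformal measures with the tangent structure from Lemma~\ref{lemma-tangentstructure} via a simple averaging argument. Write $s := \dim\mu$. It is well-known that self-conformal measures are exact-dimensional, so $\overline{\dim}_{\rm loc}\mu(x) = \underline{\dim}_{\rm loc}\mu(x) = s$ for $\mu$-almost every $x$. Proposition~\ref{prop-localdimensions} then forces $\dim P = s$ for every tangent distribution $P \in TD(\mu,x)$ at such an $x$, so in particular
\begin{equation*}
    \int \dim \eta \, dP(\eta) = s.
\end{equation*}

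Next, I would invoke Lemma~\ref{lemma-tangentstructure}: for $\mu$-a.e. $x$, every $P\in TD(\mu,x)$, and $P$-a.e. $\eta$, one may write $\eta = (\gamma \cdot \mu)_{B(0,1)}$ for some $\gamma\in \mathcal P(\mathcal C_{\rm conf}((-1,1)^d))$. The key observation is that any such $\eta$ satisfies $\dim\eta \geq s$. Indeed, if $\eta(A)>0$, then $(\gamma\cdot\mu)(A)\geq (\gamma\cdot\mu)(A\cap B(0,1))>0$, so by definition of the generalized convolution, $h\mu(A)>0$ for some $h\in\spt\gamma$. Since $h$ is bi-Lipschitz on the compactum $\spt\mu$ and $\mu$ has Hausdorff dimension $s$, the push-forward $h\mu$ is exact-dimensional of the same dimension $s$. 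Therefore $\dim A\geq s$, which gives $\dim\eta\geq s$.

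To conclude, I would use the elementary fact that if a $P$-integrable function is $P$-a.e. at least $s$ and has $P$-integral equal to $s$, then it equals $s$ almost surely. Applying this to $\eta\mapsto\dim\eta$ yields $\dim\eta = s$ for $P$-a.e. $\eta$, which is exactly the tangent-regularity of $\mu$. The only step carrying any real content is the lower bound $\dim\eta\geq s$, which reduces to the routine observation that a mixture of measures of dimension at least $s$ has dimension at least $s$; once this is noted, the averaging trick leaves no serious obstacle.
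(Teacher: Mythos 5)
Your proof is correct and follows essentially the same route as the paper's: both use Lemma \ref{lemma-tangentregular}'s ingredients in the same way, namely the representation $\eta = (\gamma\cdot\mu)_{B(0,1)}$ from Lemma \ref{lemma-tangentstructure} to get $\dim\eta\geq\dim\mu$, and exact-dimensionality plus Proposition \ref{prop-localdimensions} to rule out strict inequality on a set of positive $P$-measure. The only difference is that you spell out the lower bound $\dim(\gamma\cdot\mu)_{B(0,1)}\geq\dim\mu$, which the paper states without proof.
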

\begin{proof}
    Let $\Phi$ be conformal iterated function system on $\R^d$, and let $\mu$ be a self-conformal measure associated to $\Phi$. By Lemma \ref{lemma-tangentstructure}, for $\mu$-almost every $x$ and any tangent distribution $P$ at $x$, $P$-almost every measure $\eta$ has the form 
    \begin{equation*}
        \eta = (\gamma\cdot\mu)_{B(0,1)}
    \end{equation*}
    for some $\gamma \in\mathcal P(\mathcal C_{\rm conf}((-1,1)^d))$. Since $\dim (\gamma\cdot\mu)_{B(0,1)}\geq \dim\mu$, and Proposition \ref{prop-localdimensions} asserts that strict inequality cannot hold for $P$-almost every choice of $\eta$ since self-conformal measures are exact-dimensional \cite{FengHu2009}, it follows that $\dim \eta = \dim\mu$ for $P$-almost every $\eta$. 
\end{proof}

\subsection{Hypersurfaces}

Below we collect a few rigidity results on subsets of hypersurfaces. The first is a simple consequence of the implicit function theorem.

\begin{lemma}\label{lemma-hypersurface}
    Let $A\subset \R^d$ be a closed set. If $A\times A$ is contained in a countable union of smooth hypersurfaces of $\R^d\times\R^d$, then there exists an open set $U\subset \R^d$ such that $A\cap U$ is contained in a smooth hypersurface of $\R^d$.
\end{lemma}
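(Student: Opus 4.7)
The plan is to combine Baire category on $A\times A$ with a slicing argument via the implicit function theorem.

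Since $A$ is closed, $A\times A$ is closed in $\R^{2d}$ and hence a Baire space. Writing $A\times A=\bigcup_k((A\times A)\cap M_k)$, where $\{M_k\}$ is the given countable family of smooth hypersurfaces, expresses $A\times A$ as a countable union of relatively closed sets. Baire's theorem therefore yields an index $k$, a point $(x_0,y_0)\in A\times A$, and an open neighborhood $W\subset\R^{2d}$ of $(x_0,y_0)$ such that $(A\times A)\cap W\subset M_k$. After shrinking $W$, I may write $M_k\cap W=\{F=0\}$ for a smooth $F\colon W\to\R$ with $\nabla F\neq 0$ on $W$.

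Splitting the gradient as $\nabla F=(\nabla_x F,\nabla_y F)$, at least one block is non-zero at $(x_0,y_0)$; the two resulting cases are symmetric under swapping the roles of $x$ and $y$, so assume $\nabla_x F(x_0,y_0)\neq 0$. By continuity I may choose an open neighborhood $U\subset\R^d$ of $x_0$ with $U\times\{y_0\}\subset W$ and $\nabla_x F(x,y_0)\neq 0$ for every $x\in U$. The implicit function theorem then ensures that
\[
H:=\{x\in U:F(x,y_0)=0\}
\]
is a smooth hypersurface of $\R^d$. To finish, any $x\in A\cap U$ satisfies $(x,y_0)\in(A\times A)\cap W\subset M_k$ (since $y_0\in A$), so $F(x,y_0)=0$ and hence $x\in H$. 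Thus $A\cap U\subset H$, as required.

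The only substantive step beyond this bookkeeping is the Baire-type observation that a countable cover of $A\times A$ by closed subsets of hypersurfaces must contain one with non-empty relative interior; everything else is a direct application of the implicit function theorem to the single-variable slice $F(\,\cdot\,,y_0)$. I do not foresee any genuine technical obstacle.
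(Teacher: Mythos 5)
Your proof is correct and follows essentially the same route as the paper: Baire category to localize $A\times A$ inside a single smooth hypersurface near some $(x_0,y_0)$, then a slicing argument via the implicit function theorem applied to the defining function. The only (cosmetic) difference is that the paper first reorders coordinates and writes the hypersurface as a graph over $2d-1$ variables before restricting to a slice $\{a\}\times\R^d$ with $a\in A$, whereas you work directly with the sliced function $F(\cdot,y_0)$ and invoke the regular value theorem; both arguments share the same (harmless, standard-to-fix) implicit assumption that each hypersurface can be replaced by countably many relatively closed pieces so that Baire applies.
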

\begin{proof}
    By Baire category theorem, there exists a smooth hypersurface $M\subset \R^d\times\R^d$ and an open set $V_0\subset \R^d\times\R^d$ such that $(A\times A)\cap V_0\subset M$. By \cite[Proposition 5.16]{Lee2013}, there exists an open set $V_1 \subset V_0$ and a smooth function $F: V_1\to \R$ such that $Df(x)\neq 0$ for every $x\in V_1$ and $F^{-1}(0) = V_1 \cap M$. Since not all partial derivatives of $f$ are identically zero, by reordering the coordinates (in such a way that $A\times A$ remains a product set) we may suppose that for some $x_0 \in V_1\cap M$ the partial derivative with respect to the last coordinate is non-zero at $x_0$. 
    
    Applying the implicit function theorem for $F$ at $x_0$, there exists an open set $V_2\subset \R^{d}\times \R^{d-1}$ a smooth map $g:V_2\to \R$ such that 
    \begin{equation*}
        F^{-1}(0) \cap (V_2\times \R) = \lbrace (z,g(z)):\ z \in V_2\rbrace.
    \end{equation*}
    Let $V_3\subset \R^d$, $V_4\subset \R^{d-1}$ and $V_5\subset \R$ be open sets such that $V_3\times V_4\subset V_2$ and $V_3 \times V_4 \times V_5 \subset V_1$. Then 
    \begin{align*}
        (A\times A) \cap (V_3\times V_4\times V_5) \cap M &\subset F^{-1}(0)\cap (V_3\times V_4\times V_5) \\
        &\subset \lbrace (x,y,g(x,y)):\ x\in V_3, y\in V_4\rbrace.
    \end{align*}
    Fixing an arbitrary $a\in A\cap V_3\subset \R^d$, we find further that 
    \begin{equation*}
        \lbrace a\rbrace\times (A \cap (V_4\times V_5)) \subset \lbrace (a,y,g(a,y)):\ y\in V_4\rbrace.
    \end{equation*}
    In particular, the set $A\cap (V_4\times V_5) \subset \R^d$ is contained in the graph of $\tilde{g}: V_4 \to \R$, $\tilde{g}(y) = g(a,y)$ which is a smooth hypersurface of $\R^d$. This completes the proof with $U := V_4\times V_5$. 
\end{proof}

When the set $A$ is additionally self-conformal, it is easy to upgrade the statement to a global one.

\begin{lemma}\label{lemma-hypersurface_selfconformal}
    Let $K\subset \R^d$ be a self-conformal set. If $K\times K$ is contained in a countable union of smooth hypersurfaces of $\R^d\times\R^d$, then $K$ is contained in a smooth hypersurface of $\R^d$.
\end{lemma}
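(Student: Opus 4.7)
The plan is to apply Lemma \ref{lemma-hypersurface} to obtain a local smooth hypersurface containing a piece of $K$, and then propagate this information globally using the self-conformal structure of $K$.

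First, since $K$ is compact and $K\times K$ is contained in countably many smooth hypersurfaces of $\R^d\times\R^d$, Lemma \ref{lemma-hypersurface} produces an open set $U \subset \R^d$ such that $K \cap U$ is contained in a smooth hypersurface $M_0$ of $\R^d$. Inspecting the Baire category argument at the start of that lemma's proof (applied to the complete space $K\times K$) shows that one can in fact arrange $K \cap U \neq \emptyset$, so I may fix some $y_0 \in K\cap U$.

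Next, I exploit the fact that $K$ is the attractor of a contractive conformal IFS $\Phi = \lbrace f_i\rbrace_{i\in\Gamma}$. Choosing any address $(i_1, i_2, \ldots) \in \Gamma^{\N}$ of $y_0$, the cylinders $f_{i_1\cdots i_n}(K)$ all contain $y_0$ and have diameters tending to $0$, so for $n$ large enough the entire cylinder $f_{\mathbf{i}}(K)$, with $\mathbf{i} = (i_1,\ldots,i_n)$, lies inside $U$. Hence $f_{\mathbf{i}}(K) \subset K \cap U \subset M_0$. Since $f_{\mathbf{i}}$ is a smooth conformal diffeomorphism on $(-1,1)^d$, applying $f_{\mathbf{i}}^{-1}$ to the inclusion $f_{\mathbf{i}}(K) \subset M_0 \cap f_{\mathbf{i}}((-1,1)^d)$ yields
\[
K = f_{\mathbf{i}}^{-1}(f_{\mathbf{i}}(K)) \subset f_{\mathbf{i}}^{-1}\bigl(M_0 \cap f_{\mathbf{i}}((-1,1)^d)\bigr),
\]
and the right-hand side is a smooth hypersurface of the open set $(-1,1)^d \subset \R^d$, as required. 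The only subtle point in this plan is the nonemptiness of $K \cap U$ in the appeal to Lemma \ref{lemma-hypersurface}, which follows from inspecting its proof rather than its statement; once that is secured the argument is essentially routine.
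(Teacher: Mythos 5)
Your proof is correct and follows essentially the same route as the paper: apply Lemma \ref{lemma-hypersurface} to localize $K$ into a hypersurface over some open set $U$, then use a sufficiently deep cylinder map $f_{\mathbf{i}}$ of the IFS to pull the whole of $K$ into the preimage $f_{\mathbf{i}}^{-1}(M_0)$. Your extra care about the nonemptiness of $K\cap U$ (implicit in the Baire category step of Lemma \ref{lemma-hypersurface}) and the explicit cylinder construction are just more detailed renderings of the paper's one-line ``iterating \eqref{eq-selfconformalset}'' argument.
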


\begin{proof}
    Let $U\subset \R^d$ be the open set given by Lemma \ref{lemma-hypersurface}, such that $K\cap U$ is contained in a smooth hypersurface $M\subset \R^d$. Iterating \eqref{eq-selfconformalset}, we find a conformal map $f: (-1,1)^d\to \R^d$ such that $f(K) \subset K\cap U\subset M$, so $K\subset f^{-1}(M)$ which is a smooth hypersurface of $\R^d$.
\end{proof}

Finally, we require the following rigidity result of Käenmäki for self-conformal sets. While the result is not formulated in this exact form in \cite{Kaenmaki2003}, it can be extracted from the proof of \cite[Theorem 2.1]{Kaenmaki2003}. 

\begin{lemma}\label{lemma-selfconformalrigidity}
    Let $K\subset \R^d$ be a self-conformal set. Suppose that there exists a $C^1$-hypersurface $M\subset \R^d$ such that $K\subset M$. Then,
    \begin{enumerate}
        \item if $d=2$, the set $M$ is an analytic curve, or
        \item if $d\geq 3$, the set $M$ is either an affine hyperplane or a sphere.
    \end{enumerate}
\end{lemma}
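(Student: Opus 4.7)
The plan is to leverage the rigidity of conformal maps---Liouville's theorem in dimensions $d\geq 3$ and complex analyticity in dimension $2$---to upgrade the $C^1$-regularity of a hypersurface containing $K$ to the stated rigid classes.

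First I would use self-conformality to produce, at a fixed base point $x\in K$, a sequence of cylinder maps $h_n=f_{\omega_1}\circ\cdots\circ f_{\omega_{k_n}}\in\mathcal{C}_{\rm conf}$ with contraction ratios $\lambda(h_n)\to 0$ whose images $h_n(K)\subset K\subset M$ cluster near $x$. Normalizing via $g_n(y):=\lambda(h_n)^{-1}(h_n(y)-h_n(y_0))$ for an auxiliary reference point $y_0$, one obtains a family of conformal maps with $\|Dg_n(y_0)\|\approx 1$, thanks to the bounded distortion of conformal contractions. In $d\geq 3$, Liouville's theorem makes each $g_n$ a restriction of a M\"obius transformation; in $d=2$ each $g_n$ is holomorphic (or antiholomorphic). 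Either way the family is normal, and passing to a subsequence yields $g_n\to g$ uniformly on compact sets, where $g$ is a non-constant conformal map.

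Next I would analyze the normalized images through blow-ups of $M$. Since $h_n(K)\subset M$, the sets $g_n(K)$ lie inside the rescalings $\lambda(h_n)^{-1}(M-h_n(y_0))$. Because $M$ is $C^1$ and $h_n(y_0)\to x$, these rescalings converge in Hausdorff distance on any fixed ball to the tangent hyperplane of $M$ at $x$, translated to pass through the origin; call this hyperplane $T$. Taking the limit $n\to\infty$ gives $g(K)\subset T$ and hence $K\subset g^{-1}(T)$.

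Finally I would identify $g^{-1}(T)$ and conclude. For $d\geq 3$, since $g$ is a M\"obius transformation and this group permutes the class of affine hyperplanes and round spheres, $g^{-1}(T)$ is itself either an affine hyperplane or a round sphere. For $d=2$, the preimage of the line $T$ under the non-constant holomorphic (or antiholomorphic) map $g^{-1}$ is locally the zero set of a non-constant real-analytic function, hence an analytic curve. The $C^1$-hypersurface $M$, whose tangent field along $K$ coincides with that of this rigid object, may then be replaced away from $K$ by the rigid object itself. The principal obstacle is the normal-family step: the normalizations $g_n$ must have derivatives bounded both above and below so that the limit $g$ is non-degenerate and conformal, which is ensured by the compactness of $CO(d)$-valued differentials together with the uniform H\"older control built into $\mathcal{C}_{\rm conf}$, and by Liouville (respectively, Cauchy--Riemann) ruling out non-conformal limits.
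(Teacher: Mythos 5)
Your argument is correct in substance, and it supplies a genuine proof where the paper gives none: the paper simply asserts that the statement ``can be extracted from the proof of \cite[Theorem 2.1]{Kaenmaki2003}'', and the blow-up-plus-Liouville strategy you describe is essentially that argument, so this is the same route rather than a new one. Two points deserve tightening. First, the reference point must be taken in $K$ (say $h_n=f_{\omega_1}\circ\cdots\circ f_{\omega_n}$ and $y_0\in K$, so that $h_n(y_0)\in K\subset M$ and $h_n(y_0)\to x$); otherwise the rescalings $\lambda(h_n)^{-1}(M-h_n(y_0))$ are not centred on $M$ and need not converge to the tangent plane. With that, the $C^1$ graph representation of $M$ near $x$ gives the locally uniform convergence of the blow-ups to $T_xM-x$, and since $\diam g_n(K)$ stays bounded by bounded distortion, $g(K)\subset T$ follows. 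Second, in the case $d=2$ the phrase ``zero set of a non-constant real-analytic function, hence an analytic curve'' is too quick: such zero sets can have branch points at critical points of the function. What saves you is that the limit $g$ inherits $\Vert Dg\Vert\geq c>0$ from the lower distortion bound and is injective by Hurwitz (as a locally uniform limit of injective holomorphic or antiholomorphic maps), so $g^{-1}(T)$ is a genuine embedded analytic curve containing $K$. Finally, note that the conclusion one actually obtains (and the one the paper uses) is that $K$ is contained in an analytic curve, respectively a hyperplane or sphere; the given $M$ itself can of course be modified away from $K$, so your closing remark about replacing $M$ by the rigid object is the right reading of the statement.
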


\subsection{The projection theorem}

The assumptions of non-linearity in Theorems \ref{thm-1} - \ref{thm-2} and the algebraic assumptions of Theorems \ref{thm-3} - \ref{thm-7} are utilised through a Marstrand-type projection theorem of López and Moreira for product sets. Below we record a special case of that result. Let $\mathcal L$ denote the Lebesgue measure on $\R$ and $\theta$ the unique bi-invariant probability measure on $SO(d)$. For $s\geq 0$ and Borel probability measure $\mu$ let 
\begin{equation*}
    I_s(\mu) = \iint |x-y|^{-s}\,d\mu(x)\,d\mu(y).
\end{equation*}

\begin{theorem}[Corollary of Theorem 2.3 of \cite{LopezMoreira2015}]\label{thm-moreira}
    Let $d, n\geq 2$. For every $\lambda = (t_1, O_1, \ldots, t_n, O_n) \in (\R\times SO(d))^n$, let 
    \begin{equation*}
    \pi_\lambda(x) = \sum_{i=1}^n 2^{t_i} O_i x_i.
    \end{equation*}
    Let $0\leq s_i \leq d$ for $1\leq i \leq n$ and $s = \min\lbrace \sum_{i=1}^n s_i, d\rbrace$. If $s \not\in \Z$, then there exist numbers $s_i'\leq s_i, 1\leq i \leq n$ and $C>0$ such that for any Borel probability measures $\mu_1,\ldots, \mu_n$ on $\R^d$ and for $(\mathcal L\times \theta)^n$-almost every $\lambda\in(\R\times SO(d))^n$, 
    \begin{equation*}
        I_s(\pi_\lambda(\mu_1\times\cdots\times\mu_n)) \leq C I_{s_1'}(\mu_1)\ldots I_{s_n'}(\mu_n).
    \end{equation*}
\end{theorem}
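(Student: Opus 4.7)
The statement is presented as a specialization of \cite[Theorem 2.3]{LopezMoreira2015}, so my plan is to verify that the parametrized family $\pi_\lambda(x) = \sum_{i=1}^n 2^{t_i} O_i x_i$ fits the framework of that general result. The underlying strategy is a standard Marstrand--Fubini transversality argument. Setting $\mu = \mu_1\times\cdots\times\mu_n$ and choosing a compactly supported bump function $\rho$ on the parameter space $(\R\times SO(d))^n$, one writes
\begin{equation*}
    \int I_s(\pi_\lambda\mu)\, d\rho(\lambda) = \iint \left( \int |\pi_\lambda(z-w)|^{-s}\, d\rho(\lambda) \right) d\mu(z)\, d\mu(w),
\end{equation*}
reducing the task to a pointwise transversality estimate
\begin{equation*}
    \int |\pi_\lambda(u)|^{-s}\, d\rho(\lambda) \le C \prod_{i=1}^n |u_i|^{-s_i'}
\end{equation*}
for every nonzero $u = (u_1,\ldots,u_n)\in(\R^d)^n$, for suitable $s_i'\le s_i$.

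The heart of the matter is this factorised transversality bound, which exploits the product structure of the parameter space. For each index $i$, averaging $|2^{t_i}O_i u_i + w|^{-s}$ over $(t_i, O_i)\in\R\times SO(d)$ with $w$ fixed yields a factor comparable to $|u_i|^{-s_i'}$ as long as $s_i' < d$: the scale parameter $t_i$ provides radial transversality on a one-parameter family, while the rotation $O_i$ distributes the direction of $O_i u_i$ uniformly on the sphere of radius $|u_i|$. Iterating this estimate factor by factor produces the desired bound. The non-integrality of $s$ is crucial here: it lets us select $s_i'\le s_i$ with $\sum s_i' = s$ and every $s_i' < d$ strictly. If $s$ were an integer and $\sum s_i$ strictly exceeded $s$, the allocation would force some $s_i' = d$, and the corresponding one-factor integral would diverge logarithmically.

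The main obstacle in executing this plan is the careful bookkeeping for the factorised transversality estimate --- in particular, the inductive step handling the rotational averages on $SO(d)$ uniformly in the free variable $w$ arising from the remaining summands. Once the estimate is in hand, the Fubini identity above yields $\int I_s(\pi_\lambda\mu)\, d\rho(\lambda) \le C\prod_i I_{s_i'}(\mu_i)$; upgrading the integrated bound to a $\rho$-a.e.\ pointwise bound with a uniform constant is then routine, e.g.\ via a Lebesgue differentiation argument on the parameter space combined with restricting $\rho$ to a countable exhaustion of $(\R\times SO(d))^n$ by compact sets.
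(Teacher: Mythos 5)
The paper offers no proof of this statement at all: it is recorded verbatim as a special case of L\'opez--Moreira's Theorem 2.3, and the ``proof'' is the citation. Your plan --- the potential-theoretic transversality argument via $\int I_s(\pi_\lambda\mu)\,d\rho(\lambda) = \iint\bigl(\int|\pi_\lambda(z-w)|^{-s}\,d\rho(\lambda)\bigr)\,d\mu(z)\,d\mu(w)$ followed by a factorised pointwise kernel estimate --- is indeed the standard route and is essentially how the cited result is obtained, so the architecture is right. But as a proof it has two genuine gaps. First, the heart of the matter, the estimate $\int|\pi_\lambda(u)|^{-s}\,d\rho(\lambda)\le C\prod_i|u_i|^{-s_i'}$, is asserted rather than proved, and the ``iterate factor by factor'' description does not work as literally stated: averaging over $(t_1,O_1)$ alone with the remaining summand $w$ frozen yields a bound of the shape $\max\{|u_1|,|w|\}^{-s}$ (for $s<d$), which is \emph{not} of product form, and the wrong one of the two regimes dominates if you try to split it naively. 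One needs the full average over all $n$ blocks together with a case analysis on the relative sizes of the $|u_i|$ to convert $\max_i\{|u_i|\}^{-s}$ into $\prod_i|u_i|^{-s_i'}$ (using $\sum_i s_i'\ge s$ and $|u_i|\lesssim 1$). Relatedly, your account of the role of $s\notin\Z$ is slightly off: since $s=\min\{\sum_i s_i,d\}$ and $d\in\Z$, non-integrality already forces $s=\sum_i s_i<d$, so each $s_i\le s<d$ automatically; the condition is not about avoiding $s_i'=d$.

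Second, the concluding step is invalid: an integrated bound $\int I_s(\pi_\lambda\mu)\,d\rho(\lambda)\le C\prod_i I_{s_i'}(\mu_i)$ cannot be upgraded to a $\rho$-a.e.\ pointwise bound with a uniform constant by Lebesgue differentiation. Lebesgue differentiation recovers $I_s(\pi_{\lambda_0}\mu)$ as a limit of averages over shrinking balls, but the transversality constant for a ball of radius $r$ blows up as $r\to0$ (with no averaging there is no transversality), so the limit is not controlled by the global constant; and in general an $L^1$ function of norm $C$ is of course not bounded by $C$ a.e. What the method actually delivers is $\int_K I_s(\pi_\lambda\mu)\,d(\mathcal L\times\theta)^n(\lambda)\le C_K\prod_i I_{s_i'}(\mu_i)$ for compact $K$, hence a.e.\ finiteness of the left-hand side whenever the right-hand side is finite --- which is all that is used in Proposition \ref{prop-FDprojections}, and is the form in which you should state and prove the conclusion rather than the literal pointwise inequality.
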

We will only apply this result with $n=2$. Let $\pi:\R^{d}\times\R^d\to \R^d$ denote the map $(x,y)\mapsto x+y$. Below we formulate an analogue of Theorem \ref{thm-moreira} for projections of fractal distributions with suitable invariance properties. For functions $f,g:\R^d\to R^d$, let $(f,g)$ denote the function $\R^d\times\R^d\to\R^d\times\R^d$, $(x,y)\mapsto (f(x),g(x))$. 
\begin{proposition}\label{prop-FDprojections}
    Let $P$ be an ergodic fractal distribution on $\R^{d}\times\R^d$ which is supported on product measures and satisfies the following: There exists $t_0\geq 0$ and a dense subset $\mathcal A\subset CO(d)$ such that $S_{t_0}(A,I)P = P$ for every $A \in \mathcal A$. Then
    \begin{equation*}
        \dim \pi P = \min\lbrace k, \dim P\rbrace.\qquad 
    \end{equation*}
\end{proposition}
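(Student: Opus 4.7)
The plan is to combine the López--Moreira projection theorem (Theorem~\ref{thm-moreira}) with the invariance hypothesis on $P$ and a Fubini-type averaging argument. First I would promote the invariance from $\mathcal A$ to all of $CO(d)$: the map $A\mapsto S_{t_0}(A,I)P$ is continuous in the weak-$^*$ topology (the pushforward $(A,\mu)\mapsto (A,I)\mu$ is jointly continuous, and $S_{t_0}$ acts continuously on measures with $0$ in their support, which is the case for $P$-a.e.\ $\mu$ since $P$ is a fractal distribution), so the set $\{A\in CO(d):\ S_{t_0}(A,I)P=P\}$ is closed and, by density of $\mathcal A$, equals all of $CO(d)$.

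Second, I would establish the identity
\[
\int \dim\pi_\lambda\mu\,dP(\mu) = \dim\pi P \quad\text{for every } \lambda=(A,B)\in CO(d)^2,
\]
where $\pi_\lambda(\mu_1\times\mu_2)=A\mu_1+B\mu_2$ is as in Theorem~\ref{thm-moreira}. Pointwise $\dim\pi_{(A,B)}\mu=\dim\pi_{(B^{-1}A,I)}\mu$ because $B\in CO(d)$ is bi-Lipschitz, so it suffices to treat the case $B=I$. Linearity of $\pi$ implies that $\pi\circ S_{t_0}$ and $S_{t_0}\circ\pi$ agree up to a positive multiplicative scalar and hence have the same dimension; combined with the enhanced invariance $S_{t_0}(A,I)P=P$ and the $P$-a.s.\ constancy of $\mu\mapsto\dim\pi\mu$ given by Theorem~\ref{thm-hochmanrigidity}(4), this yields
\[
\int \dim \pi(A,I)\mu\,dP(\mu) = \int \dim \pi S_{t_0}(A,I)\mu\,dP(\mu) = \int \dim\pi\mu\,dP(\mu) = \dim\pi P.
\]
Theorem~\ref{thm-hochmanrigidity}(3)--(4) together with the product structure also gives that $P$-a.e.\ $\mu=\mu_1\times\mu_2$ is exact-dimensional with $\dim\mu_i=d_i$ a.s.\ constant and $d_1+d_2=\dim P$.

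To finish, fix $\varepsilon>0$ small enough that $s:=\min\{d,d_1+d_2-2\varepsilon\}\notin\Z$ and set $s_i:=d_i-\varepsilon$. Theorem~\ref{thm-moreira} with $n=2$ provides numbers $s_i'\leq s_i$ and $C>0$ such that for $(\mathcal L\times\theta)^2$-a.e.\ $\lambda$ and $P$-a.e.\ $\mu$, $I_s(\pi_\lambda\mu)\leq C\,I_{s_1'}(\mu_1)\,I_{s_2'}(\mu_2)<\infty$, and in particular $\dim\pi_\lambda\mu\geq s$. Integrating against any probability measure $\rho\ll(\mathcal L\times\theta)^2$ supported on a compact subset of $CO(d)^2$, and swapping integrals via Fubini using the identity from the previous step,
\[
\dim\pi P = \iint \dim\pi_\lambda\mu\,dP(\mu)\,d\rho(\lambda) = \iint \dim\pi_\lambda\mu\,d\rho(\lambda)\,dP(\mu) \geq s.
\]
Sending $\varepsilon\to 0$ along non-integer values of $s$ gives $\dim\pi P\geq\min\{d,\dim P\}$, and the matching upper bound is immediate since $\pi$ is Lipschitz with $\R^d$-valued image.

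I expect the main obstacle to be the dimension identity $\dim\pi S_{t_0}\nu=\dim\pi\nu$ used in Step~2: the restriction/renormalisation built into $S_{t_0}$ introduces a scalar that differs from the one intrinsic to $(\pi\nu)_{0,t_0}$, and one must argue that this discrepancy does not spoil the dimension. This reduces to exact-dimensionality of $\pi\mu$ at $P$-typical $\mu$, which can be extracted from Theorem~\ref{thm-hochmanrigidity} applied to the (still $S_t$-invariant and ergodic) pushforward distribution $\pi P$.
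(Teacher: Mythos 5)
Your overall skeleton matches the paper's: apply Theorem~\ref{thm-moreira} with $n=2$ and $s_i<\dim\mu_i$ to get $\dim\pi_\lambda(\mu_1\times\mu_2)=\min\{d,\dim\mu_1+\dim\mu_2\}$ for a.e.\ $\lambda$, reduce $(L_1,L_2)$ to $(L_2^{-1}L_1,I)$, use the invariance of $P$ to transfer the almost-every-$L$ conclusion to $L=I$, and finish with Theorem~\ref{thm-hochmanrigidity}(3)--(4). The divergence, and the genuine gap, is in your first step. You claim that $A\mapsto S_{t_0}(A,I)P$ is weak-$^*$ continuous, so that $\{A:\ S_{t_0}(A,I)P=P\}$ is closed and the invariance upgrades from the dense set $\mathcal A$ to all of $CO(d)$. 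But $S_{t_0}$ is restriction to $B(0,2^{-t_0})$ followed by renormalisation, and neither operation is weak-$^*$ continuous: as $A_n\to A$, the masses $(A_n,I)\eta(B(0,2^{-t_0}))$ need not converge to $(A,I)\eta(B(0,2^{-t_0}))$ unless $(A,I)\eta$ gives zero mass to the boundary of that cube. This is not automatic here -- $P$ is supported on product measures whose factors may well sit on lower-dimensional sets, so a face of the cube can carry positive mass. Without this, the set of $A$ with $S_{t_0}(A,I)P=P$ need not be closed, and the upgrade fails. The paper avoids this entirely: it never extends the measure-level invariance beyond $\mathcal A$, but instead invokes the lower semi-continuity of $A\mapsto\int\dim\pi(A\mu\times\nu)\,dP(\mu\times\nu)$ (from \cite[Theorem 1.10]{HochmanShmerkin2012}, designed precisely to absorb these discontinuities), which together with the invariance on $\mathcal A$ gives the one-sided bound $\int\dim\pi(L\mu\times\nu)\,dP\leq\int\dim\pi(\mu\times\nu)\,dP$ for \emph{every} $L\in CO(d)$ -- and a one-sided bound is all the averaging argument needs.

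A second, smaller point: your Step~2 requires the exact identity $\int\dim\pi S_{t_0}(A,I)\eta\,dP=\int\dim\pi(A,I)\eta\,dP$, of which only the inequality $\dim\pi S_{t_0}\gamma\geq\dim\pi\gamma$ (from $\pi(\gamma|_E)\ll\pi\gamma$ plus bi-Lipschitz rescaling) is elementary; you correctly flag this as an obstacle, but your proposed fix via applying Theorem~\ref{thm-hochmanrigidity} to $\pi P$ is not licensed, since $\pi P$ is not obviously a fractal distribution. The paper again only uses the easy direction of this inequality, which is another reason its chain of inequalities is the right formulation. I would rewrite your argument to run entirely through inequalities in the manner just described; the Fubini/energy part of your proof is then fine.
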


\begin{proof}
It is well-known (see \cite[Section 8]{Mattila1995}) that if $I_s(\mu)<\infty$, then $\dim \mu \geq s$. Conversely, if $\dim \mu \geq s$, then for any $t<s$ and $\varepsilon>0$ there exists $E\subset \R^d$ with $\mu(E)>1-\varepsilon$ such that $I_t(\mu_E)<\infty$. For $P$-almost every $(\mu\times\nu)$, applying Theorem \ref{thm-moreira} to suitable restrictions of $\mu$ and $\nu$ with $s_1 < \dim \mu, s_2<\dim\nu$, we obtain 
\begin{equation*}
    \dim \pi(L_1\mu\times L_2\nu) = \min \lbrace d, \dim\mu+\dim\nu\rbrace
\end{equation*}
for $(\mathcal L\times \theta)^2$-almost every $(L_1,L_2) \in CO(d)^2 \cong (\R\times SO(d))^2$. Note also that
\begin{equation*}
\dim \pi(L_1\mu\times L_2\nu) = \dim L_2 \pi(L_2^{-1}L_1\mu \times \nu) = \dim\pi(L_2^{-1}L_1\mu\times\nu).
\end{equation*}
By \cite[Theorem 1.10]{HochmanShmerkin2012} (see also \cite[Theorem 1.21]{Hochmanpreprint}), the function $CO(d)\to \R$,
\begin{equation*}
    A\mapsto \int \dim \pi(A\mu \times\nu)\,dP(\mu\times\nu)
\end{equation*}
is lower semi-continuous. In particular, for any $L\in CO(d)$ and $(A_n)_n \subset \mathcal A$ with $\lim_n A_n = L$, 
\begin{align*}
    \int \dim \pi(L\mu \times\nu)\,dP(\mu\times\nu) &\leq \liminf_{n\to\infty} \int \dim \pi(A_n\mu\times\nu)\,dP(\mu\times\nu) \\
    &\leq \liminf_{n\to\infty} \int \dim \pi(S_t(A_n\mu\times\nu))\,dP(\mu\times\nu) \\
    &=\int \dim \pi(\mu\times\nu)\,dP(\mu\times\nu)
\end{align*}
by the $S_t(A,I)$-invariance of $P$. Since the projection of $(\mathcal L\times \theta)^2$ under $(L_1, L_2) \mapsto L_2^{-1}L_1$ is equal to $\mathcal L\times \theta$, we find that
    \begin{align*}
        \dim \pi P &= \int \dim\pi(\mu\times\nu) \,dP(\mu\times\nu) \\
        &\geq \iint \dim \pi(L\mu\times\nu)\,d(\mathcal L\times \theta)(L)\,dP(\mu\times\nu) \\
        &= \iint\min\lbrace d,\dim\mu + \dim\nu\rbrace\,dP(\mu\times\nu).
    \end{align*}
By Theorem \ref{thm-hochmanrigidity} (3) and (4), $\dim P = \dim(\mu\times\nu) = \dim \mu+\dim\nu$ for $P$-almost every $\mu\times\nu$. Since the inequality $\dim \pi P \leq \min\lbrace k,\dim P\rbrace$ is trivial, this completes the proof.
\end{proof}

\section{On the conformal orthogonal group}

Recall that $CO(d) =\lbrace r O:\ r>0, O \in SO(d)\rbrace$. In this section we establish conditions for certain subsets of $CO(d)$ to generate the entire group. If $G$ is a topological group and $\mathcal A\subset G$, we write 
\begin{equation*}
\langle \mathcal A\rangle := \overline{\lbrace A_1 A_2\cdots A_n:\ n\geq 1, A_i \in \mathcal A\rbrace}.
\end{equation*}
In this section, the group operations of both $CO(d)$ and $SO(d)$ are given by multiplication, and the group operation of $\R$ is given by addition. The group operation of $SO(d)\times \R$ is given by multiplication in the first coordinate and addition in the second. 

If $V$ is a vector space and $A\subset V$, then we write ${\rm span}(A)$ for the set of linear combinations of elements of $A$.

\begin{proposition}\label{prop-keyprop}
    Let $U \subset \R^d$ be open and convex, and let $E\subset U$ be such that for any open set $V\subset U$, the set $E\cap V$ is not contained in a smooth hypersurface of $\R^d$. Then for any $f \in \mathcal C_{\rm conf}(U)$ which is non-linear and orientation-preserving, we have
    \begin{equation*}
        \langle \lbrace Df(x)Df(y)^{-1}:\ x,y\in E \rbrace\rangle = CO(d).
    \end{equation*}
\end{proposition}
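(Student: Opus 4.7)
The plan is to argue by contradiction. Suppose $H := \langle\{Df(x)Df(y)^{-1} : x,y\in E\}\rangle$ is a proper closed subgroup of $CO(d)$; by Cartan's theorem $H$ is then a Lie subgroup. Fixing $y_0\in E$, each generator lies in $H$, so $Df(E)\subset M := H\cdot Df(y_0)$, a closed real-analytic submanifold of $CO(d)$ of dimension $\dim H<\dim CO(d)$. Since conformal maps are real-analytic---holomorphic in the plane, and Möbius transformations in $d\ge 3$ by Liouville's theorem---the map $Df:U\to CO(d)$ is real-analytic, and hence $A:=Df^{-1}(M)$ is a real-analytic subset of $U$ containing $E$. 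If $A\ne U$, then $A$ is a proper analytic subset, and by the local structure of real-analytic sets there is an open $V\subset U$ on which $A\cap V$ (and hence $E\cap V$) is contained in a single smooth hypersurface of $\R^d$, contradicting the hypothesis on $E$.

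Therefore $A=U$, that is $Df(U)\subset M$; using $MM^{-1}=HH=H$ this yields
$$\langle Df(U)Df(U)^{-1}\rangle \subset H \subsetneq CO(d).$$
It remains to show that $\langle Df(U)Df(U)^{-1}\rangle = CO(d)$ for any non-affine orientation-preserving conformal $f$ on a convex domain $U$, which will contradict the inclusion above; I expect this final step to be the main obstacle. For $d=2$ it is immediate from the open mapping theorem: under $CO(2)\cong\C^*$ the map $Df$ corresponds to the non-constant holomorphic function $f'$, so $f'(U)$ is open in $\C^*$, the set of ratios $f'(x)/f'(y)$ is an open neighbourhood of $1\in\C^*$, and any such neighbourhood in the connected Lie group $\C^*$ generates $\C^*$.

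For $d\ge 3$, Liouville's theorem combined with non-affinity of $f$ forces $f$ to have a finite pole $q\in\R^d\setminus U$ and to take the form
$$f(x) = a + \lambda\, O\,\frac{x-q}{|x-q|^2}$$
for constants $a\in\R^d$, $\lambda>0$, $O\in O(d)$. Differentiating gives
$$Df(x) = \mu(x)\, O\, R(\hat u(x)),$$
where $\mu(x) = \lambda/|x-q|^2$, $\hat u(x) = (x-q)/|x-q|$ and $R(\hat u) = I - 2\hat u\hat u^T$ is the reflection across the hyperplane orthogonal to $\hat u$. Since $U$ is open and convex with $q\notin U$, the image $\Omega := \hat u(U)$ is an open subset of $S^{d-1}$ and $\mu(U)$ is an open subset of $\R_+$. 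Using $R(\hat u)^{-1} = R(\hat u)$,
$$Df(x)Df(y)^{-1} = \frac{\mu(x)}{\mu(y)}\, O\, R(\hat u(x))R(\hat u(y))\, O^{-1}$$
exhibits each generator as a positive scalar times the conjugate by $O$ of a simple rotation in the $2$-plane $\linspan(\hat u(x),\hat u(y))$. The rank-two bivectors $\hat u\wedge\hat v$ for $\hat u,\hat v\in\Omega$ linearly span $\mathfrak{so}(d)$ because $\Omega$ contains a basis of $\R^d$, so the closed subgroup generated by these simple rotations is $SO(d)$; together with the open set of scalar ratios this produces $\R_+\cdot SO(d) = CO(d)$, which is invariant under conjugation by $O$. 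The contradiction with the displayed inclusion then completes the proof.
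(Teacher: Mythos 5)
Your overall strategy is sound and, modulo two under-justified steps, recovers the proposition by a route that is recognizably different in packaging from the paper's. The paper linearizes immediately: it passes to the Lie algebra $\mathfrak{so}(d)\times\R$ of $CO(d)$, reduces the statement to showing that for every nonzero linear functional $L$ the real-analytic function $(x,y)\mapsto L(\mathfrak{log}(O_f(x)O_f(y)^{-1}),\log\lambda_f(x)-\log\lambda_f(y))$ does not vanish identically on $E\times E$, proves non-vanishing on $U\times U$ by the same Liouville/reflection computation you use, and then transfers from $U$ down to $E$ via Lemmas \ref{lemma-mityagin} and \ref{lemma-hypersurface}. You instead work at the group level with the coset $M=H\cdot Df(y_0)$ and its preimage $A=Df^{-1}(M)$; this cleanly separates the ``transfer from $E$ to $U$'' step from the ``compute the generated group for the open set $U$'' step, and your two computations ($f'(U)$ open for $d=2$; $Df(x)=\mu(x)\,O\,R(\hat u(x))$ and bivectors spanning $\mathfrak{so}(d)$ for $d\ge3$) coincide with the paper's.

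Two steps need repair. First, ``$A\neq U$ implies $A$ is locally contained in a single smooth hypersurface'' is not immediate: you must rule out $A$ having nonempty interior (note that $\inter(A)$ is open and, by the identity principle applied to local defining functions of $M$ composed with $Df$, closed in the connected set $U$, hence empty or all of $U$), and even then the local zero set of a nonzero real-analytic function is only a \emph{countable union} of smooth hypersurfaces, not a single one, so you still need Lemma \ref{lemma-mityagin} together with a Baire category step as in Lemma \ref{lemma-hypersurface}; since smooth hypersurfaces need not be closed, that step should be run on closures. Second, and more substantively, for $d\ge3$ the inference ``the bivectors $\hat u\wedge\hat v$ span $\mathfrak{so}(d)$, hence the closed subgroup generated by the rotations $R(\hat u)R(\hat v)$ is $SO(d)$'' is invalid as stated: a single rotation through a rational multiple of $\pi$ generates a finite group even though its logarithm spans the corresponding line, so spanning of logarithms of individual generators does not control the Lie algebra of the generated closed group. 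You need to restrict to the dense set of pairs $(\hat u,\hat v)$ whose angle is an irrational multiple of $\pi$, so that each such generator already produces the full one-parameter subgroup $\exp(t\,\hat u\wedge\hat v)$, and only then take linear spans inside the Lie algebra --- this is exactly what the paper's set $U'$ and the subspaces $\mathfrak{L}(x,y)$ accomplish. Relatedly, the scalar and rotation parts of your generators are coupled; to ``produce $\R_+\cdot SO(d)$'' you should first extract pure dilations $\tfrac{\mu(x)}{\mu(y)}I$ from pairs $x,y$ collinear with the pole $q$ (these give an interval of ratios around $1$, hence all of $\R_+$ in the closed group) and then divide them out to obtain pure rotations.
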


A key role in the proof of Proposition \ref{prop-keyprop} is played by the following rigidity property on real-analytic functions of several variables. It is an analogue of the classical identity principle for real-analytic functions of several variables. 

\begin{lemma}\label{lemma-mityagin}[Claim 2 in \cite{Mityagin2020}]
    Let $U\subset \R^d$ be open and convex. If $f: U\to \R$ is a real-analytic function which is not identically zero, then the set 
    \begin{equation}
        \lbrace x \in U:\ f(x) = 0\rbrace
    \end{equation}
    is contained in a countable union of smooth hypersurfaces of $\R^d$. 
\end{lemma}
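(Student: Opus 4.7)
The plan is to stratify the zero set of $f$ by the smallest order of a nonvanishing partial derivative and apply the implicit function theorem on each stratum. The only nontrivial external input I would need is the multivariate identity principle for real-analytic functions on a connected open set: if every partial derivative of $f$ vanishes at a single point, then $f$ is identically zero. This follows from the convergence of the Taylor series of a real-analytic function on a neighborhood of each point, combined with the open-set identity principle, and applies here because $U$ is convex hence connected. Together with $f \not\equiv 0$, this ensures that at each $x \in U$ some partial derivative $\partial^\alpha f(x)$ is nonzero.

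I would then define, for each $x$ in the zero set $Z(f) := \{y \in U : f(y) = 0\}$, the quantity
\[
k(x) := \min\{|\alpha| : \partial^\alpha f(x) \neq 0\},
\]
which is at least $1$ by the preceding observation. This gives the partition $Z(f) = \bigcup_{k \geq 1} Z_k$ where $Z_k := \{x \in U : k(x) = k\}$. Given $x \in Z_k$, choose a multi-index $\alpha$ with $|\alpha| = k$ and $\partial^\alpha f(x) \neq 0$, and write $\alpha = \beta + e_i$ with $|\beta| = k-1$. Then $\partial^\beta f(x) = 0$ (since $|\beta| < k$) while $\partial_i(\partial^\beta f)(x) \neq 0$. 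Consequently
\[
Z_k \subseteq \bigcup_{|\beta| = k-1, \ 1 \leq i \leq d} A_{\beta, i},
\]
where $A_{\beta, i} := \{y \in U : \partial^\beta f(y) = 0 \text{ and } \partial_i(\partial^\beta f)(y) \neq 0\}$.

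For each fixed pair $(\beta, i)$, the implicit function theorem applies: around any $x \in A_{\beta, i}$ one finds an open neighborhood $V \subseteq U$ on which $\partial_i(\partial^\beta f)$ stays nonzero, and then $\{y \in V : \partial^\beta f(y) = 0\}$ is a real-analytic (hence smooth) hypersurface of $\R^d$ containing $A_{\beta, i} \cap V$. Since $\R^d$ is second-countable, one can cover $A_{\beta, i}$ by countably many such $V$, producing a countable collection of smooth hypersurfaces whose union contains $A_{\beta, i}$. Taking the union over all triples $(k, \beta, i)$, which form a countable index set, yields a countable family of smooth hypersurfaces covering $Z(f)$.

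I do not expect a serious obstacle: the argument is a standard stratification-plus-implicit-function-theorem pattern, and the single analytic input—the identity principle—reduces immediately to the pointwise vanishing form through the convergence of Taylor series. The convexity of $U$ is used only to guarantee connectedness so that the identity principle applies globally; a more elaborate induction on dimension is unnecessary because the decisive hypersurface condition is controlled by a single first-order derivative $\partial_i(\partial^\beta f)$ at a time.
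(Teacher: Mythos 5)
Your proof is correct; note that the paper itself gives no proof of this lemma, citing it as Claim 2 of \cite{Mityagin2020}, and your argument --- stratifying the zero set by the minimal order $k(x)$ of a nonvanishing partial derivative, writing $\alpha=\beta+e_i$ so that the implicit function theorem applies to $\partial^\beta f$, and invoking Lindel\"of to get countability --- is essentially the argument given in that reference. The one analytic input you use, that a real-analytic function on a connected open set with all derivatives vanishing at one point is identically zero, is correctly justified, so there is nothing to fix.
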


\begin{proof}[Proof of Proposition \ref{prop-keyprop}]
    Let $U$, $E$ and $f$ be as in the statement. Since $CO(d) \cong SO(d) \times \R$ via $r O \mapsto (O, \log r)$, it suffices to show that 
    \begin{equation}\label{eq-goal}
        \langle \lbrace (O_f(x) O_f(y)^{-1}, \log\lambda_f(x) -\log\lambda_f(y)):\ x,y\in E\rbrace\rangle = SO(d)\times \R.
    \end{equation}     
    Recall that here $O_f(x) = Df(x)\Vert Df(x)\Vert^{-1} \in SO(d)$ and $\lambda_f(x) = \Vert Df(x)\Vert \in \R$. 
    
    Denote by $\mathfrak{so}(d)$ the Lie algebra of $SO(d)$. The set $\mathfrak{so}(d)$ consists of skew-symmetric $d\times d$-matrices and the exponential map $\mathfrak{exp}: \mathfrak{so}(d)\to SO(d)$ is given by the converging series $\mathfrak{exp}(A) = \sum_{k=1}^\infty \frac{A^k}{k!}$. By \cite[Proposition 20.8]{Lee2013}, $\mathfrak{exp}$ is a smooth diffeomorphism in a neighbourhood $X$ of $0 \in \mathfrak{so}(d)$. Denote $Y = \mathfrak{exp}(X)$ and let $\mathfrak{log}: Y \to X$ denote the inverse of $\mathfrak{exp}|_X$. Using continuity of $f$, by replacing $U$ with a subset we may assume that 
    \begin{equation*}
        O_f(x) O_f(y)^{-1} \in Y, \qquad x,y\in U.
    \end{equation*}
    In particular, 
    \begin{equation*}
        \mathfrak{log}(O_f(x) O_f(y)^{-1}) \in X, \qquad x,y\in U.
    \end{equation*}
    
    We also denote the exponential map $\mathfrak{so}(d) \times\R\to SO(d)\times \R,\ (v,x)\mapsto (\mathfrak{exp}(v), x)$ by $\mathfrak{exp}$. Since $G:=\langle \lbrace (O_f(x) O_f(y)^{-1}, \log\lambda_f(x) -\log\lambda_f(y)):\ x,y\in E\rbrace\rangle$ is a closed subgroup of $SO(d)\times \R$, it is a Lie subgroup of $SO(d)\times \R$ by the closed subgroup theorem, and its Lie algebra $\mathfrak{g}$ is a linear subspace of $\mathfrak{so}(d)\times \R$. Moreover, by \cite[Proposition 20.9]{Lee2013} the exponential map $\mathfrak{g} \to G$ is just the restriction of $\mathfrak{exp}$ to $\mathfrak{g}$. Therefore 
    \begin{equation*}
        (\mathfrak{log}(O_f(x) O_f(y)^{-1}), \log\lambda_f(x) - \log\lambda_f(y)) \in \mathfrak{g}, \qquad x,y\in U.
    \end{equation*}
    The crucial consequence of this inclusion is that also all linear combinations of the vectors $(\mathfrak{log}(O_f(x) O_f(y)^{-1}), \log\lambda_f(x) - \log\lambda_f(y))$ belong to $\mathfrak{g}$, in particular, the images of these linear combinations under $\mathfrak{exp}$ belong to $G$.
    
    Since $SO(d)$ is compact and connected, the exponential map $\mathfrak{exp}: \mathfrak{so}(d)\to SO(d)$ is surjective (see \cite[Corollary 11.10]{Hall2015}). Therefore, in order to prove \eqref{eq-goal} it suffices to show that 
    \begin{equation*}
        {\rm span}\lbrace (\mathfrak{log}(O_f(x) O_f(y)^{-1}), \log\lambda_f(x) -\log\lambda_f(y)):\ x,y\in E\rbrace = \mathfrak{so}(d) \times \R.
    \end{equation*}
    For this, on the other hand, it suffices to show that if $L: \mathfrak{so}(d)\times \R\to\R$ is any linear functional which is not identically zero, then the function
    \begin{equation}\label{eq-nonzerofunction}
        E\times E \to \R,\qquad (x,y)\mapsto  L(\mathfrak{log}(O_f(x) O_f(y)^{-1}),  \log\lambda_f(x) -\log\lambda_f(y))
    \end{equation}
    is not identically zero. Fix now an arbitrary non-trivial linear functional $L: \mathfrak{so}(d) \times \R \to \R$ and consider the function
    \begin{equation}\label{eq-nonzerofunction2}
    F: U\times U \to \R,\qquad (x,y) \mapsto L(\mathfrak{log}(O_f(x) O_f(y)^{-1}),  \log\lambda_f(x) -\log\lambda_f(y)).
    \end{equation}
    We note that $F$ is real-analytic as a composition of the real-analytic maps 
    \begin{alignat*}{2}
    &U\times U\to \R^d,\ &&(x,y) \mapsto \log\lambda_f(x) -\log\lambda_f(y), \\
    &U\times U \to Y,\ &&(x,y) \mapsto O_f(x) O_f(y)^{-1}, \\
    &Y\mapsto X,\ &&A \mapsto \mathfrak{log}(A), \\
    &X\times \R\to \R,\ &&(v_1, v_2) \mapsto L(v_1, v_2).
    \end{alignat*}
    We claim next that $F$ is not identically zero. Since $L$ is non-trivial, this will follow if we manage to show that
    \begin{equation}\label{eq-secondgoal}
         \Lambda := {\rm span}\lbrace (\mathfrak{log}(O_f(x) O_f(y)^{-1}), \log\lambda_f(x) -\log\lambda_f(y)):\ x,y\in U\rbrace = \mathfrak{so}(d)\times \R.
    \end{equation}
    Note that here $x$ and $y$ are chosen from the open set $U$ so this equality does not yet imply our goal \eqref{eq-goal}.
    
    Suppose first that $d=2$. Then $\mathfrak{so}(2) = \R$, $\mathfrak{log}(x) = x$ and $f$ is a complex differentiable function with $O_f(x) = {\rm Arg}(f'(x))$, $\lambda_f(x) = |f'(x)|$ and
    \begin{equation*}
        \Lambda = {\rm span}\lbrace ({\rm Arg}(a) {\rm Arg}(b)^{-1}, \log |a| - \log |b|):\ a,b\in f'(U)\rbrace.
    \end{equation*}
    Since $f'(U)$ is open, by varying $a$ and $b$ first along a line through origin and then along a circle centered at the origin we find that $\Lambda = \R^2 = \mathfrak{so}(2)\times\R$. 

    Suppose then that $d\geq 3$. By Liouville's theorem, $f$ is a non-linear (orientation-preserving) Möbius transformation; In particular, its derivative has the form
        \begin{equation*}
            Df(x) = \Vert x - a\Vert^{-2} A R_{x-a}, \qquad x\in U,
        \end{equation*}
    where $a\in \R^d$, $A \in SO(d)$ and $R_{x-a}$ is the reflection with respect to the hyperplane perpendicular to $x-a$. Varying $x\in U$ in a line which contains $a$, the matrix $O_f(x) O_f(y)^{-1} = A R_{x-a} R_{y-a}^{-1} A^{-1}$ remains constant and we find that $(0,r)\in\Lambda$ for some $r>0$. Thus, in order to complete the proof of \eqref{eq-secondgoal} it remains to show that  
    \begin{equation}\label{eq-thirdgoal}
        {\rm span}\lbrace \mathfrak{log}(A R_{x-a} R_{y-a}^{-1} A^{-1}):\ x,y\in U\rbrace = \mathfrak{so}(d).
    \end{equation}
    Note that $A R_{x-a} R_{y-a}^{-1}A^{-1}$ is rotation in the plane spanned by $A(x-a)$ and $A(y-a)$ by twice the angle between $x-a$ and $y-a$. Denote $O(x,y) := A R_{x-a} R_{y-a}^{-1}A^{-1}$. For all $x,y$ in a dense subset $U'\subset U$, $O(x,y)$ is an irrational rotation, in particular, $\langle O(x,y) \rangle$ is a one-dimensional Lie subgroup of $SO(d)$. Let 
    \begin{equation}\label{eq-lxy}
    \mathfrak{L}(x,y) := \lbrace A \in \mathfrak{so}(d):\ \mathfrak{exp}(t A) \in \langle O(x,y)\rangle\ \text{for every}\ t\geq 0 \rbrace
    \end{equation}
    denote its Lie algebra which is a one-dimensional linear subspace of $\mathfrak{so}(d)$; see \cite[Proposition 20.9]{Lee2013}. Again, the exponential map $\mathfrak{L}(x,y)\to SO(d)$ is just $\mathfrak{exp}|_{\mathfrak{L}(x,y)}$.
    
    Now, the second exterior power of $\R^d$, $\Wedge^2 \R^d = \lbrace v\wedge w: v,w\in\R^d\rbrace$, is isomorphic to $\mathfrak{so}(d)$, the space of skew-symmetric linear maps $\R^d\to \R^d$, with isomorphism given by the map 
    \begin{equation*}
    \Wedge^2\R^d \to \mathfrak{so}(d),\ v\wedge w \mapsto (x\mapsto \langle v,x\rangle w + \langle w,x \rangle v).
    \end{equation*}
    Since the skew-symmetric map 
    \begin{equation*}
    A(x-a)\wedge A(y-a):\ u\mapsto \langle A(x-a), u\rangle A(y-a) + \langle A(y-a),u \rangle A(x-a)
    \end{equation*}
    preserves the plane spanned by $A(x-a)$ and $A(y-a)$ and maps its orthogonal complement to $0$, it follows that $\mathfrak{exp}(A(x-a)\wedge A(y-a))$ is a rotation in the plane spanned by $A(x-a)$ and $A(y-a)$. In particular,
    \begin{equation*}
    \mathfrak{exp}(t\cdot (A(x-a)\wedge A(y-a))) \in \langle A R_{x-a} R_{y-a}^{-1}A^{-1} \rangle, \qquad t\geq0
    \end{equation*}
    and so $A(x-a)\wedge A(y-a) \in \mathfrak{L}(x,y)$ by definition; recall \eqref{eq-lxy}. Since $\mathfrak{L}(x,y)$ is a one-dimensional linear subspace of $\Lambda^2\R^d$ and $\mathfrak{log}(A R_{x-a} R_{y-a}^{-1}A^{-1}) \in \mathfrak{L}(x,y)$, we have 
    \begin{equation*}
    \mathfrak{log}(A R_{x-a} R_{y-a}^{-1}A^{-1}) = \alpha(x,y) \cdot A(x-a)\wedge A(y-a)
    \end{equation*}
   for some $\alpha(x,y)\in\R\setminus\lbrace0\rbrace$. Since $U'$ spans $\R^d$, the set $\lbrace A(x-a)\wedge A(y-a):\ x,y\in U'\rbrace$ spans $\Wedge^2\R^d \cong \mathfrak{so}(d)$ which completes the proof of \eqref{eq-thirdgoal}.

    We may now conclude the proof of the proposition. Since the map $F$ from \eqref{eq-nonzerofunction2} is a non-trivial real-analytic function, Lemma \ref{lemma-mityagin} asserts that its zero set belongs to a countable union of smooth hypersurfaces of $\R^d\times \R^d$. Since $E$ is not locally contained in a hypersurface of $\R^d$ by assumption, Lemma \ref{lemma-hypersurface} asserts that $E\times E$ cannot be contained in the zero set of $F$, completing the proof.
\end{proof}

\begin{proposition}\label{prop-generationRd}
    Let $d\geq 3$. Let $(O_1,\ldots, O_n) \in SO(d)^n$, $(r_1, \ldots, r_n) \in \R^n$, and let $\Gamma = \langle \lbrace r_i O_i, r_i^{-1} O_i^{-1}:\ 1\leq i \leq n\rbrace\rangle \leq CO(d)$. Suppose that the following conditions hold:
    \begin{enumerate}
        \item $\langle \lbrace O_i, O_i^{-1}:\ 1\leq i\leq n\rbrace\rangle = SO(d)$, 
        \item $\langle \lbrace \log r_i, - \log r_i:\ 1\leq i\leq n\rbrace\rangle = \R$, and 
        \item $r I\in\Gamma$ for some $r>0$.
    \end{enumerate}
    Then $\Gamma = CO(d)$. 
\end{proposition}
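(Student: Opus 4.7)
The plan is to work in the identification $CO(d) \cong SO(d) \times \R$ given by $rO \mapsto (O, \log r)$, under which the group law becomes coordinate-wise (product in $SO(d)$, addition in $\R$). Then $\Gamma$ is a closed subgroup of $SO(d) \times \R$ generated by $(O_i, \log r_i)$ and their inverses, and the goal reduces to $\Gamma = SO(d) \times \R$. Write $\pi_1, \pi_2$ for the two projections. First, compactness of $SO(d)$ forces $\pi_2(\Gamma)$ to be closed in $\R$: lift a convergent sequence $t_n \in \pi_2(\Gamma)$ to $(g_n, t_n) \in \Gamma$, extract a convergent subsequence in the $SO(d)$-factor, and use closedness of $\Gamma$. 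Combined with condition (2), this upgrades $\pi_2(\Gamma)$ from dense to all of $\R$. Now set $H := \{t \in \R : (I, t) \in \Gamma\}$, a closed subgroup of $\R$. Condition (3) (interpreted with $r \neq 1$) gives $H \neq \{0\}$, so either $H = c\Z$ for some $c > 0$ or $H = \R$.

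In the case $H = \R$ the result follows quickly: $\{I\} \times \R \subseteq \Gamma$, so $\pi_1(\Gamma) \times \R \subseteq \Gamma$, and closedness of $\Gamma$ together with density of $\pi_1(\Gamma)$ in $SO(d)$ from condition (1) give $\Gamma = SO(d) \times \R$. The heart of the argument is to rule out $H = c\Z$ by contradiction. I would pass to the quotient $q: SO(d) \times \R \to SO(d) \times (\R/c\Z) \cong SO(d) \times S^1$. Since $\ker q = \{I\} \times c\Z \subseteq \Gamma$, the set $\bar\Gamma := q(\Gamma)$ is closed (because $\Gamma$ is saturated for $q$ with closed preimage) and hence compact as a closed subgroup of a compact group. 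Then $\pi_1(\bar\Gamma) = \pi_1(\Gamma)$ is both dense in $SO(d)$ and compact, so $\pi_1(\bar\Gamma) = SO(d)$; and $\ker(\pi_1|_{\bar\Gamma}) = (\{I\} \times H)/(\{I\} \times c\Z) = \{(I, \bar 0)\}$ precisely because $H = c\Z$. Thus $\pi_1|_{\bar\Gamma}$ is a continuous bijection between compact Hausdorff groups, hence a homeomorphism, and $\bar\Gamma$ is the graph of a continuous homomorphism $\phi: SO(d) \to S^1$.

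The decisive ingredient is the fact that $SO(d)$ is perfect for $d \geq 3$: it is simple and non-abelian for $d = 3$ and $d \geq 5$, while for $d = 4$ perfectness follows from the isogeny $\mathrm{Spin}(4) \cong SU(2) \times SU(2)$ and the perfection of $SU(2)$. Any homomorphism from a perfect group to an abelian group is trivial, so $\phi \equiv 0$; this forces $\pi_2(\bar\Gamma) = \{\bar 0\}$, contradicting $\pi_2(\bar\Gamma) = \R/c\Z = S^1$ obtained from $\pi_2(\Gamma) = \R$. Therefore $H = \R$, and the easy case yields $\Gamma = CO(d)$. The main obstacle is the hard case: $\pi_1(\Gamma)$ is only a priori dense in $SO(d)$, and converting this to usable surjectivity requires first passing to the compact quotient $SO(d) \times S^1$, where $\bar\Gamma$ becomes compact and the abelianization rigidity of $SO(d)$ can simultaneously be exploited to trivialize $\phi$.
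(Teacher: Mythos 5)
Your proof is correct and takes essentially the same route as the paper's: both reduce the statement to showing that the closed subgroup $L = \Gamma \cap (\lbrace I\rbrace\times\R)$, which is nontrivial by assumption (3), must equal all of $\R$, and both do so by producing a homomorphism from (a dense subgroup of) $SO(d)$ into the compact circle $\R/L$ and killing it using the fact that $SO(d)$ has no nontrivial abelian quotients when $d\geq 3$. The only differences are in execution: you obtain continuity of this homomorphism by passing to the compact quotient $SO(d)\times S^1$ and invoking the compact-bijection/graph argument, and you derive triviality from perfectness of $SO(d)$ as an abstract group, whereas the paper proves continuity at the identity directly (again via compactness of $\R/L$), extends by density, and derives triviality from semisimplicity of $\mathfrak{so}(d)$ at the Lie-algebra level.
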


\begin{proof}
    We view $\Gamma$ as a closed subgroup of $SO(d)\times \R$ through the identification $rO \sim (O, \log r)$. Let $L = \Gamma\cap (\lbrace {\rm Id}\rbrace \times \R)$ and note that by assumption (3), $L \neq \lbrace 0\rbrace$. In order to prove the proposition it suffices to show that $L = \R$. Since $L$ is a subgroup of $\R$, the function
    \begin{equation*}
        \psi: \pi_1(\Gamma) \to \R/L, \qquad \psi(O) = t + L
    \end{equation*}
    where $t\in \R$ is such that $(O,t)\in \Gamma$, is a well-defined group homomorphism. We claim that $\psi$ is continuous at the identity $I\in \pi_1(\Gamma)$. Suppose otherwise, that there exists a sequence $(O_i, t_i)_{i\in\N}\in \Gamma$ such that $\lim_{i\to\infty} O_i = I$ but the sequence $(\psi(O_i))_{i\in\N} = (t_i)_{i\in\N}$ does not converge to $0\in \R/L$. By passing onto a subsequence and using compactness of $\R/L$ (which follows from the fact that $L\neq \lbrace0\rbrace$), we may assume $\lim_{i\to\infty} t_i +L =\lim_{i\to\infty} \psi(O_i) = t+L \neq 0$. However, since $\lim_{i\to\infty} (O_i, t_i) = (I, t) \in \Gamma$, this contradicts the definition of $L$. In particular, $\psi$ is continuous at $I$, and as a group homomorphism it is continuous at every point in $\pi_1(\Gamma)$. Since $\pi_1(\Gamma)$ is dense in $SO(d)$ by the assumption (1), we may extend $\psi$ to a group homomorphism $\tilde{\psi}: SO(d) \to \tilde{\psi}(SO(d)) \leq \R/L$. Denote by $\mathfrak{L}$ the Lie algebra of $\tilde{\psi}(SO(d))$ which is a closed subgroup of $\R/L$. Then $\tilde{\psi}$ gives rise to a Lie algebra homomorphism $d\tilde{\psi}: \mathfrak{so}(d) \to \mathfrak{L}$. Now, $\mathfrak{so}(d)/\ker (d\tilde{\psi})$ is an ideal of $\mathfrak{so}(d)$ by \cite[Corollary 1.55]{Knapp1996}, and it is abelian since $\mathfrak{so}(d)/\ker(d\tilde{\psi})$ is isomorphic to the abelian Lie algebra $d\tilde{\psi}(\mathfrak{so}(d))$ by the first isomorphism theorem for Lie algebras. By semisimplicity of $\mathfrak{so}(d)$, $\mathfrak{so}(d)/\ker(d\tilde{\psi})$ therefore trivial, that is $\ker(d\tilde{\psi}) = \mathfrak{so}(d)$ and so $d\tilde{\psi} \equiv 0$. By e.g. \cite[Theorem 20.19]{Lee2013} this implies $\tilde{\psi}\equiv 0$.

    Recalling the definition of $\psi$, we find that 
    \begin{equation*}
        \Gamma = \pi_1(\Gamma) \times L.
    \end{equation*}
    Since $\pi_2(\Gamma) = L$ is dense in $\R$ by assumption (2) and $\Gamma$ is closed, this completes the proof.
\end{proof}

The only place where the assumption (3) was used in the proof was to deduce that $L\neq \lbrace 0\rbrace$ which in turn was required to ensure that $\R/L$ is compact. Without this assumption the continuity of $\psi$ at $I$ might fail, for example, if there exists a sequence $(O_i, t_i)\in\Gamma$ such that $\lim_{i\to\infty} O_i = I$ but $\lim_{i\to\infty} t_i = \infty$. However, we do not know whether (3) is necessary for the conclusion of Proposition \ref{prop-generationRd} to hold.

\section{Proofs of the main results}

The self-conformality of $\mu$ will be utilized in the proofs of Theorems \ref{thm-1} - \ref{thm-7} through the following proposition. Recall that for $f,g:\R^d\to \R^d$, $(f,g)$ denotes the map $(x,y)\mapsto (f(x), g(x))$.  

\begin{proposition}\label{prop-tangentdistributioninvariant}
    Let $\Phi$ be a conformal iterated function system on $\R^d$, let $K$ denote its attractor and let $\mu$ be a self-conformal measure associated to $\Phi$. Let $\nu$ be a Borel probability measure on $\R^d$. For $\mu\times \nu$-almost every $(x,y)$, the following holds: If $Q$ is any tangent distribution of $\mu\times \nu$ at $(x,y)$ with ergodic decomposition $Q = \int P\,dQ'(P)$, then for $Q'$-almost every $P$ there exists a conformal map $h: U \to \R^d$ and $t_0\geq 0$ such that for any $f\in \Phi$ and for $\mu$-almost every $x,y\in h(K)$,
    \begin{equation}
        S_t(Df_h(x), I) P = S_t(Df_h(y)^{-1}, I) P= P
    \end{equation}
    for every $t\geq t_0$, where $f_h := h\circ f\circ h^{-1}$. 
\end{proposition}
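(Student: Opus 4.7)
The plan is to adapt the strategy from the proof of \cite[Theorem 1.1]{BaranyKaenmakiPyoralaWu2023} to the product setting, using the scenery machinery recalled in Section 2. I fix a $\mu\times\nu$-typical $(x_0,y_0)$, a tangent distribution $Q\in TD(\mu\times\nu,(x_0,y_0))$, and decompose $Q=\int P\,dQ'(P)$ into ergodic fractal distributions via Theorem \ref{thm-hochmanrigidity}. A key initial observation is that, in the maximum metric, the scaling operation factors through the product: $(\mu\times\nu)_{(x,y),t}=\mu_{x,t}\times\nu_{y,t}$. Consequently $Q$, and hence $Q'$-a.e.\ ergodic component $P$, is supported on product measures $\eta\times\tau$. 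By Theorem \ref{thm-hochmanrigidity}(3), for $P$-a.e.\ $\pi=\eta\times\tau$, the scenery $\langle\pi\rangle_{(y,y'),T}$ converges to $P$ at $\pi$-a.e.\ $(y,y')$.

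Next I reduce the first factor $\eta$ to a self-conformal measure up to bounded distortion. Lemma \ref{lemma-tangentstructure} writes $\eta=(\gamma\cdot\mu)_{B(0,1)}$ for some $\gamma\in\mathcal P(\mathcal C_{\rm conf}((-1,1)^d))$. Since self-conformal measures are exact-dimensional and tangent-regular (as in the proof of Lemma \ref{lemma-tangentregular}), one has $\dim(\gamma\cdot\mu)=\dim\mu>0$, so Proposition \ref{prop-mainprop} applies. It yields, for $\gamma$-a.e.\ $h$ and $h\mu$-a.e.\ $y$, that the sceneries $(\gamma\cdot\mu)_{y,t}$ and $(h\mu)_{y,t}$ are asymptotically Lévy--Prokhorov close in the averaged sense; equivalently, the tangent distributions of $\gamma\cdot\mu$, and hence of $\eta$, at common typical points coincide with those of $h\mu$. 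I may therefore replace $\eta$ by $h\mu$ for the purpose of the scenery-level argument.

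Now I exploit the self-conformality of $h\mu$ with respect to the conjugated IFS $\Phi_h:=\{f_h:f\in\Phi\}$, where $f_h:=h\circ f\circ h^{-1}$. The defining relation $h\mu=\sum_i p_i(f_i)_h(h\mu)$ gives $f_h(h\mu)\ll h\mu$ for every $f\in\Phi$. Applying Proposition \ref{prop-imagedistribution} to the diffeomorphism $(f_h,\mathrm{id})$ and the product $(h\mu)\times\tau$, the scenery of $(f_h(h\mu))\times\tau$ at $(f_h(x),y')$ equals $S_{t_0}(Df_h(x),I)$ applied to the scenery of $(h\mu)\times\tau$ at $(x,y')$, for all sufficiently large $t_0$. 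Proposition \ref{prop-densitytheorem}, applied to $(f_h(h\mu))\times\tau\ll(h\mu)\times\tau$, then ensures that the sceneries of $(f_h(h\mu))\times\tau$ and of $(h\mu)\times\tau$ at $(f_h(x),y')$ share the same LP-limits. Since $(f_h(x),y')$ is a generic point of $(h\mu)\times\tau$ for $h\mu$-a.e.\ $x$ and $\tau$-a.e.\ $y'$, both sceneries generate $P$, and comparing with the preceding identity forces $S_{t_0}(Df_h(x),I)P=P$. The inverse-derivative version $S_t(Df_h(y)^{-1},I)P=P$ is obtained by running the same argument with $f_h^{-1}$ on $f_h(U)$ in place of $f_h$ and using $Df_h^{-1}(z)=Df_h(f_h^{-1}(z))^{-1}$.

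The main obstacle I anticipate is propagating the scenery-level equality, which is naturally a pointwise statement about typical $\pi$ and typical $(y,y')$, to a genuine equality between the ergodic fractal distributions $S_{t_0}(Df_h(x),I)P$ and $P$. This is ultimately handled by Theorem \ref{thm-hochmanrigidity}(3), which says that $P$ is determined by its $P$-a.e.\ scenery limits, so a $P$-almost sure agreement of generating sceneries forces equality of distributions. A secondary technical point is that Proposition \ref{prop-mainprop} produces only an averaged Lévy--Prokhorov closeness between the sceneries of $\gamma\cdot\mu$ and $h\mu$; one must verify that this averaged closeness is strong enough to equate accumulation points of the associated scenery processes along the subsequence realising $P$, which follows by a standard Cesàro--Egorov argument applied to the integrand in Proposition \ref{prop-mainprop}.
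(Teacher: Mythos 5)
Your proposal is correct and follows essentially the same route as the paper: it uses Lemma \ref{lemma-tangentstructure} to write the first marginal of a $P$-generating tangent measure as $(\gamma\cdot\mu)_{B(0,1)}$, Proposition \ref{prop-mainprop} together with a subsequence/Cesàro argument to replace it by $h\mu$ for $\gamma$-typical $h$, and then Propositions \ref{prop-imagedistribution} and \ref{prop-densitytheorem} applied to $(f_h,\mathrm{id})$ and its inverse to extract the invariance. The only cosmetic difference is that you apply Proposition \ref{prop-mainprop} to the first factor alone rather than to $\gamma'\cdot(\mu\times\nu')$ as the paper does, which is harmless since under the maximum metric the Lévy--Prokhorov distance between product measures with a common second factor is controlled by that of the first factors.
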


\begin{proof}
    The proof very closely follows that of \cite[Theorem 1.3]{BaranyKaenmakiPyoralaWu2023}. Let $Q$ be a tangent distribution of $\mu\times\nu$ at $(x,y)$. Since we are using the maximum metric in $\R^d\times\R^d$, the measures $(\mu\times\nu)_{(x,y),t}$ are product measures and so $Q$ is supported on product measures. Moreover, if $\pi_1,\pi_2:\R^d\times\R^d\to \R^d$ denote the orthogonal projections to the first and last $d$ coordinates, respectively, then $\pi_1 Q$ and $\pi_2 Q$ are tangent distributions of $\mu$ and $\nu$ at $x$ and $y$, respectively. Let $Q = \int P\,dQ'(P)$ denote the ergodic decomposition of $Q$ with respect to $(S_t)_{t\geq 0}$. Then by Theorem \ref{thm-hochmanrigidity} (3), every ergodic component $P$ is generated by some tangent measure of $\mu\times \nu$ at almost every point; Fix an ergodic component $P$ and let $\eta = \mu'\times\nu'$ be such a tangent measure.
    
   Applying Propositions \ref{prop-localdimensions} and \ref{lemma-tangentstructure}, we may choose $\eta = \mu'\times \nu'$ in such a way that 
   \begin{enumerate} 
   \item $\mu'$ and $\nu'$ are exact-dimensional, 
   \item $\mu'$ has the form $\mu' = (\gamma\cdot\mu)_{B(0,1)}$ for some $\gamma\in\mathcal P(\mathcal C_{\rm conf}((-1,1))^d)$, and \item $\dim \mu' = \dim\mu$. 
   \end{enumerate}
   Property (1) is ensured by Theorem \ref{thm-hochmanrigidity} (3) since $\pi_1 Q$ and $\pi_2 Q$ are tangent distributions of $\mu$ and $\nu$, respectively. Property (2) is exactly Proposition \ref{lemma-tangentstructure}. Property (3) follows from tangent-regularity of $\mu$.  
   
   Having chosen such an $\eta$, we have
    \begin{equation*}
        \eta = (\gamma\cdot\mu)_{B(0,1)} \times \nu' = \int (h\mu\times\nu')_{B(0,1)}\,d\gamma(h) = (\gamma'\cdot(\mu\times\nu'))_{B(0,1)},
    \end{equation*}
    when $\gamma' = \int \delta_{(h,I)} \,d\gamma(h) \in \mathcal P(\mathcal C_{\rm conf}((-1,1)^{2d}))$. Using exact-dimensionality of $\mu$ \cite{FengHu2009} and properties (1) and (3) of $\eta$, we find that $\dim \mu\times\nu' = \dim\mu + \dim\nu' = \dim\mu'\times\nu' = \dim \eta$. Therefore, we are in position to apply Proposition \ref{prop-mainprop} to find that
    \begin{equation*}
        \lim_{n\to\infty} \iint \frac{1}{n}\int_1^n d_{\rm LP}((\gamma'\cdot(\mu\times\nu'))_{(x,y), t}, (h\mu\times\nu'))_{(x,y),t})\,dt\,d(h\mu\times\nu')(x,y)\,d\gamma(h) = 0.
    \end{equation*}
    Since $L^1$-convergence implies pointwise convergence along a subsequence, we find a subsequence $(n_k)_{k\in\N}$ such that 
    \begin{equation*}
        \lim_{k\to\infty} \frac{1}{n_k}\int_1^{n_k} d_{\rm LP}((\gamma'\cdot(\mu\times\nu'))_{(x,y), t}, (g(\mu\times\nu'))_{(x,y),t})\,dt = 0
    \end{equation*}
    for $\gamma$-almost every $h$ and $h\mu\times\nu'$-almost every $(x,y)$. Recalling that $(\gamma'\cdot (\mu\times\nu'))_{B(0,1)} = \mu'\times\nu' = \eta$ which generates the ergodic fractal distribution $P$ almost everywhere, we find that for $\gamma$-almost every $h$,
    \begin{equation*}
        \langle h\mu \times \nu'\rangle_{(x,y), n_k} \to P \qquad\text{for almost every}\ (x,y)\in B(0,1).
    \end{equation*}
    Fix any such $h$. It follows from Proposition \ref{prop-imagedistribution} that for any $f\in\Phi$, $h\mu\times\nu'$-almost every $(x,y)\in B(0,1)$ and for any large enough $t\geq 0$,
    \begin{align}
        \langle h\circ f \circ h^{-1}(h\mu) \times \nu'\rangle_{(h(f(h^{-1}(x))), y), n_k} &\to S_t (D(h\circ f \circ h^{-1})(x), I)P, \label{eq-f}\\
        \langle h\circ f^{-1} \circ h^{-1}(h\mu) \times \nu'\rangle_{(h(f^{-1}(h^{-1}(x))), y), n_k} &\to S_t (D(h\circ f^{-1} \circ h^{-1})(x), I)P.\label{eq-f-1}
    \end{align}
    Using the self-conformality of $\mu$ we find that 
    \begin{equation*}
        h\circ f \circ h^{-1}(h\mu) \ll h\mu \quad {\rm and} \quad h\mu \ll h \circ f^{-1} \circ h^{-1}((h\mu)_{h(f(K))})
    \end{equation*}
   for every $f\in \Phi$. In particular, testing \eqref{eq-f} with $x \in h(K)$ and using Proposition \ref{prop-densitytheorem}, we find that 
   \begin{align*}
       P &= \lim_{k\to\infty} \langle h\mu\times\nu\rangle_{(h(f(h^{-1}(x))),y), n_k} \\
       &= \lim_{k\to\infty} \langle h\circ f \circ h^{-1}(h\mu)\times\nu\rangle_{(h(f(h^{-1}(x))),y), n_k} \\
       &=S_t (D(h\circ f \circ h^{-1})(x), I)P \\
       &= S_t(Df_h(x), I) P
   \end{align*}
   for large enough $t\geq 0$, where $f_h := h\circ f\circ h^{-1}$. Testing \eqref{eq-f-1} with $x = h(f(h^{-1}(z))) \in h(f(K))$ for any $z\in h(K)$, we find that
   \begin{align*}
       P &= \lim_{k\to\infty} \langle h\mu\times\nu\rangle_{(h(z),y), n_k} \\
       &= \lim_{k\to\infty} \langle h \circ f^{-1} \circ h^{-1}(h\mu)\times\nu\rangle_{(h(f^{-1}(h^{-1}(x))),y), n_k} \\
       &= S_t (D(h\circ f^{-1} \circ h^{-1})(x), I)P \\
       &= S_t (D(h\circ f^{-1} \circ h^{-1})(h(f(h^{-1}(z)))), I)P \\
       &= S_t(D f_h(z)^{-1}, I)P
   \end{align*}
   for large enough $t\geq 0$. This completes the proof. 
\end{proof}

We are now ready to prove the main results. Recall that $\pi:\R^d\times\R^d\to\R^d$ denotes the map $(x,y)\mapsto x+y$. In the following we suppose that $\dim\mu>0$ and $\dim\nu>0$ since otherwise the statements are trivial.

\begin{proof}[Proof of Theorem \ref{thm-1}]
    Let $Q$ be a tangent distribution of $\mu\times\nu$ at $(x,y)$ with ergodic decomposition $Q = \int P\,dQ'(P)$. Then $\pi_1 Q$ and $\pi_2 Q$ are tangent distributions of $\mu$ and $\nu$ at $x$ and $y$, so by Theorem \ref{thm-hochmanrigidity} (3) and the assumption of tangent-regularity of both $\mu$ and $\nu$, for $\mu\times\nu$-almost every $(x,y)$ and for $Q'$-almost every $P$ it holds that $\dim(\mu'\times\nu') = \dim \mu+\dim\nu$ for $P$-almost every $\mu'\times\nu'$. 
    
    We now apply Proposition \ref{prop-tangentdistributioninvariant} for $Q'$-almost every ergodic component $P$. Fix one such $P$. Let $h \in \mathcal C_{\rm conf}((-1,1)^d)$ be the conformal map given by Proposition \ref{prop-tangentdistributioninvariant}, and using the total non-linearity of $\mu$, choose $f \in \Phi$ such that the function $f_h := h \circ f \circ h^{-1} \in \mathcal C_{\rm conf}((-1,1)^d)$ is not affine. 
    
    Suppose first that $d\geq 2$. By the assumption, $\mu$ is not supported on a planar analytic curve, a hyperplane or a sphere, so by Lemmas \ref{lemma-hypersurface}, \ref{lemma-hypersurface_selfconformal} and \ref{lemma-selfconformalrigidity}, the measure $\mu_U$ is not supported on a hypersurface of $\R^d$ for any open set $U\subset \R^d$. Applying Proposition \ref{prop-keyprop} with $f_h$ in place of $f$ (if this map is orientation-reversing, we replace it by its second iterate), we find that $P$ is invariant under $S_t(L, I)$ for every $L$ in a dense subset of $CO(d)$, for some $t\geq0$. Therefore Proposition \ref{prop-FDprojections} asserts that $\dim \pi P = \min\lbrace d, \dim P\rbrace$. Finally, by Theorem \ref{thm-hochmanprojections}, since $P$ was an arbitrary ergodic component of an arbitrary tangent distribution of $\mu\times\nu$ at $(x,y)$, we obtain
    \begin{equation*}
        \dim(\mu*\nu) \geq \min\lbrace d, \dim P\rbrace = \min\lbrace d,\dim\mu+\dim\nu\rbrace.
    \end{equation*}
    Since the other direction of the inequality holds for any exact-dimensional Borel measures $\mu$ and $\nu$, this completes the proof. 

    Let us then cover the case $d=1$. By possibly replacing $f_h$ by its second iterate, we have $f_h' >0$. Then Proposition \ref{prop-tangentdistributioninvariant} says that for some $t\geq 0$,  
    \begin{equation}\label{eq-d1invariance}
        S_t(S_{\log f'(z)}, I) P = S_t(S_{-\log f'(z)}, I) P = P,\qquad z\in h(K).
    \end{equation}
    Since $f_h$ is not affine and real-analytic, there exists $z\in h(K)$ with $|f_h''(z)| >0$, in particular, $f_h'$ is strictly monotone in a neighbourhood $U$ of $z$. Since $\dim(h(K)\cap U)>0$, there exist $z_1, z_2\in h(K)\cap U$ such that $\frac{\log f'_h(z_1)}{\log f_h'(z_2)} \not\in \Q$. Combining this with \eqref{eq-d1invariance}, we find that 
    \begin{equation*}
        S_t(S_r, I)P = P,\qquad r>0.
    \end{equation*}
    Thus Proposition \ref{prop-FDprojections} asserts that $\dim \pi P = \min\lbrace 1,\dim P\rbrace$ and the proof concludes as in the case $d\geq2$. 
\end{proof}

\begin{proof}[Proof of Theorem \ref{thm-2}]
    If either $\mu$ or $\nu$ is not supported on an analytic curve, the statement follows from Theorem \ref{thm-1}. Suppose then that $\mu$ and $\nu$ are both supported on analytic curves which are not parallel lines. For $\mu\times\nu$-almost every $(x,y)$, if $\mu'$ and $\nu'$ are any tangent measures of $\mu$ and $\nu$ at $x$ and $y$, respectively, then $\mu'$ and $\nu'$ are supported on non-parallel lines. This follows from the identity principle of complex-analytic functions and the assumption $\dim\mu\dim\nu>0$. For any such $\mu'$ and $\nu'$, $\mu' * \nu'$ is an image of $\mu'\times\nu'$ under an affine map of rank $2$, in particular $\dim (\mu'*\nu') =\dim\mu'\times\nu'$. Therefore, for $\mu\times\nu$-almost every $(x,y)$ and for any tangent distribution $P$ at $(x,y)$, it holds that
    \begin{align*}
        \dim \pi P &= \int \dim \pi (\mu'\times\nu') \,dP(\mu'\times\nu')\\
        &= \int \dim(\mu'\times\nu') \,dP(\mu'\times\nu') \\
        &= \int (\dim \mu' + \dim\nu') \,dP(\mu'\times\nu') \\ 
        &=\dim \mu + \dim \nu.
    \end{align*}
    Here we also used Theorem \ref{thm-hochmanrigidity} (3) and tangent-regularity of $\mu$ and $\nu$. Since the point $(x,y)$ and the distribution $P$ were arbitrary, the statement follows from Theorem \ref{thm-hochmanprojections}.
\end{proof}

The proofs of Theorems \ref{thm-3} - \ref{thm-7} are minor variations of the proof of Theorem \ref{thm-1}.

\begin{proof}[Proof of Theorem \ref{thm-3}]
    As in the proof of Theorem \ref{thm-1}, let $P$ be any ergodic component of a tangent distribution of $\mu\times\nu$, and let $\mu'\times\nu'$ be a tangent measure of $\mu\times\nu$ which generates $P$ and has dimension $\dim\mu'\times\nu' = \dim\mu+\dim\nu$. Apply Proposition \ref{prop-tangentdistributioninvariant} to $P$ and let $h\in \mathcal C_{\rm conf}((-1,1)^d)$ be the obtained conformal map. Let $x_f\in K$ denote the unique fixed point of $f\in \Phi$. Then $D(h\circ f\circ h^{-1})(h(x_f)) = Df(x_f)$. For each $f\in \Phi$, approximating $x_f$ with a sequence $h(K) \ni x_n\to h(x_f)$ such that the conclusion of Proposition \ref{prop-tangentdistributioninvariant} holds for $x_n \in h(K)$, i.e. $S_t(D f_h(x_n), I)P = P$ for large enough $t$, we find that $P$ is invariant under a dense subset of
    \begin{equation*}
    \langle \lbrace S_t(\lambda(f)\lambda(g)^{-1}O(f)O(g)^{-1}, I):\ f,g\in \Phi\rbrace\rangle  = \lbrace (S_t L, I):\ L\in CO(d)\rbrace
    \end{equation*}
    for large enough $t$. We may now conclude the proof similarly as to the proof of Theorem \ref{thm-1}: Proposition \ref{prop-FDprojections} (the presence of an additional scaling $S_t$ here is irrelevant) asserts that $\dim \pi P = \min\lbrace d,\dim P\rbrace$, and since $P$ was an arbitrary ergodic component of an arbitrary tangent distribution, it follows from Theorem \ref{thm-hochmanprojections} that $\dim(\mu*\nu) \geq \min\lbrace d,\dim P\rbrace = \min\lbrace d,\dim\mu+\dim\nu\rbrace$. 
\end{proof}

\begin{proof}[Proof of Corollary \ref{thm-4}]
    Under the assumptions, it follows from Proposition \ref{prop-generationRd} that 
    \begin{equation*}
    \langle\lbrace \lambda(f)O(f), \lambda(f)^{-1}O(f)^{-1}:\ f\in\Phi\rbrace\rangle = CO(d).
    \end{equation*} 
    Thus the statement follows from Theorem \ref{thm-3}
\end{proof}

\begin{proof}[Proof of Corollary \ref{thm-5}]
    We will show that under the assumptions, 
    \begin{equation*}
    \langle\lbrace \lambda(f)O(f), \lambda(f)^{-1}O(f)^{-1}:\ f\in\Phi\rbrace \rangle =CO(2).
    \end{equation*}
    The statement will then follow from Theorem \ref{thm-3}.

    Denote $\Gamma := \langle\lbrace \lambda(f)O(f), \lambda(f)^{-1}O(f)^{-1}:\ f\in\Phi\rbrace \rangle$. Let $f_1,f_2,f_3,f_4$ be as in the assumption (2). Since $\Gamma$ contains the elements $\lambda(f_1)O(f_1)\lambda(f_2)^{-1}O(f_2)^{-1} = \lambda(f_1)\lambda(f_2)^{-1} I$ and $\lambda(f_3)\lambda(f_4)^{-1} I$, it follows from the assumption (2) that $\Gamma$ in fact contains $r I$ for every $r>0$. Combining this with the assumption (1), we may conclude that $\Gamma = CO(2)$ which completes the proof.  
\end{proof}

\begin{proof}[Proof of Corollary \ref{thm-6}] 
    Let $P$ be an ergodic component of a tangent distribution of $\mu\times\nu$ at almost any $(x,y)$. Since both $\mu$ and $\nu$ are tangent-regular we have $\dim P = \dim\mu+\dim\nu$. Applying Proposition \ref{prop-tangentdistributioninvariant} twice for $P$, we obtain functions $h_1,h_2\in \mathcal C_{\rm conf}((-1,1)^d)$ such that if $f_h := h\circ f\circ h^{-1}$, then for $h_1 \mu \times h_2\nu$-almost every $(x,y)\in B(0,1)$ and every large enough $t\geq 0$,
    \begin{align*}
        &S_t (Df_{h_1}(x), I) P = S_t(Df_{h_1}(x)^{-1}, I) P = P,\\
        & S_t(I, Dg_{h_2}(y)) P = S_t (I, Dg_{h_2}(y)^{-1}) P = P
    \end{align*}
    for $f\in\Phi$, $g\in\Psi$. Since $\lambda(h\circ f\circ h^{-1}) = \lambda(f)$ and $O(h\circ f\circ h^{-1}) = O(f)$, similarly as in the proof of Corollary \ref{thm-3} we find that $P$ is invariant under
    \begin{align*}
    \langle \lbrace (\lambda(f)& O(f), I),\ (\lambda(f)^{-1} O(f)^{-1} , I), \\
    &(I, \lambda(g) O(g)),\ (I, \lambda(g)^{-1} O(g)^{-1}):\ f\in\Phi, g\in\Psi\rbrace\rangle.
    \end{align*}
    In particular, if 
    \begin{align*}
    \Gamma_\Phi &:= \langle \lbrace \lambda(f) O(f), \lambda(f)^{-1} O(f)^{-1}:\ f\in \Phi\rbrace\rangle, \\
    \Gamma_\Psi &:=\langle \lbrace \lambda(g) O(g), \lambda(g)^{-1} O(g)^{-1}:\ g\in \Psi\rbrace\rangle,
    \end{align*}
    then 
    \begin{align}\label{eq-integral}
        \dim \pi P &= \int \dim \pi(\eta\times\gamma)\,dP(\eta\times\gamma) \nonumber\\
        &= \int \dim (t O \eta *  s R\gamma)\,dP(\eta\times\gamma) \nonumber\\
        &= \int \dim (t s^{-1} OR^{-1} \eta *\gamma) \,dP(\eta\times\gamma)
    \end{align}
    for every $t O \in \Gamma_\Phi$, $sR \in \Gamma_\Psi$. The set
    \begin{align*}
    \Gamma &:= \lbrace ts^{-1} OR^{-1}:\ t O \in\Gamma_\Phi, sR\in \Gamma_\Psi\rbrace \\
    &= \Gamma_\Phi\Gamma_\Psi.
    \end{align*}
    satisfies the assumptions of Proposition \ref{prop-generationRd} by the assumptions (1)-(3) in the statement of the corollary, so it follows from Proposition \ref{prop-generationRd} that $\Gamma = CO(d)$. Therefore we may integrate \eqref{eq-integral} over all $ts^{-1} \geq 0$ and $OR^{-1} \in SO(d)$ to obtain, similarly as in the proof of Proposition \ref{prop-FDprojections}, that $\dim \pi P = \min\lbrace d,\dim P\rbrace$. Since $P$ was an arbitrary ergodic component of an arbitrary tangent distribution, it follows from Theorem \ref{thm-hochmanprojections} that $\dim (\mu*\nu) \geq \min\lbrace d, \dim P \rbrace = \min\lbrace d,\dim\mu+\dim\nu\rbrace$. 
\end{proof}

\begin{proof}[Proof of Corollary \ref{thm-7}]
    The proof proceeds exactly as the proof of Corollary \ref{thm-6} until up to the point where it is shown that $\Gamma = CO(d)$ is defined, so we only explain the simple modifications needed to show this equality. 
    
    Let $f_1, f_2, f_3, f_4$ be as in the assumption. Then the set
    \begin{equation*}
    \Gamma = \Gamma_\Phi\Gamma_\Psi.
    \end{equation*}
    contains the elements $\lambda(f_1) \lambda(f_2)^{-1} I$ and $\lambda(f_3)\lambda(f_4)^{-1}I$. Since $\Gamma$ is closed, it follows from the assumption (2) that $rI \in \Gamma $ for every $r>0$. The assumption (1) then implies that $\Gamma = CO(2)$, and the proof concludes exactly as before.
\end{proof}

\bibliographystyle{abbrv}
\bibliography{Bibliography}

\begin{thebibliography}{10}

\bibitem{AlgomChangWuWu2025}
A.~Algom, Y.~Chang, M.~Wu, and Y.-L. Wu.
\newblock Van der {C}orput and metric theorems for geometric progressions for
  self-similar measures.
\newblock {\em Math. Ann.}, 393(1):183--214, 2025.

\bibitem{AlgomKhalil2025}
A.~Algom and O.~Khalil.
\newblock ${L}^2$-flattening of self-similar measures on non-degenerate curves.
\newblock 2025.
\newblock Preprint, available at arXiv:2507.07321.

\bibitem{AlgomOrponen2025}
A.~Algom and T.~Orponen.
\newblock Uniformly perfect measures on strictly convex planar graphs are
  ${L}^2$-flattening.
\newblock 2025.
\newblock Preprint, available at arXiv:2509.09354.

\bibitem{AlgomRHWang2023}
A.~Algom, F.~Rodriguez~Hertz, and Z.~Wang.
\newblock Polynomial {F}ourier decay and a cocycle version of {D}olgopyat’s
  method for self-conformal measures.
\newblock 2023.
\newblock Preprint, available at arXiv:2306.01275.

\bibitem{AlgomRHWang2024}
A.~Algom, F.~Rodriguez~Hertz, and Z.~Wang.
\newblock Spectral gaps and {F}ourier decay for self-conformal measures in the
  plane.
\newblock 2024.
\newblock To appear in Transactions of the AMS, available at arXiv:2407.11688.

\bibitem{BakerBanaji2025}
S.~Baker and A.~Banaji.
\newblock Polynomial {F}ourier decay for fractal measures and their
  pushforwards.
\newblock {\em Math. Ann.}, 392(1):209--261, 2025.

\bibitem{BakerSahlsten2023}
S.~Baker and T.~Sahlsten.
\newblock Spectral gaps and {F}ourier dimension for self-conformal sets with
  overlaps.
\newblock 2023.
\newblock Preprint, available at arXiv:2306.01389.

\bibitem{BruceJin2019}
C.~Bruce and X.~Jin.
\newblock Projections of {G}ibbs measures on self-conformal sets.
\newblock {\em Nonlinearity}, 32(2):603--621, 2019.

\bibitem{BaranyKaenmakiPyoralaWu2023}
B.~Bárány, A.~Käenmäki, A.~Pyörälä, and M.~Wu.
\newblock Scaling limits of self-conformal measures.
\newblock 2023.
\newblock Preprint, available at arXiv:2308.11399.

\bibitem{MoreiraZamudio2024}
C.~G.~T. de~A.~Moreira and A.~M. Zamudio~Espinosa.
\newblock Scale recurrence lemma and dimension formula for {C}antor sets in the
  complex plane.
\newblock {\em Ergodic Theory Dynam. Systems}, 44(11):3316--3357, 2024.

\bibitem{FengHu2009}
D.-J. Feng and H.~Hu.
\newblock Dimension theory of iterated function systems.
\newblock {\em Comm. Pure Appl. Math.}, 62(11):1435--1500, 2009.

\bibitem{FraserPollicott2015}
J.~M. Fraser and M.~Pollicott.
\newblock Micromeasure distributions and applications for conformally generated
  fractals.
\newblock {\em Math. Proc. Cambridge Philos. Soc.}, 159(3):547--566, 2015.

\bibitem{Hall2015}
B.~Hall.
\newblock {\em Lie groups, {L}ie algebras, and representations}, volume 222 of
  {\em Graduate Texts in Mathematics}.
\newblock Springer, Cham, second edition, 2015.
\newblock An elementary introduction.

\bibitem{Hochmanpreprint}
M.~Hochman.
\newblock Dynamics on fractals and fractal distributions.
\newblock Preprint, available at arXiv:1008.3731, 2010.

\bibitem{HochmanShmerkin2012}
M.~Hochman and P.~Shmerkin.
\newblock Local entropy averages and projections of fractal measures.
\newblock {\em Ann. of Math. (2)}, 175(3):1001--1059, 2012.

\bibitem{Kaenmaki2003}
A.~K\"aenm\"aki.
\newblock On the geometric structure of the limit set of conformal iterated
  function systems.
\newblock {\em Publ. Mat.}, 47(1):133--141, 2003.

\bibitem{KaenmakiSahlstenShmerkin2015GMT}
A.~K\"{a}enm\"{a}ki, T.~Sahlsten, and P.~Shmerkin.
\newblock Dynamics of the scenery flow and geometry of measures.
\newblock {\em Proc. Lond. Math. Soc. (3)}, 110(5):1248--1280, 2015.

\bibitem{Knapp1996}
A.~W. Knapp.
\newblock {\em Lie groups beyond an introduction}, volume 140 of {\em Progress
  in Mathematics}.
\newblock Birkh\"auser Boston, Inc., Boston, MA, 1996.

\bibitem{Lee2013}
J.~M. Lee.
\newblock {\em Introduction to smooth manifolds}, volume 218 of {\em Graduate
  Texts in Mathematics}.
\newblock Springer, New York, second edition, 2013.

\bibitem{LopezMoreira2015}
J.~E. L\'opez and C.~G. Moreira.
\newblock A generalization of {M}arstrand's theorem for projections of
  cartesian products.
\newblock {\em Ann. Inst. H. Poincar\'e{} C Anal. Non Lin\'eaire},
  32(4):833--840, 2015.

\bibitem{Mattila1995}
P.~Mattila.
\newblock {\em Geometry of sets and measures in {E}uclidean spaces}, volume~44
  of {\em Cambridge Studies in Advanced Mathematics}.
\newblock Cambridge University Press, Cambridge, 1995.
\newblock Fractals and rectifiability.

\bibitem{Mityagin2020}
B.~S. Mityagin.
\newblock The zero set of a real analytic function.
\newblock {\em Mat. Zametki}, 107(3):473--475, 2020.

\bibitem{PeresRamsSimonSolomyak2001}
Y.~Peres, M.~Rams, K.~Simon, and B.~Solomyak.
\newblock Equivalence of positive {H}ausdorff measure and the open set
  condition for self-conformal sets.
\newblock {\em Proc. Amer. Math. Soc.}, 129(9):2689--2699, 2001.

\bibitem{Pyorala2024}
A.~Py\"or\"al\"a.
\newblock Resonance between planar self-affine measures.
\newblock {\em Adv. Math.}, 451:Paper No. 109770, 56, 2024.

\bibitem{RossiShmerkin2020}
E.~Rossi and P.~Shmerkin.
\newblock On measures that improve {$L^q$} dimension under convolution.
\newblock {\em Rev. Mat. Iberoam.}, 36(7):2217--2236, 2020.

\end{thebibliography}
\end{document}